\theoremstyle{plain}
\newtheorem{theorem}{Theorem}
\newtheorem{lemma}[theorem]{Lemma}
\newtheorem{corollary}[theorem]{Corollary}
\newtheorem*{case1}{Case 1}
\newtheorem*{case2}{Case 2}
\newtheorem*{case3}{Case 3}
\newtheorem*{claim}{Claim}
\theoremstyle{definition}
\newtheorem{definition}[theorem]{Definition}
\newtheorem{example}[theorem]{Example}
\theoremstyle{remark}
\newtheorem*{remark}{Remark}
\DeclareMathOperator{\HOD}{HOD}
\DeclareMathOperator{\OD}{OD}
\DeclareMathOperator{\ZFC}{ZFC}
\DeclareMathOperator{\cof}{cof}
\DeclareMathOperator{\cf}{cf}
\DeclareMathOperator{\crit}{crit}
\begin{document}

\title{The HOD Dichotomy}

\author{W. Hugh Woodin}
\author{Jacob Davis}
\author{Daniel Rodr\'{i}guez}

\maketitle

\section{Introduction} \label{intro}

This paper provides a more accessible account of some of the material from Woodin
\cite{Woodin} and \cite{Woodin2}.  All unattributed results are due to the first author.

Recall that $0^\#$ is a certain set of natural numbers that codes
an elementary embedding $j : L \to L$ such that $j \not= \mathrm{id} \upharpoonright L$.
Jensen's covering lemma says that if $0^\#$ does not exist and $A$ is an uncountable set
of ordinals, then there exists $B \in L$ such that $A \subseteq B$ and $|A| = |B|$.
The conclusion implies that if $\gamma$ is a singular cardinal,
then it is a singular cardinal in $L$.
It also implies that if $\gamma \ge \omega_2$ and $\gamma$ is a successor cardinal in $L$,
then $\mathrm{cf}(\gamma) = |\gamma|$.
In particular,
if $\beta$ is a singular cardinal,
then $(\beta^+)^L = \beta^+$.
Intuitively, this says that $L$ is close to $V$.
On the other hand,
should $0^\#$ exist,
if $\gamma$ is an
uncountable cardinal,
then $\gamma$ is an inaccessible cardinal in $L$.
In this case,
we could say that
$L$ is far from $V$.
Thus,
the covering lemma has the following corollary,
which does not mention $0^\#$.

\begin{theorem}[Jensen]
Exactly one of the following holds.
\begin{enumerate}
\item $L$ is correct about singular cardinals
and computes their successors correctly.
\item Every uncountable cardinal is inaccessible in $L$.
\end{enumerate}
\end{theorem}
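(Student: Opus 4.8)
The plan is to read off the dichotomy from the two facts recalled just above the statement: Jensen's covering lemma, which is available precisely when $0^\#$ does not exist, and the complementary fact that if $0^\#$ does exist then every uncountable cardinal is inaccessible in $L$. The logical skeleton is an appeal to excluded middle on the sentence ``$0^\#$ exists,'' supplemented by a short argument that clauses (1) and (2) cannot hold simultaneously, which is what upgrades ``at least one'' to ``exactly one.''

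First I would establish that at least one of (1) and (2) holds. If $0^\#$ does not exist, then the covering lemma applies, and the consequences already extracted from it in the discussion above — that a singular cardinal remains singular in $L$, and that $(\beta^+)^L = \beta^+$ whenever $\beta$ is singular — are exactly the two assertions packaged into clause (1), namely that $L$ is correct about singular cardinals and computes their successors correctly. If instead $0^\#$ does exist, then by the fact recalled in the introduction every uncountable cardinal is inaccessible in $L$, which is clause (2) verbatim. Since $0^\#$ either exists or does not, one of the two clauses must hold.

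Next I would verify that the two clauses are mutually exclusive. Suppose toward a contradiction that (1) and (2) both hold, and fix a concrete singular cardinal, say $\beta = \aleph_\omega$. By (1), $L$ is correct about singular cardinals, so $\beta$ is singular in $L$; in particular $\beta$ is not regular in $L$. On the other hand $\beta$ is uncountable, so by (2) it is inaccessible in $L$, and hence regular in $L$. This contradiction shows that (1) and (2) cannot both hold, so at most one of them does.

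Combining the two steps yields that exactly one of (1), (2) holds. The bulk of the mathematical weight — the covering lemma itself and the analysis of $L$ under the existence of $0^\#$ — is imported from the facts recalled before the statement, so I do not expect a genuine obstacle in the argument proper. The one point requiring care is the identification in the first step: one must check that the listed consequences of covering match clause (1) as stated, rather than a weaker or stronger assertion. Once that bookkeeping is confirmed, the dichotomy follows from pure logic together with the elementary observation that a singular cardinal can never be inaccessible.
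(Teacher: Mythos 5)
Your proposal is correct and matches the paper's own (informal) derivation: the paper obtains the dichotomy by exactly this case split on whether $0^\#$ exists, citing the covering lemma's consequences for clause (1) and the inaccessibility-in-$L$ fact for clause (2), with mutual exclusivity implicit since a singular cardinal such as $\aleph_\omega$ cannot be both singular and inaccessible in $L$. Your explicit treatment of exclusivity is a small but welcome addition to what the paper leaves tacit.
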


Imagine an alternative history in which this $L$ dichotomy was discovered
without knowledge of $0^\#$ or more powerful large cardinals.
Clearly, (1) is consistent because it holds in $L$.
On the other hand, 
whether or not there is a proper class of inaccessible cardinals in $L$
is absolute to generic extensions.
This incomplete evidence might have led set theorists to conjecture that (2) fails.
Of course,
(2) only holds when $0^\#$ exists but
$0^\#$ does not belong to $L$ and $0^\#$ cannot be added by forcing.

Canonical inner models other than $L$ have been defined and shown to satisfy 
similar covering properties and corresponding dichotomies.
Part of what makes them canonical is that they are contained in HOD.
In these notes, we will prove a dichotomy theorem of this kind for HOD itself.
Towards the formal statement,
recall that a cardinal $\delta$ is {\em extendible} iff for every $\eta > \delta$,
there exists $\theta > \eta$ and an elementary embedding $j : V_{\eta + 1} \to V_{\theta +1}$
such that $\mathrm{crit}(j) = \delta$ and $j(\delta) > \eta$.
The following result expresses the idea
that either HOD is close to $V$ or else HOD is far from $V$.
We will refer to it as the HOD Dichotomy.

\begin{theorem}\label{hoddichotomy}
Assume that $\delta$ is an extendible cardinal.
Then exactly one of the following holds.
\begin{enumerate}
\item For every singular cardinal $\gamma > \delta$,
$\gamma$ is singular in $\HOD$ and\\
$(\gamma^+)^{\HOD} = \gamma^+$.
\item Every regular cardinal greater than $\delta$ is measurable in $\mathrm{HOD}$.
\end{enumerate}
\end{theorem}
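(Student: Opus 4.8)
The plan is to prove that the two clauses are mutually exclusive and then that their disjunction is forced by a cleaner combinatorial dichotomy. Mutual exclusivity is the easy half: fix any singular cardinal $\gamma > \delta$ (say of cofinality $\omega$), which certainly exists above $\delta$. If (1) holds then $(\gamma^+)^{\HOD} = \gamma^+$, so $\gamma^+$ is a successor cardinal of $\HOD$; but $\gamma^+$ is regular and exceeds $\delta$, so if (2) also held it would be measurable in $\HOD$, hence a limit cardinal there, a contradiction. Thus at most one clause can hold, and the whole content of the theorem is that at least one does.

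For the substantive direction I would route everything through the notion of a regular cardinal $\kappa > \delta$ being \emph{$\omega$-strongly measurable in $\HOD$}: there is $\lambda < \kappa$ with $(2^\lambda)^{\HOD} < \kappa$ such that $\HOD$ contains no partition of $S^\kappa_\omega = \{\xi<\kappa : \cf(\xi)=\omega\}$ into $\lambda$ many pairwise disjoint stationary pieces. The first lemma I would prove is that $\omega$-strong measurability of $\kappa$ implies $\kappa$ is measurable in $\HOD$: the witnessing failure of stationary partitions, together with $(2^\lambda)^{\HOD}<\kappa$, forces the club filter on $S^\kappa_\omega$ restricted to $\HOD$-sets to be an ultrafilter inside $\HOD$, and hence to generate a normal, $\kappa$-complete measure there. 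This hands me clause (2) on the branch where \emph{every} regular $\kappa > \delta$ is $\omega$-strongly measurable in $\HOD$.

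It remains to treat the complementary branch, where some regular $\kappa_0 > \delta$ fails to be $\omega$-strongly measurable in $\HOD$, and here I would aim for the key structural lemma: under extendibility of $\delta$, this single failure implies that $\HOD$ is a weak extender model for ``$\delta$ is supercompact,'' i.e.\ for every $\gamma > \delta$ there is a normal fine $\delta$-complete ultrafilter $U$ on $\mathcal{P}_\delta(\gamma)$ with $\mathcal{P}_\delta(\gamma)\cap\HOD \in U$ and $U \cap \HOD \in \HOD$. The measure $U$ would be read off from an extendibility embedding $j : V_{\eta+1}\to V_{\theta+1}$ with $\crit(j)=\delta$, $j(\delta)>\eta$, and $\eta$ chosen well above $\gamma$ and $\kappa_0$, via $U = \{X : j[\gamma]\in j(X)\}$. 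By elementarity and the definability of $\HOD$, both defining conditions of a weak extender model reduce to controlling the image $j[\gamma]$ relative to the $\HOD$ of the target $V_\theta$: concretely, $\mathcal{P}_\delta(\gamma)\cap\HOD \in U$ amounts to $j[\gamma]\in\HOD^{V_\theta}$, after which the closure $U\cap\HOD\in\HOD$ follows once $j[\gamma]$ is seen to be ordinal-definable there. This is where the failure of $\omega$-strong measurability is spent: it must supply enough $\HOD$-definable stationary-set combinatorics on $S^{\kappa_0}_\omega$ to pin $j[\gamma]$ inside the target model's $\HOD$. I expect this step, converting a single local combinatorial failure into the global supercompactness of $\delta$ over $\HOD$ uniformly in $\gamma$, to be the main obstacle, both because it leans on the reflection peculiar to extendible (rather than merely supercompact) cardinals and because it must be arranged coherently for all $\gamma$ at once.

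Finally I would invoke the covering behaviour of weak extender models: if $\HOD$ is a weak extender model for the supercompactness of $\delta$, then for every singular cardinal $\gamma > \delta$ the supercompactness measures concentrate on $\HOD$ tightly enough to keep $\gamma$ singular in $\HOD$ and to force $(\gamma^+)^{\HOD} = \gamma^+$, which is precisely clause (1). Combining the mutual exclusivity from the first paragraph with the obviously exhaustive split (every regular cardinal above $\delta$ is $\omega$-strongly measurable in $\HOD$, or not), the first branch yielding (2) and the second yielding (1), gives that exactly one of (1), (2) holds.
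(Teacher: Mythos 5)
Your overall architecture matches the paper's: mutual exclusivity via $\gamma^+$ for $\gamma>\delta$ singular; the lemma that $\omega$-strong measurability in $\HOD$ yields measurability in $\HOD$ via the club filter (Lemma \ref{omega}); the passage through ``$\HOD$ is a weak extender model for $\delta$ supercompact''; and the covering consequences of weak extender models (Lemma \ref{wemclose}, Corollary \ref{wemwc}) to obtain clause (1). The genuine gap is exactly the step you flag as ``the main obstacle'': you assert that a \emph{single} regular $\kappa_0>\delta$ failing to be $\omega$-strongly measurable in $\HOD$ already forces $\HOD$ to be a weak extender model for $\delta$ supercompact, uniformly in $\gamma$. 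The mechanism you describe cannot deliver this. To pin down $j[\gamma]$ inside $\HOD^{V_\theta}$ you need a partition of $\cof(\omega)\cap\kappa_0$ into $\gamma$ many stationary pieces lying in $\HOD$, and the failure of $\omega$-strong measurability at $\kappa_0$ only guarantees such partitions into $\lambda$ many pieces for $\lambda<\kappa_0$ with $(2^\lambda)^{\HOD}<\kappa_0$. So a single $\kappa_0$ only controls $j[\gamma]$ (hence only produces suitable measures on $\mathcal{P}_\delta(\zeta)$) for $\zeta$ with roughly $(2^{2^\zeta})^{\HOD}<\kappa_0$; for larger $\zeta$ it gives nothing. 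This is precisely why the paper does \emph{not} prove the theorem as stated: it proves only the weakened dichotomy (Corollary \ref{final}), whose clause (2) reads ``all sufficiently large regular cardinals are measurable in $\HOD$,'' so that its negation supplies \emph{arbitrarily large} non-$\omega$-strongly-measurable regulars --- one above $2^\gamma$ for each target $\zeta$ with $\gamma>2^\zeta$ --- which is what the construction in Theorem \ref{equivalences} actually consumes. The full statement with ``every regular cardinal greater than $\delta$'' is deferred to Woodin's Theorem 212 and requires a further propagation argument that your sketch does not supply.

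A secondary, smaller point: in the branch where every regular $\kappa>\delta$ is $\omega$-strongly measurable, your argument is complete, but note that the two branches of your split (``all $\omega$-strongly measurable'' versus ``at least one not'') do not line up with the negations of (1) and (2) of the theorem until the hard implication above is in place; as written, the case ``some regular $>\delta$ is not measurable in $\HOD$ but all are $\omega$-strongly measurable'' cannot occur by Lemma \ref{omega}, so the split is fine, and the only missing content is the implication from a single failure to the weak extender model property. Either prove the theorem in the weakened form the paper actually establishes, or cite the stronger propagation result; as it stands the central claim is unproved.
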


In this note, we shall prove a dichotomy in which (2) is weakened to
hold for all sufficiently large regular cardinals greater than $\delta$;
see Corollary \ref{final}.   The full result can be found in \cite{Woodin} Theorem 212.

Notice that we have stated the HOD dichotomy
without deriving it from a covering property that
involves a ``large cardinal missing from HOD''.
In other words, no analogue of $0^\#$ is mentioned
and the alternative history we described for $L$
is what has actually happened in the case of $\HOD$.
This leads us to  conjecture that (2) fails.
One reason is that (2) is absolute between $V$ and its generic extensions
by posets that belong to $V_\delta$, which we will show this in the next section.
There is some evidence for this conjecture. All known large cardinal axioms (which do not contradict the Axiom of Choice) are compatible  with $V=\HOD$ and so trivially cannot imply (2). Further, we shall see that the main technique for obtaining independence in set theory (forcing) probably cannot be used to show that (2) is relatively consistent with the existence of an extendible cardinal starting from any know large cardinal hypothesis which is also consistent with the Axiom of Choice.  Finally, by definition $\HOD$ contains all definable sets of ordinals and this makes it difficult to imagine a meaningful analogue of $0^{\#}$ for $\HOD$.

Besides evidence in favor of this conjecture about HOD,
we also have applications.  Recall that Kunen proved in ZFC that there is no
non-trivial elementary embedding from $V$ to itself.  It is a longstanding
open question whether this is a theorem of ZF alone. One of our applications
is progress on this problem.  This and other applications will be listed in
Section \ref{consequences}.

\section{Generic absoluteness} \label{absolute}

In this section,
we establish some basic properties of forcing and HOD,
and use them to show that the conjecture about HOD from the previous section
is absolute to generic extensions.
In other words,
if $\mathbb{P}$ is a poset,
then clause (2) of Theorem \ref{hoddichotomy}
holds in $V$ iff it holds in every generic extension by $\mathbb{P}$.

First observe that if $\mathbb{P}$ is a weakly homogeneous (see \cite{Jech} Theorem 26.12) and ordinal definable poset in $V$,
and $G$ is a $V$-generic filter on $\mathbb{P}$,
then $\mathrm{HOD}^{V[G]} \subseteq \mathrm{HOD}^V$.
This is immediate from the basic fact about weakly homogeneous forcing
that for all $x_1 , \dots , x_n \in V$ and formula $\varphi (v_1, \dots , v_n)$,
every condition in $\mathbb{P}$ decides $\varphi(\check{x}_1, \dots, \check{x}_n)$
the same way. We also use here that a class model of ZFC can be identified solely from its sets of ordinals, since each level of its $V$ hierarchy can, using the Axiom of Choice, be encoded by a relation on $|V_\alpha|$ and then recovered by collapsing. We shall use this fact repeatedly.

Let us pause to give an example of the phenomenon we just mentioned
in which HOD of the generic extension is properly contained in HOD of the ground model.
Let $\mathbb{P}$ be Cohen forcing and $g : \omega \to \omega$ be a Cohen real over $L$.
Of course, $g \not\in L$.
In $L[g]$,
let $\mathbb{Q}$ be the Easton poset that forces
\begin{equation*}
2^{\omega_n}=\begin{cases} \omega_{n+1} \quad g(n)=0\\
	\omega_{n+2} \quad g(n)=1.\\
	\end{cases}
\end{equation*}
Both $\mathbb{P}$ and $\mathbb{Q}$ are cardinal preserving.
Now let $H$ be an $L[g]$-generic filter on $\mathbb{Q}$.
Observe that 
$g \in \mathrm{HOD}^{L[g][H]}$ because it can
be read off from $\kappa \mapsto 2^\kappa$ in $L[g][H]$.
Now let $\lambda$ be a regular cardinal greater than $|\mathbb{P} * \mathbb{Q}|$.
Then $\mathbb{P} * \mathbb{Q} * \mathrm{Coll}(\omega,\lambda)$
and $\mathrm{Coll}(\omega,\lambda)$ have isomorphic Boolean completions,
so there is an $L$-generic filter $J$ on $\mathrm{Coll}(\omega,\lambda)$
and an $L[g][H]$-generic filter $I$ on $\mathrm{Coll}(\omega,\lambda)$
such that
$L[J] = L[g][H][I]$.
Using the fact that
$\mathrm{Coll}(\omega,\lambda)$ is definable and weakly homogeneous
we see that
$$L = \mathrm{HOD}^{L[J]} = \mathrm{HOD}^{L[g][H][I]} 
\subsetneqq \mathrm{HOD}^{L[g][H]}$$
where the inequality is witnessed by the Cohen real $g$.

An important fact about forcing which was discovered relatively recently
is that if  $\delta$ is a regular uncountable cardinal and $\mathbb{P} \in V_\delta$
is a poset,
then $V$ is definable from $\mathcal{P}(\delta) \cap V$ in $V[G]$.
Towards the precise statement and proof,
we make the following definitions.

\begin{definition}
Let $\delta$ be a regular uncountable cardinal and $N$ be a transitive class model of ZFC.
Then
\begin{itemize}
\item 
$N$ has the {\em $\delta$-covering property} iff for every $\sigma \subseteq N$ 
with $|\sigma| <\delta$, there exists  $\tau \in N$ such that $|\tau| <\delta$ and
$\tau \supseteq \sigma$, and
\item
$N$ has the {\em $\delta$-approximation property} iff 
for every cardinal $\kappa$ with $\mathrm{cf}(\kappa) \geq \delta$ and every 
$\subseteq$-increasing sequence of sets
$\langle \tau_\alpha \mid \alpha < \kappa \rangle$ from $N$, $\bigcup \tau_\alpha \in N$.
\end{itemize}
\end{definition}

By Jensen's theorem,
$L$ has the $\delta$-covering property in $V$ for every regular $\delta > \omega$
if $0^\#$ does not exist.
Next,
we show that $V$ has covering and approximation properties
in its generic extensions.

\begin{lemma}\label{dcda}
Let $\delta > \omega$ regular and $\mathbb{P}$ a poset with $|\mathbb{P}|<\delta$. 
Then $V$ has $\delta$-covering and $\delta$-approximation in $V[G]$ whenever
$G$ is a $V$-generic filter on $\mathbb{P}$.
\end{lemma}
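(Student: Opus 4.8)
The plan is to prove the two properties separately, using throughout that since $|\mathbb{P}| < \delta$ and $\delta$ is regular, $\mathbb{P}$ is $\delta$-c.c.\ and that cofinalities $\geq \delta$ are the same in $V$ and $V[G]$.

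For $\delta$-covering, I would argue directly with nice names. Let $\sigma \in V[G]$ with $\sigma \subseteq V$ and $|\sigma|^{V[G]} < \delta$. Fix a name $\dot f$ and an ordinal $\lambda < \delta$ with $\dot f^G : \lambda \to V$ a surjection onto $\sigma$ (possible since $|\sigma| < \delta$ in $V[G]$). Working in $V$, set $\tau_\xi = \{ x : \exists p \in \mathbb{P}\ (p \Vdash \dot f(\check\xi) = \check x) \}$ for $\xi < \lambda$. For fixed $\xi$ the assignment $p \mapsto x$ is a partial function, so $|\tau_\xi| \leq |\mathbb{P}| < \delta$, and $\tau := \bigcup_{\xi < \lambda} \tau_\xi \in V$ has $|\tau| \leq \lambda \cdot |\mathbb{P}| < \delta$ by regularity of $\delta$. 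Each element of $\sigma$ is $\dot f^G(\xi)$ for some $\xi$ and is forced equal to a check-value by a condition in $G$, so $\sigma \subseteq \tau$. I expect no difficulty here.

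For $\delta$-approximation, let $\langle \tau_\alpha : \alpha < \kappa \rangle \in V[G]$ be $\subseteq$-increasing with each $\tau_\alpha \in V$ and $\mathrm{cf}(\kappa) \geq \delta$, and put $A = \bigcup_\alpha \tau_\alpha$. The first, easy step is to show that $A$ is $\delta$-approximated, in the sense that $A \cap a \in V$ for every $a \in V$ with $|a|^V < \delta$. Indeed, for such $a$ each $x \in a \cap A$ has a least entry stage $\alpha^*(x) = \min\{ \alpha : x \in \tau_\alpha \} < \kappa$, and since $|\{ \alpha^*(x) : x \in a \cap A \}| \leq |a| < \delta \leq \mathrm{cf}(\kappa)$, this set of stages is bounded by some $\eta < \kappa$; then $a \cap A \subseteq \tau_\eta$, so $A \cap a = \tau_\eta \cap a \in V$. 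This is precisely where the hypothesis $\mathrm{cf}(\kappa) \geq \delta$ enters, and it is the reason for stating approximation in the increasing-union form.

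The remaining step---upgrading ``$A \cap a \in V$ for all small $a \in V$'' to ``$A \in V$''---is the main obstacle, since $A$ itself need not stabilize (e.g.\ $A = \kappa$ with $\tau_\alpha = \alpha$, where the entry stages are cofinal in $\kappa$ yet $A \in V$). Here I would invoke the standard fact that $\delta$-c.c.\ forcing of size $< \delta$ adds no non-$\delta$-approximated subset of the ground model. Its proof passes to a pair $G_0, G_1$ of mutually $V$-generic filters for $\mathbb{P}$, so that $V[G_0] \cap V[G_1] = V$; writing $A_i = \dot A^{G_i}$, both are $\delta$-approximated over $V$, and one shows $A_0 = A_1$: their symmetric difference $d$ is again $\delta$-approximated and is fixed by the coordinate-swapping automorphism of $\mathbb{P} \times \mathbb{P}$, and a $\delta$-c.c.\ antichain argument forces $d = \emptyset$. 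Then $A_0 \in V[G_0] \cap V[G_1] = V$, and since the actual $G$ may be taken as a coordinate of such a product, $A \in V$. The delicate core is the antichain argument showing $d = \emptyset$; everything else is bookkeeping with nice names and the absoluteness of cofinalities $\geq \delta$ under $|\mathbb{P}| < \delta$.
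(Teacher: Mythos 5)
Your covering argument is essentially the paper's: both replace $\sigma$ by a name for a surjection from some ordinal $\gamma<\delta$ onto $\sigma$ (using the chain condition to bound $|\sigma|$ in the ground model) and then take the ground-model set of all possible values, which has size $<\delta$ because $|\mathbb{P}|<\delta$ and $\delta$ is regular. No issues there.

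For approximation you take a genuinely different and much longer route, and the step you yourself flag as the ``delicate core'' is the step that fails as stated. The paper exploits the increasing-union formulation directly: below the given condition $p$, choose for each $\alpha<\kappa$ a condition $p_\alpha$ deciding the exact value of $\tau_\alpha$; since $|\mathbb{P}|<\delta\le\cf(\kappa)$, a single condition $q$ occurs as $p_\alpha$ for cofinally many $\alpha$, and because the sequence is $\subseteq$-increasing, $q$ then forces $\bigcup_\alpha\tau_\alpha$ to equal a specific ground-model set; density finishes. Your reduction to the Hamkins-style approximation property (``$A\cap a\in V$ for all $a\in V$ of size $<\delta$ implies $A\in V$'') is correct, and that property does hold for posets of size $<\delta$, but the proof you sketch for it is wrong: for mutually generic $G_0,G_1$ one cannot conclude $A_0=A_1$. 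If $\dot A$ is a name taking two distinct ground-model values according to whether some fixed nontrivial $p$ lies in $G$, then $\dot A$ is forced to be $\delta$-approximated (indeed forced to lie in $V$), yet $A_0\neq A_1$ for a dense set of pairs; the symmetric difference is swap-invariant but nonempty, so no antichain argument can force $d=\emptyset$. What is true is only that each $A_i$ lands in $V$, and the standard proof of that is direct rather than via products: if $p\Vdash\dot A\notin\check V$, choose for each $q\le p$ an ordinal $\gamma_q$ whose membership in $\dot A$ is not decided by $q$; then $a=\{\gamma_q\mid q\le p\}\in V$ has size $\le|\mathbb{P}|<\delta$, so some $q\le p$ forces $\dot A\cap\check a=\check b$, and this $q$ decides $\gamma_q$ --- contradiction. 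So your outline is repairable, but as written it has a genuine gap exactly at the invoked ``standard fact,'' and in any case the increasing-union form admits the paper's three-line pigeonhole proof, which you missed.
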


\begin{proof}
First, we show the covering property.
Let $\sigma$ be a name such that $\Vdash \sigma \subset V \mbox{ and } |\sigma| < \delta$.
By the $\delta$ chain condition,
there are fewer than $\delta$ possible values of $|\sigma|$.
Let $\gamma < \delta$ be the supremum of these and pick $\dot f$
such that $\Vdash \dot f : \gamma \twoheadrightarrow \sigma$.
To finish this part of the proof,
let $\tau$ be the set of possible values for $\dot f ( \alpha ) $ and $\alpha < \gamma$.

Second, we prove the approximation property.
Say $p$ forces that $\mathrm{cf}(\kappa) \geq \delta$
and $\langle \tau_\alpha \mid \alpha < \kappa \rangle$ 
is an increasing sequence of sets from $V$.
For $\alpha < \kappa$,
let $p_\alpha$ decide the value of $\tau_\alpha$.
Because $|\mathbb{P}| < \delta \leq \cf(\kappa) \le \kappa$ there must be some 
$p_\beta$ that is repeated cofinally often and so determines $\bigcup \tau_{\alpha}$,
thereby forcing the union to belong to $V$.
By density,
the union is forced to belong to $V$.
\end{proof}

The next theorem is the promised result on the definability of the ground model,
which we state somewhat more generally.
Part (1) is due to Hamkins and (2) to Laver and Woodin independently.

\begin{theorem}\label{hlw}
Let $\delta$ be a regular uncountable cardinal.
Suppose that $M$ and $N$ are
transitive class model of ZFC that satisfies the 
$\delta$-covering and $\delta$-approximation properties, $\delta^+=(\delta^+)^N=(\delta^+)^M$,
and
$$N \cap \mathcal{P}(\delta) = M \cap \mathcal{P}(\delta).$$
(1) Then $M = N$.

\noindent (2) In particular,
$N$ is $\Sigma_2$-definable from $N \cap \mathcal{P}(\delta)$. 
\end{theorem}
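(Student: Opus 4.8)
The plan is to deduce $M=N$ from the fact, recalled just above the statement, that a transitive class model of ZFC is determined by its sets of ordinals; it therefore suffices to prove that $M$ and $N$ have the same subsets of every ordinal, and in fact I will show by induction on cardinals $\lambda\ge\delta$ that $\mathcal{P}(\lambda)\cap M=\mathcal{P}(\lambda)\cap N$, so that by symmetry only $\mathcal{P}(\lambda)\cap N\subseteq M$ is needed. Since every set of ordinals is a subset of some cardinal, this gives the theorem. The base case $\lambda=\delta$ is the hypothesis; moreover, because $(\delta^+)^M=(\delta^+)^N=\delta^+$ and the two models agree on $\mathcal{P}(\delta)$, they agree on every bijection $\gamma\to\delta$ for $\delta\le\gamma<\delta^+$ (each is coded by a subset of $\delta$), and hence on $\mathcal{P}(\gamma)$ for all such $\gamma$. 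This both settles the base and fixes the coding device used below: any subset of an ordinal of cardinality $<\lambda$ can be transported, by a bijection living in $\mathcal{P}(\mu)$ for its cardinality $\mu<\lambda$, into $\mathcal{P}(\mu)$, where the induction hypothesis applies.

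The clean half of the induction handles every $\lambda$ with $\cf(\lambda)\ge\delta$. Given $A\in\mathcal{P}(\lambda)\cap N$, write $A=\bigcup_{\xi}(A\cap\xi)$ along a cofinal increasing sequence of smaller ordinals of length $\cf(\lambda)\ge\delta$. By the induction hypothesis and the coding device, each initial piece $A\cap\xi$ already lies in $M$: if $\lambda=\mu^+$ is a successor cardinal the pieces have cardinality $\le\mu$ and land in $\mathcal{P}(\mu)\cap M$ (note this works even when $\mu$ is singular, since $\mu<\lambda$ is already treated), and if $\lambda$ is inaccessible or singular of cofinality $\ge\delta$ the pieces have cardinality $<\lambda$ and land in $M$ likewise. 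Thus $A$ is exhibited as a $\subseteq$-increasing union, over an index set of cofinality $\ge\delta$, of sets from $M$, and the $\delta$-approximation property of $M$ gives $A\in M$. This is exactly the step for which the increasing-union formulation of approximation is designed.

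The main obstacle is the remaining case: $\lambda$ a singular cardinal with $\cf(\lambda)=\rho<\delta$. Here the approximation property is useless, as it never produces unions of length below $\delta$, and the entire burden falls on the covering property. Fixing an increasing cofinal $\langle\lambda_i\mid i<\rho\rangle$, the approximations $s_i:=A\cap\lambda_i$ all lie in $M\cap N$ by the induction hypothesis, so the problem reduces to placing the single $\rho$-sequence $\langle s_i\mid i<\rho\rangle$ into $M$, after which $A=\bigcup_i s_i\in M$. I would apply the $\delta$-covering property of $M$ to the range $\{s_i\mid i<\rho\}$, a subset of $M$ of size $<\delta$, trapping it in some $\tau\in M$ with $|\tau|^M<\delta$; fixing an enumeration of $\tau$ in $M$, the sequence is then coded by a choice function $\rho\to|\tau|^M$, that is, by a subset of an ordinal below $\delta$. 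Since $M$ and $N$ agree on $\mathcal{P}(\delta)$, it suffices to see that this coded function belongs to one of the models; performing the covering symmetrically on both sides and reconciling the two covers so that the coded choice data genuinely lands in the shared $\mathcal{P}(\delta)$ is the delicate heart of the argument, and it is precisely where the hypothesis $N\cap\mathcal{P}(\delta)=M\cap\mathcal{P}(\delta)$ does its essential work.

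Finally, part (2) is a corollary of the uniqueness in part (1). Writing $P:=N\cap\mathcal{P}(\delta)$, I would define, for $x$ a set of ordinals, that $x\in N$ iff there exist an ordinal $\theta$ and a set $W\in V_\theta$ such that $V_\theta$ satisfies ``$W$ is a transitive model of ZFC with the $\delta$-covering and $\delta$-approximation properties, $(\delta^+)^W=\delta^+$, and $W\cap\mathcal{P}(\delta)=P$'' together with $x\in W$. The covering and approximation clauses are assertions about subsets of $W$ of size $<\delta$ and about $\subseteq$-increasing sequences, so they reflect into any sufficiently large $V_\theta$; consequently the displayed condition has the form $\exists\theta\,(V_\theta\models\cdots)$ and is $\Sigma_2$ in the parameters $P$ and $\delta$. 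By part (1) any such $W$ agrees with $N$ on all sets of ordinals, hence decides membership of $x$ correctly, so this $\Sigma_2$ condition indeed defines $N$ (general $x$ being handled through the recovery of $N$ from its sets of ordinals). The only point requiring care here is that ``enough of ZFC'' plus the two structural properties is first-order expressible inside a $V_\theta$, which is routine given that $N$ is reconstructed from its sets of ordinals.
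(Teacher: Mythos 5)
Your global strategy is sound and matches the paper's: recover the models from their sets of ordinals, induct so that one only has to show $\mathcal{P}(\lambda)\cap N\subseteq M$, dispose of the interval $[\delta,\delta^+)$ by coding bijections as subsets of $\delta$, and use $\delta$-approximation to handle every $\lambda$ with $\cf(\lambda)\ge\delta$ as an increasing union of smaller pieces. Part (2) is also essentially the paper's argument (modulo the fact that one should ask for ZFC minus Power Set of the witnessing $W\in V_\theta$, not full ZFC). The problem is the case $\cf(\lambda)<\delta$, which you correctly identify as the heart of the theorem and then do not prove. You reduce it to placing the sequence $\langle s_i\mid i<\rho\rangle$ of initial segments into $M$, propose to $\delta$-cover the family $\{s_i\mid i<\rho\}$ in $M$ and in $N$ and to ``reconcile'' the two covers, and explicitly defer that reconciliation. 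This is a genuine gap, and moreover the route you sketch does not go through: the alternating-cover trick only merges when the objects being covered are \emph{ordinals}, so that every $E_\alpha$ is automatically a subset of both models and one can arrange $E_\alpha\subseteq F_\alpha\subseteq E_{\alpha+1}$ with a common union. Here the covered objects are subsets of $\lambda$; a cover $\tau\in M$ may contain elements of $M\setminus N$, one is forced to cover only $\tau\cap N$ on the other side, the limit sets $E=\bigcup\tau_\alpha\in M$ and $F=\bigcup\tau'_\alpha\in N$ need not coincide, and their common part $E\cap N=F\cap M$ is not known to lie in either model. There is also no way to index the $s_i$ by ordinals below $\delta$ in a model-independent fashion, since a well-ordering of $\mathcal{P}(\lambda_i)\cap M$ and one of $\mathcal{P}(\lambda_i)\cap N$ differ by an isomorphism that is not a subset of $\delta$.

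The paper's proof splits $\cf(\gamma)<\delta$ into two sub-cases, and both ideas are absent from your proposal. When $|A|<\delta$ it covers $A$ itself, a set of ordinals, by an alternating $\delta$-chain $E_\alpha\in M$, $F_\alpha\in N$ with $E_\alpha\subseteq F_\alpha\subseteq E_{\alpha+1}$; $\delta$-approximation puts the common union $E$ (of size $\le\delta$) into $M\cap N$, and the Mostowski collapse of $E$ transports $A$ to a subset of an ordinal $\theta<\delta^+=(\delta^+)^M=(\delta^+)^N$, where the induction hypothesis applies. When $|A|\ge\delta$ it proves that $A\in M$ is equivalent to the conjunction of (i) $A\cap\alpha\in M$ for all $\alpha<\gamma$ and (ii) $A\cap\sigma\in M$ for every $\sigma\in M$ of size $<\delta$ (conditions transferable between $M$ and $N$ by induction and by the previous sub-case), and verifies the hard direction by building a $\delta$-chain of elementary substructures $X_\alpha\prec V_\theta$ together with covers $Y_\alpha\in M$ of $X_\alpha\cap M$; setting $Y=\bigcup Y_\alpha=X\cap M\in M$, the sequence $\langle A\cap\alpha\mid\alpha\in Y\cap\gamma\rangle$ becomes definable \emph{inside} $M$ from the single small trace $A\cap Y\in M$ together with $Y$ and $\gamma$, and its union is $A$. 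That definability device is the key idea your reduction to a $\rho$-sequence cannot replace, because the information needed to specify each $s_i$ inside $M$ is not bounded by $\delta$.
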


\begin{proof}
For part (1) we show by recursion on ordinals $\gamma$ that for all $A \subseteq \gamma$
$$A \in M \iff A \in N.$$
The case $\gamma \le \delta$ is clear.
By the induction hypothesis,
$M$ and $N$ have the same cardinals $\le \gamma$,
and, if $\gamma$ is not a cardinal in these models,
then they have the same power set of $\gamma$.
Thus, we may assume that $\gamma$ is a cardinal of both $M$ and $N$.

\begin{case1}
$\mathrm{cf}(\gamma) \ge \delta$
\end{case1}

Then, $A \in M$ iff $A \cap \alpha \in M$ for
every $\alpha < \gamma$.  The forward direction is clear.  For the
reverse, use the $\delta$-approximation property to see
$$A = \bigcup \{ A \cap \alpha \mid \alpha < \gamma \} \in M.$$
The same holds for $N$.

\begin{case2}
$\gamma > \delta$, $\mathrm{cf}(\gamma) < \delta$ and $|A| < \delta$
\end{case2}

Define increasing sequences $\langle E_\alpha \mid \alpha < \delta \rangle$
and $\langle F_\alpha \mid \alpha < \delta \rangle$
of subsets of $\gamma$ such that
$| E_\alpha |, | F_\alpha | < \delta$, 
$A \subseteq E_0$,
$E_\alpha \subseteq F_\alpha$,
$\bigcup_{\alpha < \beta} F_\alpha  \subseteq E_\beta$,
$E_\alpha \in M$ and $F_\alpha \in N$.
For the construction, use the $\delta$-covering property alternately for $M$ and $N$. 
Then define $E = \bigcup E_\alpha  = \bigcup F_\alpha$ and note that 
$E \in M \cap N$ by $\delta$-approximation property. 
Let $\theta$ be the order-type of $E$
and $\pi : E  \to \theta$ the Mostowski collapse.
Then $\pi \in M \cap N$.
Also,
$\theta < \delta^+ = (\delta^+)^M = (\delta^+)^N$ because $|E| \leq \delta$.
By the induction hypothesis,
$$A \in M \iff  \pi[A] \in M \iff  \pi[A] \in N \iff A \in N.$$

\begin{case3}
$\gamma > \delta$, $\mathrm{cf}(\gamma) < \delta$ and $|A| \ge \delta$
\end{case3}

We claim that $A \in M$ iff
	\begin{enumerate}
		\item[(i)$_M$] for every $\alpha < \gamma$, $A \cap \alpha \in M$ and
		\item[(ii)$_M$] for every $\sigma \subseteq \gamma$,
		if $|\sigma|<\delta$ and $\sigma \in M$, then  $A \cap \sigma \in M$.
	\end{enumerate}
We also claim that $A \in N$ iff (i)$_N$ and (ii)$_N$.
The induction hypothesis is that (i)$_M$ iff (i)$_N$
and in case (2) we showed that (ii)$_M$ iff (ii)$_N$,
so our claim  implies $A \in M$ iff $A \in N$ as desired.

The forward implication of the claim is obvious,
so assume (i)$_M$ and (ii)$_M$.
Pick $\theta$ with $\mathrm{cf}(\theta) > \gamma$ and the defining formula
for $M$ absolute to $V_\theta$.
Define an increasing chain $\langle X_\alpha \mid \alpha < \delta \rangle$
of elementary substructures of $V_\theta$
and an increasing chain $\langle Y_\alpha \mid \alpha < \delta \rangle$
of subsets of $V_\theta \cap M$
such that
$|X_{\alpha}|,|Y_{\alpha}|<\delta$,
$A \in X_0$,
$\sup (X_0 \cap \gamma) = \gamma$,
$X_\alpha \cap N \subseteq Y_\alpha$, $Y_\alpha \in M$ and
$\bigcup_{\alpha<\beta} (Y_\alpha \cup X_\alpha ) \subseteq X_\beta$.
We use Downward Lowenheim-Skolem to obtain $X_\alpha$
and the 
$\delta$-covering property to obtain $Y_{\alpha}$.
Define $X = \bigcup X_{\alpha}$ and $Y= \bigcup Y_{\alpha}$.  Then $X \prec V_\theta$
and $Y = X \cap M \prec V_\theta \cap M$.
By the $\delta$-approximation property,
$Y \in M$.
By (ii)$_M$,
for every $\alpha < \delta$, 
$A \cap Y_\alpha \in M$.
Again, by the $\delta$-approximation property, $A \cap Y \in M$.
Now consider an arbitrary $\alpha \in Y \cap \gamma$ and observe that
\begin{itemize}
\item $A \cap \alpha \in Y$ because $A \in X$ so $A\cap\alpha\in X$, and $A\cap\alpha \in M$ by (ii)$_M$; and 
\item for every $b \in Y$,
if $b \cap Y = (A \cap Y) \cap \alpha$, then $Y \models b = A \cap \alpha$, so $b = A \cap \alpha$.
\end{itemize}
Here we have used (i)$_M$ and $Y \prec V_\theta \cap M$.
So the sequence
$\langle A \cap \alpha \mid \alpha \in Y \cap \gamma \rangle$
is definable in $M$ from parameters $\gamma$, $Y$ and $A \cap Y$.
In particular, this function belongs to $M$.
The union of its range  is $A$, so $A \in M$.

Part (2) now follows. $A \in N$ iff there is a large regular $\theta$ and a model $M \subseteq V_{\theta}$ of ZFC-Power Set satisfying $\delta$-covering and $\delta$-approximation in $V_{\theta}$ such that $M \cap \mathcal{P}(\delta) = N \cap \mathcal{P}(\delta)$ and $A \in M$. This is a $\Sigma_2$ statement.
\end{proof}

We will use the following amazing result.
The final equality is not as well known, so we include a proof. 
Note that $\OD_{\mathcal{A}}$ here denotes the class of all sets that are definable using ordinals and members of $\mathcal{A}$, and $\HOD_\mathcal{A}$ is defined correspondingly.

\begin{theorem}[Vop\v{e}nka]\label{vopenka}
For every ordinal $\kappa$,
there exists $\mathbb{B} \in \mathrm{HOD}$
such that 
$$\mathrm{HOD} \models \mathbb{B} \mbox{ is a complete Boolean algebra}$$
and,
for every $a \subseteq \kappa$,
there exists a $\mathrm{HOD}$-generic filter $G$ on $\mathbb{B}$
such that 
$$\mathrm{HOD}[a] \subseteq \mathrm{HOD}_{\{G\}} = \mathrm{HOD}_{\{a\}} = \mathrm{HOD}[G].$$
\end{theorem}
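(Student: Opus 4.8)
The plan is to realize $\mathbb{B}$ as the \emph{Vop\v{e}nka algebra} coding the $\OD$ subsets of $\mathcal{P}(\kappa)$. First I would fix the canonical $\OD$ well-ordering of the class $\OD$ and use it to enumerate, without repetition and in an $\OD$ fashion, all $\OD$ subsets of $\mathcal{P}(\kappa)$ as $\langle S_\xi \mid \xi < \lambda \rangle$ for some ordinal $\lambda$; there are only set-many such subsets since each is a subset of the fixed set $\mathcal{P}(\kappa)$. I would then transport the Boolean operations of $(\mathcal{P}(\mathcal{P}(\kappa)), \subseteq)$ to the index set by declaring $\xi \le_{\mathbb{B}} \eta \iff S_\xi \subseteq S_\eta$, $\xi \wedge \eta = \zeta \iff S_\zeta = S_\xi \cap S_\eta$, and so on, with $0$ and $1$ the indices of $\emptyset$ and $\mathcal{P}(\kappa)$. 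Because the enumeration is $\OD$, the structure $\mathbb{B}$ has domain a set of ordinals and $\OD$ operations, so $\mathbb{B} \in \HOD$. For completeness in $\HOD$, note that if $I \subseteq \lambda$ and $I \in \HOD$, then $\{S_\xi \mid \xi \in I\}$ is $\OD$, hence $\bigcup_{\xi \in I} S_\xi$ is an $\OD$ subset of $\mathcal{P}(\kappa)$ and so equals some $S_\zeta$; this $\zeta$ lies in $\HOD$ and is the least upper bound of $I$ in $\le_{\mathbb{B}}$, a fact absolute to $\HOD$ since it mentions only the order and the parameters $I,\zeta$.

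Next I would attach to each $a \subseteq \kappa$ the filter $G = G_a := \{\xi < \lambda \mid a \in S_\xi\}$ and verify it is a $\HOD$-generic ultrafilter. It is a filter because $\le_{\mathbb{B}}$ and $\wedge$ are computed by $\subseteq$ and $\cap$, and an ultrafilter because $a$ belongs to exactly one of $S_\xi$ and $\mathcal{P}(\kappa) \setminus S_\xi$. For genericity, let $D \in \HOD$ be a maximal antichain; in the complete algebra $\mathbb{B}$ this forces $\sup D = 1$, whence by the completeness computation $\bigcup_{\xi \in D} S_\xi = S_1 = \mathcal{P}(\kappa)$. Since $a \in \mathcal{P}(\kappa)$, some $\xi \in D$ has $a \in S_\xi$, that is $G \cap D \ne \emptyset$.

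Then I would establish the interdefinability that yields the inner inclusions and the middle equality. On one hand $G$ is $\OD$ from $a$ by its very definition, so $\HOD_{\{G\}} \subseteq \HOD_{\{a\}}$; on the other, writing $S^\beta = \{b \subseteq \kappa \mid \beta \in b\} = S_{\rho(\beta)}$ for the $\OD$ map $\rho$, we recover $a = \{\beta < \kappa \mid \rho(\beta) \in G\}$, so $\HOD_{\{a\}} \subseteq \HOD_{\{G\}}$ and the two coincide. Since every element of $\HOD[a]$ is built from $a$ and members of $\HOD$ (each of which is $\OD$), we also get $\HOD[a] \subseteq \HOD_{\{a\}} = \HOD_{\{G\}}$; and as $\HOD[G]$ is the least inner model over $\HOD \cup \{G\}$ while $\HOD_{\{a\}}$ is an inner model of $\ZFC$ containing $G$, we likewise get $\HOD[G] \subseteq \HOD_{\{a\}}$.

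The one remaining and genuinely non-routine point---the ``final equality''---is the reverse inclusion $\HOD_{\{a\}} \subseteq \HOD[G]$, and this is where I expect the real work to lie. By the remark that a model of $\ZFC$ is determined by its sets of ordinals, it suffices to place every set of ordinals $A \in \HOD_{\{a\}}$ into $\HOD[G]$. Such an $A$ is $\OD$ from $a$, say $A = \{\beta < \mu \mid \varphi(\beta, a, \vec{\alpha})\}$ as computed in $V$. The key observation is that for each fixed $\beta$ the set $T_\beta := \{b \subseteq \kappa \mid \varphi(\beta, b, \vec{\alpha})\}$ is an $\OD$ subset of $\mathcal{P}(\kappa)$, hence $T_\beta = S_{\eta(\beta)}$ for an $\OD$ map $\eta$; then $\beta \in A \iff a \in T_\beta \iff \eta(\beta) \in G$, so $A = \{\beta < \mu \mid \eta(\beta) \in G\}$ is $\OD$ from $G$ and lies in $\HOD[G]$. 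The crucial subtlety to get right is that no comparison of satisfaction between $V$ and the smaller model $\HOD[G]$ is ever needed: passing to sets of ordinals converts the $V$-definition of $A$ into membership in the fixed $\HOD$-definable family $\langle S_\xi \mid \xi < \lambda \rangle$, and the generic $G$ reads off exactly which indices apply to $a$.
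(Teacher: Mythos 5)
Your proposal is correct and follows essentially the same route as the paper's proof: the algebra is the $\OD$ power set of $\mathcal{P}(\kappa)$ transferred to an ordinal by a definable bijection, $G$ is $\{\xi \mid a \in S_\xi\}$, and the key inclusion $\HOD_{\{a\}} \subseteq \HOD[G]$ is obtained via the $\OD$ map $\beta \mapsto \eta(\beta)$ indexing $\{b \subseteq \kappa \mid \varphi(\beta,b,\vec{\alpha})\}$, exactly as in the paper. Your version merely spells out the genericity and completeness checks that the paper leaves implicit.
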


\begin{proof}
First define $\mathbb{B}^*$ to be $\mathcal{P}(\mathcal{P} ( \kappa ) )\cap \mathrm{OD}$
with its Boolean algebra structure.
Then $\mathbb{B}^* \in \mathrm{OD}$ and 
$\mathbb{B}^*$ is $\mathrm{OD}$-complete.
Given $a \subseteq \kappa$,
we let $$G^* = \{ X \in \mathbb{B}^* \mid a \in X \}$$
and see that $G^*$ is an $\mathrm{OD}$-generic filter on $\mathbb{B}^*$.
Fix a definable 
bijection $\pi$ from 
$\mathcal{P}(\mathcal{P} ( \kappa )) \cap \mathrm{OD}$ to an ordinal.
Define $\mathbb{B}$ so that $\pi : \mathbb{B}^* \simeq \mathbb{B}$.
Let $G = \pi[G^*]$.
Then $G$ is a $\mathrm{HOD}$-generic filter on $\mathbb{B}$.
It is straightforward to see that $G \in \mathrm{HOD}_{\{ a \}}$
so it remains to see that $\mathrm{HOD}_{\{a\}} \subseteq \mathrm{HOD}[G]$.
Let $S \in \mathrm{HOD}_{\{a\}}$;
 we may assume $S$ is a set of ordinals. 
Say
$$S = \{ \zeta  < \theta  \mid V_\theta \models \varphi ( \zeta , \eta_1 , \dots , \eta_n , a ) \}.$$
For each $\zeta < \theta$,
let $$X_\zeta = \{ b \subseteq \kappa \mid 
V_\theta \models \varphi ( \zeta , \eta_1 , \dots , \eta_n , b ) \}.$$
Then $\zeta \mapsto \pi(X_\zeta)$ belongs to $\mathrm{HOD}$.
So $S = \{ \zeta \mid \pi (X_\zeta ) \in G \}$ belongs to $\mathrm{HOD}[G]$.\end{proof}

Combining the results in this section, we obtain the following.

\begin{corollary} \label{hodgen}
Let $\mathbb{P} \in \mathrm{OD}$ be a weakly homogeneous poset.
Suppose
G is a  $V$-generic filter on $\mathbb{P}$.
Then $\mathrm{HOD}^V$ is a generic extension of $\mathrm{HOD}^{V[G]}.$
\end{corollary}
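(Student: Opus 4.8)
The plan is to sandwich $\mathrm{HOD}^V$ between $\mathrm{HOD}^{V[G]}$ and a single set-generic extension of $\mathrm{HOD}^{V[G]}$, and then invoke the intermediate model theorem. Write $H = \mathrm{HOD}^{V[G]}$ and $K = \mathrm{HOD}^V$. Since $\mathbb{P}$ is ordinal definable and weakly homogeneous, the observation at the start of this section gives $H \subseteq K$, so $K$ is a transitive inner model of $V[G]$ extending $H$.

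First I would fix a regular uncountable $\delta$ with $|\mathbb{P}| < \delta$. By Lemma \ref{dcda}, $V$ has the $\delta$-covering and $\delta$-approximation properties in $V[G]$; since $|\mathbb{P}| < \delta$ the forcing is $\delta$-c.c.\ and hence preserves $\delta^+$, so $\delta^+ = (\delta^+)^V$ and the hypotheses of Theorem \ref{hlw}(2) hold with $N = V$. Therefore $V$ is $\Sigma_2$-definable in $V[G]$ from the set $\mathcal{P}(\delta)^V$. Working in $V[G]$ (which satisfies AC), I would code $\mathcal{P}(\delta)^V$ by a set of ordinals $a$ from which it is recoverable by a formula; then $V$, and with it the class $\mathrm{OD}^V$, is definable in $V[G]$ from $a$ together with ordinal parameters.

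Now comes the key step. Let $T \in K$ be a set of ordinals, say $T = \{\xi : V \models \varphi(\xi, \vec\beta)\}$ with $\vec\beta$ ordinals. Relativizing $\varphi$ to the $V[G]$-definition of $V$, I obtain $T = \{\xi : V[G] \models \tilde\varphi(\xi, \vec\beta, a)\}$, so $T$ is ordinal definable in $V[G]$ from $a$; hence $T \in \mathrm{HOD}^{V[G]}_{\{a\}}$. As both $K$ and $\mathrm{HOD}^{V[G]}_{\{a\}}$ are models of ZFC, each determined by its sets of ordinals (the fact emphasized at the start of this section), this yields $K \subseteq \mathrm{HOD}^{V[G]}_{\{a\}}$. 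Applying Vop\v{e}nka's theorem (Theorem \ref{vopenka}) inside $V[G]$ to the set of ordinals $a$, there is a complete Boolean algebra $\mathbb{B} \in H$ and an $H$-generic filter $G_a$ on $\mathbb{B}$ with $\mathrm{HOD}^{V[G]}_{\{a\}} = H[G_a]$. Thus
$$ H \subseteq K \subseteq H[G_a], $$
and $H[G_a]$ is a set-generic extension of $H$.

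Finally, $K = \mathrm{HOD}^V$ is a transitive model of ZFC lying between $H$ and its set-generic extension $H[G_a]$, so by the intermediate model theorem (Grigorieff) there is a complete subalgebra $\mathbb{D} \in H$ of $\mathbb{B}$ with $K = H[G_a \cap \mathbb{D}]$; in particular $\mathrm{HOD}^V$ is a generic extension of $\mathrm{HOD}^{V[G]}$, as desired. I expect the main obstacle to be the sandwiching inclusion $K \subseteq H[G_a]$: it is precisely here that one must convert the ``ground-model definability from a set parameter'' supplied by Theorem \ref{hlw} into membership in a genuine set-generic extension, which is exactly what Vop\v{e}nka's theorem accomplishes; the closing appeal to the intermediate model theorem can then be treated as a black box.
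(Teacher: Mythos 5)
Your proposal is correct and follows essentially the same route as the paper's proof: sandwich $\mathrm{HOD}^V$ between $\mathrm{HOD}^{V[G]}$ and $\mathrm{HOD}^{V[G]}_{\{a\}}$ using ground-model definability (Lemma \ref{dcda} plus Theorem \ref{hlw}), realize the latter as a Vop\v{e}nka generic extension of $\mathrm{HOD}^{V[G]}$, and finish with the intermediate model theorem. The only cosmetic difference is that the paper codes $\mathcal{P}(\delta)\cap V$ by a binary relation $E$ on its cardinality rather than directly by a set of ordinals $a$, and you are somewhat more careful than the paper about checking the $\delta^+$-preservation hypothesis of Theorem \ref{hlw} and about spelling out the inclusion $\mathrm{HOD}^{V[G]}\subseteq\mathrm{HOD}^V$.
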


\begin{proof}
Fix $\delta > |\mathbb{P}|$. 
By Lemma \ref{dcda} and Theorem \ref{hlw}, 
$V$ is definable in $V[G]$ from $A = \mathcal{P}(\delta)\cap V$.
In $V$, let $\kappa = |A|$ and $E$ be a binary relation on $\kappa$
such that the Mostowski collapse of $(\kappa,E)$ is $(\mathrm{trcl}(\{A\},\in)$.
Then 
$V_\gamma \in \mathrm{OD}^{V[G]}_{\{E\}}$ for every $\gamma$,
therefore 
$$\mathrm{HOD}^V \subseteq \mathrm{HOD}_{\{ E \}}^{V[G]}.$$
By Theorem \ref{vopenka},
we have a $\mathrm{HOD}^{V[G]}$-generic filter $H$ on a Vop\v{e}nka algebra
so that
$$\mathrm{HOD}_{\{ E \}}^{V[G]} = \mathrm{HOD}^{V[G]} [H].$$
Combining all of the above gives
$$\mathrm{HOD}^{V[G]} \subseteq \mathrm{HOD}^V \subseteq \mathrm{HOD}_{\{E\}}^{V[G]} = \mathrm{HOD}^{V[G]}[H].$$
As $\mathrm{HOD}^V$ is nested between $\mathrm{HOD}^{V[G]}$ and a generic extension thereof, it is itself a generic extension of $\mathrm{HOD}^{V[G]}$ (see \cite{Jech} Theorem 15.43).
\end{proof}

Finally, we discuss again our 
conjecture that clause (2) of Theorem \ref{hoddichotomy} fails.
Let us temporarily call this the HOD conjecture although a slightly different
statement will get this name later.
We wish to see that this conjecture is absolute between $V$ and its generic extensions.
Of course,
Theorem \ref{hoddichotomy} has an extendible cardinal $\delta$ in its hypothesis.
We should assume that we are forcing with a poset $\mathbb{P} \in V_\delta$
to assure  that if $G$ is a $V$-generic filter on $\mathbb{P}$,
then $\delta$ remains extendible in $V[G]$.

To see this, given $\eta>\delta$ limit observe that for each member of $V_\eta^{V[G]}$ we can, by induction on $\eta$ build a name in $V_\eta$ for that member. This is done as usual for nice names by considering maximal antichains, taking advantage of the fact $\mathbb{P}$ is small with respect to $\eta$. Thus $V_\eta[G]=(V_\eta)^{V[G]}$. Now take $j : V_\eta \to V_\theta$ elementary and define $\tilde{\jmath}:V_\eta^{V[G]}\to V_\theta^{V[G]}$ by $j(\tau_G) = j(\tau)_G$. This is a variation on the proof that measurability is preserved by small forcing; see \cite{Jech} Theorem 21.2 or \cite{LevySolovay} Theorem 3.

\begin{corollary}
The following statement
is absolute between $V$ and its generic extensions by posets in $V_\delta$: 
``$\delta$ is an extendible cardinal and for every singular cardinal $\gamma > \delta$,
$\gamma$ is singular in $\mathrm{HOD}$ and
$(\gamma^+)^\mathrm{HOD} = \gamma^+$.''
\end{corollary}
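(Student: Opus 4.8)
The plan is to reduce the statement to the absoluteness of a ``tail'' version of clause (2) of Theorem~\ref{hoddichotomy}, and then verify that tail version directly using Corollary~\ref{hodgen}. Throughout, fix $\mathbb{P}\in V_\delta$ and a $V$-generic filter $G$ on $\mathbb{P}$. Since $|\mathbb{P}|<\delta$, the poset $\mathbb{P}$ has the $\delta$-chain condition, so $V$ and $V[G]$ have the same cardinals and cofinalities $\ge\delta$; in particular the regular cardinals $>\delta$, the singular cardinals $>\delta$, and the values $\gamma^+$ for $\gamma>\delta$ all agree in $V$ and $V[G]$. By the argument of the paragraph preceding this corollary, together with the fact that small forcing creates no new large cardinals (L\'evy--Solovay), extendibility of $\delta$ is absolute between $V$ and $V[G]$. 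Hence Theorem~\ref{hoddichotomy} is available in both models, and it suffices to transfer clause (1).

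First I would record an equivalence valid in any model of $\ZFC$ in which $\delta$ is extendible. Let $(2^-)$ be the statement ``there is $\mu$ such that every regular cardinal $\kappa>\mu$ is measurable in $\HOD$.'' I claim clause (1) holds if and only if $(2^-)$ fails. Indeed, clause (2) trivially implies $(2^-)$; conversely, if clause (1) holds then for every singular $\gamma>\delta$ the ordinal $\gamma$ is a cardinal of $\HOD$ (being singular there) and $(\gamma^+)^{\HOD}=\gamma^+$ is a \emph{successor} cardinal of $\HOD$, hence not measurable in $\HOD$. As there are arbitrarily large singular $\gamma>\delta$, this gives arbitrarily large regular cardinals that are non-measurable in $\HOD$, so $(2^-)$ fails. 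Since exactly one of (1), (2) holds, we conclude (1) $\iff\neg(2^-)$. This is the key move: it lets me ignore the uncontrollable behaviour of $\HOD$ at cardinals near $\delta$ and argue only about large $\kappa$.

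Next I would place $\HOD^V$ and $\HOD^{V[G]}$ over a common inner model. Choose $\lambda$ with $|\mathbb{P}|\le\lambda<\delta$. Because $\mathbb{P}*\mathrm{Coll}(\omega,\check\lambda)$ and $\mathrm{Coll}(\omega,\lambda)$ have isomorphic Boolean completions, there are a $V$-generic $J$ and a $V[G]$-generic $I$ on $\mathrm{Coll}(\omega,\lambda)$ with $V[J]=V[G][I]$. Writing $W=\HOD^{V[J]}=\HOD^{V[G][I]}$ and applying Corollary~\ref{hodgen} to the ordinal-definable, weakly homogeneous poset $\mathrm{Coll}(\omega,\lambda)$, once over $V$ and once over $V[G]$, I obtain Vop\v{e}nka generics $H_0,H_1\in W$ with $\HOD^V=W[H_0]$ and $\HOD^{V[G]}=W[H_1]$. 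Crucially, both Vop\v{e}nka algebras are built inside the \emph{single} model $V[J]$, from $\mathcal{P}(\delta)\cap V$ and $\mathcal{P}(\delta)\cap V[G]$ respectively, so their sizes are bounded by a single cardinal $\theta$ depending only on $V$, $\mathbb{P}$, $\lambda$ and not on $\kappa$.

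Finally I would run L\'evy--Solovay over $W$. For every regular $\kappa>\theta$, both $W[H_0]$ and $W[H_1]$ are generic extensions of $W$ by forcing of size $<\kappa$, so $\kappa$ is measurable in $\HOD^V=W[H_0]$ iff it is measurable in $W$ iff it is measurable in $\HOD^{V[G]}=W[H_1]$. Thus the measurable cardinals of $\HOD^V$ and of $\HOD^{V[G]}$ coincide above $\theta$, and since the regular cardinals $>\delta$ are the same in $V$ and $V[G]$, the statement $(2^-)$ is absolute between $V$ and $V[G]$. Combining this with the equivalence (1) $\iff\neg(2^-)$ in both models, and with the absoluteness of extendibility, yields the corollary. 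I expect the main obstacle to be precisely the size bookkeeping of the third paragraph: the Vop\v{e}nka algebra is far too large to be ``small'' relative to $\delta$, so a naive argument only gives absoluteness for sufficiently large singular cardinals, and it is the dichotomy-driven passage to the tail statement $(2^-)$ that repairs this.
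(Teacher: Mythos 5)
Your proposal is correct, and its skeleton is the same as the paper's: absorb $\mathbb{P}$ into a collapse ($\mathbb{P}*\mathrm{Coll}(\omega,\lambda)\cong\mathrm{Coll}(\omega,\lambda)$ in Boolean completion), pass to the common model $W=\HOD^{V[J]}=\HOD^{V[G][I]}$, and apply Corollary \ref{hodgen} on each side. Where you differ is in making explicit what the paper compresses into ``it is clear from Corollary \ref{hodgen}'': you correctly observe that the Vop\v{e}nka algebra is far too large for a naive L\'evy--Solovay transfer of clause (2) of Theorem \ref{hoddichotomy}, and you repair this by (a) reformulating clause (1) as the failure of the tail statement $(2^-)$ via the dichotomy (for which Corollary \ref{final}, proved in this paper, already suffices --- you need not invoke the full Theorem \ref{hoddichotomy}, whose proof is only cited), and (b) running L\'evy--Solovay over $W$ only above a fixed bound $\theta$ on the two Vop\v{e}nka forcings. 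This is exactly the bookkeeping the paper's one-line proof presupposes, so your version is the more honest one. Two small corrections: the filters $H_0,H_1$ are generic \emph{over} $W$, not elements of $W$; and the downward direction of the absoluteness of extendibility (that small forcing creates no new extendible cardinals) is not literally L\'evy--Solovay but a Hamkins-style theorem resting on the approximation and covering machinery of Theorem \ref{hlw} --- though the paper glosses over this direction entirely, so you are, if anything, more careful than the source.
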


\begin{proof}
If $\mathbb{P}$ is ordinal definable and weakly homogeneous,
then it is clear from Corollary \ref{hodgen}
that the HOD conjecture is absolute between $V$ and $V[G]$.
Now consider the general case.
Take $\kappa < \delta$ an inaccessible cardinal such that
$\mathbb{P} \in V_\kappa$.
Let $J$ be a $V[G]$-generic filter on $\mathrm{Coll}(\omega,\kappa)$
and $I$ be a $V$-generic filter on $\mathrm{Coll}(\omega,\kappa)$
such that $V[G][I] = V[J]$.
Now $\mathrm{Coll}(\omega,\kappa)$ is ordinal definable and weakly homogeneous so the HOD conjecture is absolute between $V$ and $V[J]$,
as well as between $V[G]$ and $V[G][I]$.
Therefore,
it is absolute between $V$ and $V[G]$.
\end{proof}

\section{The HOD Conjecture} \label{hod}

The official HOD Conjecture is closely related to the conjecture we have been contemplating
for two sections.  Intuitively, it also says that HOD is not far from $V$,
which will turn out to mean that they are close.
The HOD Conjecture involves a new concept, which we define first.

\begin{definition}
Let $\lambda$ be an uncountable regular cardinal.
Then $\lambda$ is {\em $\omega$-strongly measurable in $\HOD$} iff
there is $\kappa < \lambda$ such that
\begin{enumerate}
\item $(2^\kappa)^{\HOD} <\lambda$ and
\item there is no partition $\langle S_\alpha \mid  \alpha <  \kappa \rangle$ 
of $\cof(\omega)\cap \lambda$ into stationary sets such that  
 $\langle S_\alpha \mid  \alpha <\kappa \rangle \in \HOD$.
\end{enumerate}
\end{definition}

\begin{lemma}\label{omega}
Assume $\lambda$ be $\omega$-strongly measurable in $\mathrm{HOD}$.
Then $$\mathrm{HOD} \models
\lambda \mbox{ is a measurable cardinal.}$$
\end{lemma}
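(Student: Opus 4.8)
The plan is to produce, inside $\HOD$, a $\lambda$-complete nonprincipal ultrafilter on $\lambda$. Let $S=\{\xi<\lambda:\cf(\xi)=\omega\}$; this is a set of ordinals and hence a member of $\HOD$. Let $\mathcal I$ be the collection of $B\in\mathcal P(S)\cap\HOD$ that are nonstationary in $V$. Since $\lambda$ is regular in $V$, the nonstationary ideal is $\lambda$-complete, so $\mathcal I$ is a $\lambda$-complete ideal on the Boolean algebra $\mathcal P(S)\cap\HOD$. The whole construction rests on finding an \emph{atom} for $\mathcal I$: a set $S_0\in\HOD$, stationary in $V$, such that no partition $S_0=B_0\sqcup B_1$ lying in $\HOD$ has both halves stationary in $V$. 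Granting such an $S_0$, I would define
$$U=\{A\in\mathcal P(\lambda)\cap\HOD : A\cap S_0 \text{ is stationary in } V\}$$
and claim that $U$ witnesses the measurability of $\lambda$ in $\HOD$.

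First I would check that $U$ is a $\lambda$-complete nonprincipal ultrafilter on $\mathcal P(\lambda)\cap\HOD$. It is an ultrafilter because for $A\in\HOD$ the sets $A\cap S_0$ and $S_0\setminus A$ partition $S_0$, so by the atom property exactly one is stationary; it is nonprincipal because singletons are nonstationary. For $\lambda$-completeness, given $\langle A_i\mid i<\rho\rangle\in\HOD$ with $\rho<\lambda$ and each $A_i\in U$, each $S_0\setminus A_i$ is nonstationary, so $\bigcup_{i<\rho}(S_0\setminus A_i)$ is a union of fewer than $\lambda$ nonstationary sets and hence nonstationary; thus $S_0\cap\bigcap_i A_i$ is stationary and $\bigcap_i A_i\in U$. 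The essential point is that $U$ itself belongs to $\HOD$: its defining formula uses only the ordinal parameters defining $S_0$ and $\lambda$ together with the notion of stationarity in $V$, so $U$ is ordinal definable, while every element of $U$ is a subset of $\lambda$ lying in the transitive class $\HOD$ and is therefore hereditarily ordinal definable. An ordinal definable set all of whose members lie in $\HOD$ is itself in $\HOD$, so $U\in\HOD$. Then $\HOD\models$``$U$ is a $\lambda$-complete nonprincipal ultrafilter on $\lambda$'', which is exactly the assertion that $\lambda$ is measurable in $\HOD$.

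The main obstacle is the existence of the atom $S_0$, equivalently that the quotient $(\mathcal P(S)\cap\HOD)/\mathcal I$ is not atomless, and this is precisely where both hypotheses are needed. The strategy is to argue by contradiction: if every $V$-stationary $B\in\HOD$ with $B\subseteq S$ split in $\HOD$ into two $V$-stationary pieces, I would manufacture a partition of $S$ into $\kappa$ pieces, each stationary in $V$ and with the sequence lying in $\HOD$, contradicting clause (2). The engine for producing many pieces is a Solovay-style splitting: a ladder system on a $V$-stationary subset of $S$ that is definable in $\HOD$ would, via Fodor's lemma applied in $V$, already yield such a partition, so clause (2) is in effect the statement that $\HOD$ carries no such splitting. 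To run the argument one refines partitions along a tree, and here $(2^\kappa)^{\HOD}<\lambda$ is used to keep the number of pieces below $\lambda$ at every stage, so that at limit stages the $\lambda$-completeness of $\mathcal I$ lets one discard the fewer-than-$\lambda$ many pieces that have become nonstationary and still retain a maximal antichain, while clause (2) caps the process below $\kappa$. The delicate step, which I expect to be the crux, is to guarantee that this refinement terminates at an unsplittable piece rather than running into an atomless limit, balancing the width of the tree (bounded by $(2^\kappa)^{\HOD}$) against the completeness of the ideal and the restriction to cofinality $\omega$.
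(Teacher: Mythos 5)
Your first half is correct and is essentially the paper's argument: reduce everything to the existence of an ``atom'' $S_0\in\HOD$ (a stationary set admitting no $\HOD$-partition into two $V$-stationary pieces), and then take the trace on $\HOD$ of the club filter restricted to $S_0$. Your verification that the resulting $U$ is a $\lambda$-complete nonprincipal ultrafilter on $\mathcal P(\lambda)\cap\HOD$ and that $U\in\HOD$ (since $S_0\in\HOD$ is itself ordinal definable and every element of $U$ lies in $\HOD$) is sound; the paper phrases this with $\mathcal G=\mathcal F\cap\HOD$ for $\mathcal F$ the club filter restricted to $S_0$, which is the same object.

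The gap is in the existence of the atom, and you have flagged it yourself. Your framing of the ``delicate step'' --- guaranteeing that the refinement ``terminates at an unsplittable piece rather than running into an atomless limit'' --- misidentifies what must be shown. In the contradiction argument you assume there is \emph{no} atom, so by hypothesis every stationary $\HOD$-piece splits and the refinement never terminates at an unsplittable piece; the contradiction comes precisely from carrying the construction all the way to height $\kappa+1$. Concretely, one builds a tree $T\subseteq{}^{\le\kappa}2$ and a sequence $\langle S_r\mid r\in T\rangle\in\HOD$ (using a wellordering in $\HOD$ of the $V$-stationary $\HOD$-subsets of $\lambda$ to make the splitting choices uniformly, so the whole sequence lands in $\HOD$), splitting at successors and intersecting along branches at limits, maintaining the invariant that at each limit $\beta\le\kappa$ the surviving intersections cover $\cof(\omega)\cap\lambda$ modulo a nonstationary set; since there are at most $(2^\kappa)^{\HOD}<\lambda$ candidate branches in $\HOD$ and the club filter on $\lambda$ is $\lambda$-complete, at least one branch survives at every limit stage, including stage $\kappa$ itself. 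The step you are missing is the extraction of the forbidden partition: for any surviving $r$ with $\mathrm{dom}(r)=\kappa$, the sets $S_{r\upharpoonright(\alpha+1)}\setminus S_{r\upharpoonright\alpha}$ for $\alpha<\kappa$ are pairwise disjoint, stationary, and form a sequence in $\HOD$ partitioning $S_r$; padding with the complement of $S_r$ gives a partition of $\cof(\omega)\cap\lambda$ into $\kappa$ stationary sets in $\HOD$, contradicting clause (2) of $\omega$-strong measurability. So clause (2) does not ``cap the process below $\kappa$''; it is violated by the completed process, and $(2^\kappa)^{\HOD}<\lambda$ is exactly what keeps the tree's width below the completeness of the club filter at every limit stage. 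The Fodor/ladder-system remark is not needed and does not substitute for this step.
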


\begin{proof}
We claim that there exists a stationary set $S \subseteq \mathrm{cof}(\omega) \cap \lambda$
such that $S \in \mathrm{HOD}$ and there is no partition of $S$
into two stationary sets that belong to $\mathrm{HOD}$.

First, let us see how to finish proving the lemma based on the claim.
Let $\mathcal{F}$ be the club filter restricted to $S$.  That is,
$$\mathcal{F} = 
\{ X \subseteq S \mid \mbox{ there is a club $C$ such that $X \supseteq C \cap S$}\}.$$
Let $\mathcal{G} = \mathcal{F} \cap \mathrm{HOD}$.
Clearly,
$\mathcal{G} \in \mathrm{HOD}$
and
$$\mathrm{HOD}
\models 
\mbox{$\mathcal{G}$ is a $\lambda$-complete filter on 
$\mathcal{P}(S)$.}$$
By the claim,
$$\mathrm{HOD}
\models 
\mbox{$\mathcal{G}$ is an ultrafilter on 
$\mathcal{P}(S)$.}$$

Now we prove the claim by contradiction.
Fix a cardinal
$\kappa < \lambda$ such that
$(2^\kappa)^\mathrm{HOD} <\lambda$ and
there is no partition
$\langle S_\alpha \mid  \alpha <  \kappa \rangle$ of $\cof(\omega)\cap \lambda$ 
into stationary sets such that  
$\langle S_\alpha \mid  \alpha < \kappa \rangle \in \mathrm{HOD}$.
This allows us to define
a subtree $T$ of ${}^{\le \kappa} 2$ with height $\kappa+1$
and a sequence
$\langle S_r \mid r \in T \rangle$ that belongs to  $\mathrm{HOD}$
such that
\begin{enumerate}
\item $S_{\langle \, \rangle} = \mathrm{cof}(\omega) \cap \lambda$,
\item For every $r \in T$,
\begin{enumerate}
\item $S_r$ is stationary,
\item $r {}^\frown \langle 0 \rangle$ and $r {}^\frown \langle 0 \rangle$ belong to $T$,
\item$S_r$ is the disjoint union of  $S_{r {}^\frown \langle 0 \rangle}$ and $S_{r {}^\frown \langle 1 \rangle}$, and
\item if $\mathrm{dom}(r)$ is a limit ordinal,
then $S_r = \bigcap \{ S_{r\upharpoonright \alpha} \mid \alpha \in \mathrm{dom}(r) \}$.
\end{enumerate}
\item For every limit ordinal $\beta \le \kappa$ and $r \in {}^\beta 2 - T$,
if $r \upharpoonright \alpha \in T$ for every $\alpha < \beta$,
then $\bigcap_{\alpha<\beta} S_{r\upharpoonright \alpha}$ is non-stationary.
\end{enumerate}
First notice that $\cof(\omega)\cap \lambda$ belongs to HOD
even though it might mean something else there.
Also, 
$\{ S \subseteq  \lambda \mid S \in \mathrm{HOD}
\mbox{ and $S$ is stationary}\}$ belongs to $\mathrm{HOD}$
even through there might be sets which are stationary in  HOD but not actually stationary.
In any case, HOD can recognise when a given $S \in \mathrm{HOD}$
is stationary in $V$ and,
by the putative failure of the claim,
choose a partition of $S$ into two sets which are again 
stationary in $V$.
This choice is done in a uniform way
using a wellordering of 
$$\{ S \subseteq  \lambda \mid S \in \mathrm{HOD} \mbox{ and $S$ is stationary}\}$$ 
in HOD.
This gets us through successor stages of the construction.
Suppose that $\beta \le \kappa$ is a limit ordinal and that
we have already constructed in $\HOD$
$\langle S_r \mid r \in T \cap {}^{<\beta} 2 \rangle$.
By (3) we have recursively maintained that,
except for a non-stationary set,
$\cof(\omega)\cap \lambda$ equals
$$\bigcup \left\{ \bigcap \left\{ S_{r\upharpoonright\alpha}
\mid \alpha < \beta \right\} \mid  
\mbox{$r$  is a $\beta$-branch of $T\cap {}^{<\beta} 2$ and $r \in \mathrm{HOD}$} \right\}.$$
Since the club filter over $\lambda$ is $\lambda$-complete and $| {}^\beta 2|^\mathrm{HOD} < \lambda$,
there exists at least one such $r$ for which
the corresponding intersection is stationary.
We put $r \in T \cap {}^\beta 2$ and define
$S_r = \bigcap \left\{ S_{r\upharpoonright\alpha}
\mid \alpha < \beta \right\}$  in this case.
That completes the construction.
Now take any $r \in T$ with $\mathrm{dom}(r) = \kappa$.
Then
$S_r$ is the disjoint union of the stationary sets
$S_{r\upharpoonright(\alpha + 1)} - S_{r\upharpoonright\alpha}$
for $\alpha < \kappa$.
This readily contradicts our choice of $\kappa$.\end{proof}

\begin{definition}\label{HODConjecture}
The {\em $\mathrm{HOD}$ Conjecture} is the statement:
\begin{quote}
There is a proper class of regular cardinals
that are not $\omega$-strongly measurable in $\mathrm{HOD}$.
\end{quote}
\end{definition}

It turns out that if $\delta$ is an extendible cardinal,
then the HOD Conjecture is equivalent to the failure of clause (2)
of the dichotomy, Theorem~\ref{hoddichotomy},
which is the conjecture we discussed in the previous two sections. 
In particular, a model in which the HOD conjecture fails cannot be obtained by forcing.
It is clear that if $\mathrm{HOD}$ is correct about singular cardinals and computes their successors correctly  (clause (1) of Theorem \ref{hoddichotomy}) then 
the $\HOD$ Conjecture holds, as 
$$\{\gamma^+\mid\gamma\in \mbox{On and } \gamma \mbox{ is a singular cardinal}\}$$ is a proper class of regular cardinals which are not $\omega$-strongly measurable in $\mathrm{HOD}$.

We close this section with additional
remarks on the status of the HOD Conjecture.
\begin{itemize}
\item[(i)] It is not known whether more than 3 regular cardinals which are $\omega$-strongly measurable in HOD can exist.
\item[(ii)] Suppose $\gamma$ is a singular cardinal, $\cof(\gamma)>\omega$ and $\vert V_\gamma\vert=\gamma$. It is not known whether 
$\gamma^+$ can be $\omega$-strongly measurable in HOD.
\item[(iii)] Let $\delta$ be a supercompact cardinal. It is not known
whether any regular cardinal above $\delta$ can be $\omega$-strongly measurable in HOD.
\end{itemize}

\section{Supercompactness} \label{super}

Recall that
a cardinal $\delta$ is 
$\gamma$-supercompact iff there is a transitive class $M$
and an elementary embedding $j : V \to M$ such that $\mathrm{crit}(j) = \delta$,
$j(\delta) > \gamma$ and ${}^\gamma M \subseteq M$.
Also,
$\delta$ is a supercompact cardinal
iff $\delta$ is $\gamma$-supercompact cardinal for every $\gamma > \delta$.
If $\delta$ is an extendible cardinal,
then $\delta$ is supercompact
and
$\{ \alpha < \delta \mid \alpha \mbox{ is supercompact}\}$ is stationary in $\delta$ (see \cite{Kanamori} Theorem 23.7).

There is a standard first-order way to express  supercompactness in terms of
measures, which we review.
First suppose that $j : V \to M$ witnesses that $\delta$ is a $\gamma$-supercompact.
Observe that $j[\gamma] \in M$.  If we define
$$\mathcal{U} = \{ X \subseteq \mathcal{P}_\delta(\gamma) \mid j[\gamma] \in j(X) \},$$
then
$\mathcal{U}$ is a $\delta$-complete ultrafilter on $\mathcal{P}_\delta (\gamma)$.
Moreover,
$\mathcal{U}$ is {\em normal} in the sense that if $X \in \mathcal{U}$
and $f$ is a choice function for $X$,
then there exists $Y \in \mathcal{U}$ and $\alpha <  \gamma$
such that $Y \subseteq X$ and $f(\sigma) = \alpha$ for every $\sigma \in Y$.
Equivalently,
if $\langle X_\alpha \mid \alpha < \gamma \rangle$ is a sequence of sets from $\mathcal{U}$,
then the diagonal intersection,
$$\displaystyle\operatorname*{\Delta}_{\alpha < \gamma} \ X_\alpha = \{ \sigma \in \mathcal{P}_\delta (\gamma) \mid
\sigma \in X_\alpha \mbox{ for every } \alpha \in \sigma \}$$
also belongs to $\mathcal{U}$.
In addition,
$\mathcal{U}$ is {\em fine} in the sense that 
for every $\alpha < \gamma$,
$$\{ \sigma \in \mathcal{P}_\delta (\gamma) \mid \alpha \in \sigma \} \in \mathcal{U}.$$

Suppose, instead, that we are given
a $\delta$-complete ultrafilter $\mathcal{U}$ on $\mathcal{P}_\delta(\gamma)$
which is both normal and fine.  Then the ultrapower map derived from $\mathcal{U}$ can be shown to witness that $\delta$ is a $\gamma$-supercompact cardinal.
We might refer to such an ultrafilter (fine,  normal and $\delta$-complete) as a \textit{$\gamma$-supercompactness measure}.

Less well-known is the following characterisation of $\delta$ being supercompact
that more transparently related to extendibility.

\begin{theorem}[Magidor] \label{magidor}
A cardinal $\delta$ is supercompact iff for all  $\kappa > \delta$ and $a \in V_\kappa$,
there exist 
$\bar \delta < \bar \kappa < \delta$, $\bar a \in V_{\bar \kappa}$
and an elementary embedding
$ j: V_{{\bar \kappa} +1} \to V_{\kappa+1}$
such that
$\mathrm{crit}(j) = \bar \delta$, $j(\bar \delta ) = \delta$ and $j ( \bar a ) = a$. 
\end{theorem}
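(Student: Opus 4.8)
The plan is to prove the two implications separately. For the forward direction I will read the Magidor condition off a single supercompactness embedding together with a reflection step, and for the reverse direction I will build a supercompactness measure for the \emph{reflected} pair and then transfer it upward along the given embedding.

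\textbf{Forward direction.} Assume $\delta$ is supercompact and fix $\kappa > \delta$ and $a \in V_\kappa$. Let $\gamma \ge \beth_{\kappa+1}$ and take $j : V \to M$ witnessing $\gamma$-supercompactness of $\delta$, so $\crit(j) = \delta$, $j(\delta) > \gamma \ge \kappa$ and ${}^\gamma M \subseteq M$. The closure of $M$ gives two facts: $V_{\kappa+1}^M = V_{\kappa+1}$ (each subset of $V_\kappa$ is coded by a $\gamma$-sequence), and the restriction $k := j \upharpoonright V_{\kappa+1}$, being a function of size at most $\gamma$, is a member of $M$. Since $\crit(j) = \delta > \kappa$, this $k$ is an elementary map $V_{\kappa+1} \to V_{j(\kappa)+1}^M$ with $\crit(k) = \delta$, $k(\delta) = j(\delta)$, $k(\kappa) = j(\kappa)$, $k(a) = j(a)$, and $\delta < \kappa < j(\delta)$. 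Writing $\varphi(d,c,x)$ for the assertion ``there exist $\bar\delta < \bar\kappa < d$, $\bar a \in V_{\bar\kappa}$ and an elementary $h : V_{\bar\kappa+1} \to V_{c+1}$ with $\crit(h) = \bar\delta$, $h(\bar\delta) = d$ and $h(\bar a) = x$'' (which is first-order via the satisfaction predicate for set structures), the map $k$ witnesses $M \models \varphi(j(\delta), j(\kappa), j(a))$. Pulling this back through the elementary $j$ yields $V \models \varphi(\delta, \kappa, a)$, which is exactly the Magidor condition.

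\textbf{Reverse direction.} Assume the condition and fix $\gamma > \delta$; I will produce a normal, fine, $\delta$-complete ultrafilter on $\mathcal{P}_\delta(\gamma)$. Choose $\kappa > \gamma$ large (a limit well above $\gamma$, with $a$ coding $\gamma$ together with a tall initial segment such as $V_{\gamma+\omega}$, so that the reflected height is forced to be large) and apply the condition to obtain $\bar\delta < \bar\gamma < \bar\kappa < \delta$ and $j : V_{\bar\kappa+1} \to V_{\kappa+1}$ with $\crit(j) = \bar\delta$, $j(\bar\delta) = \delta$ and $j(\bar\gamma) = \gamma$. Because $\crit(j) = \bar\delta \ne \delta$, one cannot read a measure on $\mathcal{P}_\delta(\gamma)$ directly off $j$; instead I form the seed $s := j[\bar\gamma] = \{ j(\xi) \mid \xi < \bar\gamma \}$, which lies in $\mathcal{P}_\delta(\gamma)$ since $|s| = |\bar\gamma| < \delta$, and set
$$\mathcal{U} = \{ X \subseteq \mathcal{P}_{\bar\delta}(\bar\gamma) \mid X \in V_{\bar\kappa+1} \mbox{ and } s \in j(X) \}.$$
The usual seed computations (using $\crit(j) = \bar\delta$ for $\bar\delta$-completeness, the equivalence $\xi \in \sigma$ for fineness, and a regressive-function argument for normality) show $\mathcal{U}$ is a $\bar\gamma$-supercompactness measure for $\bar\delta$. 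Now transfer: having made $\bar\kappa$ large relative to $\bar\gamma$, the measure $\mathcal{U}$ is itself in $V_{\bar\kappa+1}$, and the defining clauses of ``normal, fine, $\bar\delta$-complete ultrafilter on $\mathcal{P}_{\bar\delta}(\bar\gamma)$'' are absolute between $V_{\bar\kappa+1}$ and $V$, so $V_{\bar\kappa+1} \models \psi(\bar\delta, \bar\gamma)$, where $\psi(d,g)$ asserts the existence of such a measure on $\mathcal{P}_d(g)$. Applying $j$ gives $V_{\kappa+1} \models \psi(\delta, \gamma)$, and since $\kappa$ is large the same absoluteness gives $V \models \psi(\delta, \gamma)$, i.e. $\delta$ is $\gamma$-supercompact. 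As $\gamma$ was arbitrary, $\delta$ is supercompact.

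\textbf{Main obstacle.} I expect the bookkeeping in the reverse direction to be the delicate part: one must check that ``being a $\gamma$-supercompactness measure'' is computed correctly inside $V_{\bar\kappa+1}$ and $V_{\kappa+1}$ (which dictates the choice of $a$ and $\kappa$ needed to make $\bar\kappa$ exceed $\bar\gamma$ by enough that $\mathcal{U} \in V_{\bar\kappa+1}$) and that the statement transported by $j$ is $\gamma$-supercompactness of $\delta$ rather than of $\bar\delta$. The conceptual pitfall to avoid is trying to define a measure on $\mathcal{P}_\delta(\gamma)$ straight from $j$; the construction must pass through the reflected pair and only then be pushed up by elementarity.
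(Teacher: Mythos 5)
Your proof is correct and follows essentially the same route as the paper: the forward direction restricts a $|V_{\kappa+1}|$-supercompactness embedding to $V_{\kappa+1}$, notes the restriction lies in $M$, and pulls the witnessed statement back by elementarity; the reverse direction derives the measure $\mathcal{U}$ on $\mathcal{P}_{\bar\delta}(\bar\gamma)$ from the seed $j[\bar\gamma]$ and pushes it up through $j$ (the paper transports the measure itself as $j(\bar{\mathcal{U}})$, you transport the existential statement, which is the same move). The only cosmetic difference is that the paper arranges $j(\bar\gamma)=\gamma$ by taking $\kappa=\gamma+\omega$ rather than coding $\gamma$ into $a$.
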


\begin{proof}
First we prove the forward direction.
Given $\kappa$ and $a$,
let $\gamma = |V_{\kappa + 1}|$
and $j : V \to M$ witness that $\delta$ is a $\gamma$-supercompact cardinal.
Then
$$j \upharpoonright V_{\kappa + 1} \in M$$
and witnesses the following sentence in $M$:
``There exist 
$\bar \delta < \bar \kappa < j(\delta)$, $\bar a \in V_{\bar \kappa}$
and an elementary embedding
$i: V_{{\bar \kappa} +1} \to V_{j(\kappa)+1}$
such that
$\mathrm{crit}(i) = \bar \delta$, $i(\bar \delta ) = j(\delta)$ and $i ( \bar a ) = j(a)$.''
Since $j$ is elementary, we are done.

For  the reverse direction,
let $\gamma > \delta$ be given.
Apply the right side with $\kappa = \gamma + \omega$.
(The choice of $a$ is irrelevant.)
This yields
$\bar \kappa $, $\bar \delta$ and $j$ as specified.
Take $\bar\gamma$ such that $j(\bar \gamma) = \gamma$.
Now $j[\bar{\gamma}] \in V_{\kappa+1}$ so it induces 
a normal fine ultrafilter $\bar{\mathcal{U}}$ on $\mathcal{P}_{\bar{\delta}}(\bar{\gamma})$.
Observe that $\bar{\mathcal{U}} \in V_{\bar{\gamma}+\omega}$, so we can define $\mathcal{U}=j(\bar{\mathcal{U}})$. Then, by elementarity, $V_{\kappa+1}$ believes that $\mathcal{U}$ is a normal fine ultrafilter on $\mathcal{P}_{\delta}(\gamma)$, and is large enough to bear true witness to such a belief. Thus $\delta$ is $\gamma$-supercompact.
\end{proof}

We will use the Solovay splitting theorem. A proof can be found in \cite{Jech} Theorem 8.10 or, using generic embeddings, in \cite{Jech} Lemma 22.27.

\begin{theorem}[Solovay]\label{solovaysplitting}
Let $\gamma$ be a regular uncountable cardinal.
Then every stationary subset of $\gamma$ 
can be partitioned into $\gamma$ many stationary sets.
\end{theorem}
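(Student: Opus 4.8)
The plan is to produce a single regressive function on $S$ possessing $\gamma$ many stationary fibers, since the fibers then furnish the required disjoint stationary sets, and absorbing the leftover into one block turns this into a genuine partition into exactly $\gamma$ pieces. The two tools I would lean on throughout are Fodor's pressing-down lemma and the fact that a diagonal intersection of $\gamma$ many clubs in a regular $\gamma$ is again a club.

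First I would discard the non-limit ordinals, which form a non-stationary set, so that $S$ consists of limit ordinals. Next, since $\alpha \mapsto \cf(\alpha)$ is regressive on those limit ordinals that are not themselves regular, Fodor's lemma yields a stationary $S' \subseteq S$ and a fixed regular $\mu < \gamma$ with $\cf(\alpha) = \mu$ for all $\alpha \in S'$; splitting $S'$ clearly suffices. (In the use made of this theorem in Lemma~\ref{omega} one has $\mu = \omega$ from the outset.) For each $\alpha \in S'$ I would fix an increasing cofinal map $c_\alpha : \mu \to \alpha$ and, for each $\nu < \mu$, define the globally regressive function $f_\nu(\alpha) = c_\alpha(\nu)$.

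The heart of the argument is then a proof by contradiction. Suppose no regressive function on $S'$ has $\gamma$ many stationary fibers; in particular, for each $\nu < \mu$ the set $T_\nu = \{\eta < \gamma : f_\nu^{-1}(\{\eta\}) \text{ is stationary}\}$ has size $<\gamma$ and so is bounded by some $\rho_\nu < \gamma$. Put $\rho = \sup_{\nu < \mu} \rho_\nu$, which stays below $\gamma$ because $\mu < \gamma$ and $\gamma$ is regular. For each $\nu < \mu$ and each $\eta \ge \rho$ the fiber $f_\nu^{-1}(\{\eta\})$ is non-stationary, so I would choose a club $C_{\nu,\eta}$ disjoint from it and form the club $D = \{\beta < \gamma : \forall \nu < \mu\ \forall \eta < \beta\,(\eta \ge \rho \to \beta \in C_{\nu,\eta})\}$, which is a $\mu$-sized intersection of diagonal intersections and hence club. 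Picking $\alpha \in S' \cap D$ with $\alpha > \rho$, cofinality of $c_\alpha$ gives a $\nu < \mu$ with $\eta := c_\alpha(\nu) > \rho$; membership in $D$ then forces $\alpha \in C_{\nu,\eta}$, which contradicts $f_\nu(\alpha) = \eta$. Thus some $f_\nu$ does have $\gamma$ many stationary fibers, and Solovay's theorem follows.

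The hard part will be the reduction to a fixed cofinality when $\gamma$ is Mahlo, that is, when the set of regular $\alpha \in S$ is itself stationary: there $\alpha \mapsto \cf(\alpha)$ fails to be regressive and the coordinate functions degenerate. I would treat this residual case by a separate pressing-down argument applied to chosen cofinal sequences below each such $\alpha$, or else simply cite \cite{Jech} Theorem~8.10; for every application of Theorem~\ref{solovaysplitting} in this paper only the clean case $\mu = \omega$ is needed.
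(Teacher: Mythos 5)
The paper does not actually prove Theorem \ref{solovaysplitting}; it only cites \cite{Jech} Theorem 8.10 (or the generic-embedding argument in \cite{Jech} Lemma 22.27), so there is no in-paper proof to compare yours against. What you give is essentially Jech's argument with a nice compression: where Jech applies Fodor twice (once to find a coordinate $\nu$ whose values are unbounded on stationary sets, and once to stabilise that coordinate), you fold both steps into a single diagonal-intersection contradiction, and the details check out --- $\rho<\gamma$ by regularity, $D$ is a club, and any limit $\alpha\in S'\cap D$ above $\rho$ yields the contradiction, so some $f_\nu$ has $\gamma$ many pairwise disjoint stationary fibers. The one genuine gap is exactly the one you name: if $\{\alpha\in S:\cf(\alpha)<\alpha\}$ is non-stationary, then $S$ is (modulo a non-stationary set) a stationary set of regular cardinals, $\gamma$ is weakly Mahlo, Fodor cannot be applied to $\cf$, and your coordinate functions are unavailable. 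That case does require a further idea --- the standard one being that $T=\{\alpha\in S: S\cap\alpha \mbox{ is non-stationary in } \alpha\}$ is stationary (take the least element of $S$ among limit points of a given club), after which one runs your argument on the $\xi$-th elements of clubs $c_\alpha\subseteq\alpha$ witnessing non-stationarity, with the diagonalisation now ranging over $\xi<\gamma$ as well. Since you defer that case to the very reference the paper itself cites for the whole theorem, and since every application of Theorem \ref{solovaysplitting} in this paper is to $\cof(\omega)\cap\gamma$ (in Theorem \ref{solovayset}; Lemma \ref{omega} does not in fact invoke the splitting theorem, so your parenthetical there is slightly misplaced), your argument fully covers what the paper needs; only the claim of the theorem in full generality is left resting on the citation.
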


We will also make key use of the 
following theorem, which provides a single set
that belongs to every $\gamma$-supercompactness measure on $\mathcal{P}_{\delta} (\gamma)$.
We will refer to this set as the {\em Solovay set}.

\begin{theorem}[Solovay] \label{solovayset}
Let $\delta$ be supercompact and $\gamma > \delta$ be regular. Then there exists an $X \subseteq \mathcal{P}_{\delta} (\gamma)$ such that the $\sup$ function is injective on $X$ and every $\gamma$-supercompactness measure contains $X$.
\end{theorem}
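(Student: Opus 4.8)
The plan is to pass to the ultrapower, reduce the statement to a combinatorial problem about cofinal subsets of ordinals below $\gamma$, and then solve that problem uniformly in the measure. Fix an arbitrary $\gamma$-supercompactness measure $\mathcal{U}$ and let $j : V \to M$ be its ultrapower embedding. The seed $[\id]_{\mathcal{U}}$ is exactly $j[\gamma]$, and since $\mathcal{U}$ is a $\gamma$-supercompactness measure we have $j[\gamma] \in M$ and $M$ is correct about cofinalities up to $\gamma$ (in particular $\gamma$ stays regular in $M$). Writing $\eta^{\ast} = \sup j[\gamma]$, I would first record the routine facts that $\eta^{\ast}$ is a limit ordinal with $\eta^{\ast}\notin j[\gamma]$, that $\otp^{M}(j[\gamma]) = \gamma$, and that $\cf^{M}(\eta^{\ast}) = \gamma$. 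By {\L}o\'s's theorem these translate into the assertion that
$$X_{0} = \{\sigma \in \mathcal{P}_{\delta}(\gamma) \mid \sup\sigma \notin \sigma,\ \sup\sigma \text{ a limit},\ \otp(\sigma) = \cf(\sup\sigma)\}$$
belongs to $\mathcal{U}$; as $\mathcal{U}$ was arbitrary, $X_{0}$ lies in every $\gamma$-supercompactness measure. The content of $X_{0}$ is that each of its elements is a cofinal subset of its own supremum of the least possible order type.

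It then remains to thin $X_{0}$ to a set $X$ on which $\sup$ is injective while keeping $X$ in every measure. The natural device is to fix in advance, for each limit $\eta < \gamma$, a distinguished cofinal set $e_{\eta} \subseteq \eta$ of order type $\cf(\eta)$, and to put $X = \{\sigma \in X_{0} \mid \sigma = e_{\sup\sigma}\}$. On such an $X$ the map $\sup$ is injective essentially by definition, with inverse $\eta \mapsto e_{\eta}$. Translating the requirement $X \in \mathcal{U}$ back through {\L}o\'s, what must be arranged is
$$M \models j(\langle e_{\eta} \mid \eta < \gamma\rangle)_{\eta^{\ast}} = j[\gamma];$$
that is, the cofinal set the fixed family assigns to $\eta^{\ast}$ in $M$ must be exactly the seed $j[\gamma]$ (note that mere containment would not suffice, since a proper subset can share the same order type). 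So the whole theorem comes down to selecting a single family $\langle e_{\eta}\rangle$, definable in $V$, that predicts the seed correctly at the supremum point of every $\gamma$-supercompactness embedding at once.

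This uniform exact prediction is where I expect the real difficulty to lie; the surrounding reductions are routine manipulations with normality (diagonal intersections) and fineness. My plan for it is to build $\langle e_{\eta}\rangle$ with a strong guessing property, using the Solovay splitting theorem (Theorem~\ref{solovaysplitting}) to manufacture the stationary structure needed along each $\eta$, and then to check that the guess is inherited at the points $\eta^{\ast}$. The delicate step is making the guessing hold uniformly across \emph{all} measures rather than one at a time: here I would exploit the full supercompactness of $\delta$ via Magidor's characterisation (Theorem~\ref{magidor}), setting up the desired configuration at a reflected pair $\bar\delta < \bar\gamma < \delta$ where it can be verified directly, and transferring it upward along a reflecting embedding with $j(\bar\delta) = \delta$ and $j(\bar\gamma) = \gamma$. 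The hard part will be verifying that elementarity carries the guessing property to $\gamma$ and that the transferred family genuinely returns $j[\gamma]$ — and not merely some other cofinal set of the correct order type — at $\eta^{\ast}$ in every ultrapower; everything else is bookkeeping.
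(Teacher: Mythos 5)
You have set the problem up correctly and located the crux precisely: one needs a single $V$-definable assignment $\beta \mapsto \sigma_\beta$ of a small cofinal subset of $\beta$ to each relevant $\beta < \gamma$ such that, for every $\gamma$-supercompactness embedding $j : V \to M$, the image assignment evaluated at $\sup j[\gamma]$ returns exactly the seed $j[\gamma]$. But the proposal stops exactly there: the ``strong guessing property'' is named, not constructed, and that construction is the entire content of the theorem. The route you sketch for filling the gap --- reflecting the configuration to a pair $\bar\delta < \bar\gamma < \delta$ via Magidor's characterisation and transferring it upward --- does not engage with the actual difficulty: the family must be fixed in $V$ before any measure is chosen, and what has to be verified is a computation inside each ultrapower $M$ at the point $\sup j[\gamma]$, which is not of the form $j(\eta)$ and hence is not controlled by elementarity alone.

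The missing idea is stationary-set coding. Fix, by Theorem \ref{solovaysplitting}, a partition $\langle S_\alpha \mid \alpha < \gamma\rangle$ of $\cof(\omega)\cap\gamma$ into stationary sets, and for $\beta$ with $\omega < \cf(\beta) < \delta$ put $\sigma_\beta = \{\alpha < \beta \mid S_\alpha\cap\beta \mbox{ is stationary in }\beta\}$; then $|\sigma_\beta| \le \cf(\beta) < \delta$, and $X = \{\sigma_\beta \mid \sup(\sigma_\beta) = \beta\}$ works, with injectivity of $\sup$ on $X$ coming for free from $\sigma_\beta$ being a function of $\beta$ (your extra requirement that the guessed set have order type exactly $\cf(\eta)$ is an unnecessary constraint). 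The verification that $j[\gamma] = \{\alpha < \beta^* \mid M \models S^*_\alpha\cap\beta^* \mbox{ is stationary in } \beta^*\}$, where $\beta^* = \sup j[\gamma]$ and $S^* = j(S)$, rests on two facts your outline never touches: $j$ is continuous at ordinals of countable cofinality, so $j[\gamma]$ is an $\omega$-club in $\beta^*$ and the $j$-preimage of any $M$-club in $\beta^*$ is an $\omega$-club in $\gamma$, which gives that each $S^*_{j(\eta)}$ reflects to $\beta^*$; and conversely any $S^*_\alpha$ reflecting to $\beta^*$ must meet the $\omega$-club $j[\gamma]$, which is partitioned by the sets $j[S_\theta]$, forcing $\alpha = j(\theta) \in j[\gamma]$. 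Without this mechanism the proposal is a correct reduction followed by an acknowledged hole where the proof should be.
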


\begin{proof}
Let $\langle S_{\alpha} \,|\, \alpha < \gamma \rangle$ be a partition
of $\gamma \cap \cof(\omega)$ into stationary sets,
which exists by Theorem \ref{solovaysplitting}.
For $\beta < \gamma$ such that $\omega < \cf(\beta) < \delta$,
let $\sigma_\beta$ be the set of $\alpha < \beta$ such that $S_\alpha$ reflects to $\beta$.
In other words,
$$\sigma_\beta = \{\alpha < \beta \mid S_\alpha \cap \beta \mbox{ is stationary in } \beta \}.$$
Leave $\sigma_\beta$ undefined otherwise.
Note that it is not possible to partition $\beta$ into more that $\cf(\beta)$-many stationary sets, as can be seen by considering their restrictions to a club in $\beta$ of order type $\cf(\beta)$,
so, $\sigma_{\beta} \in \mathcal{P}_{\delta}(\gamma)$.
Define $X = \{\sigma_{\beta} \mid \sup(\sigma_{\beta})=\beta \}$.
Clearly,  the $\sup$ function is an injection on $X$ so given
$\mathcal{U}$ be a normal fine ultrafilter on 
$\mathcal{P}_\delta (\gamma)$ 
it remains to see that $X \in \mathcal{U}$. 
Let $j:V \to M$ be the embedding associated to $\mathcal{U}$.
In fact 
$\mathcal{U}$ is the corresponding ultrafilter derived from $j$,
so what we need to see is that
$$j[\gamma] \in j(X).$$ 
Let $\beta = \sup ( j[\gamma] )$
and
$$\langle S^*_\alpha \mid \alpha < j(\gamma) \rangle
= j ( \langle S_\alpha \mid \alpha < \gamma \rangle ).$$
Clearly,
$\beta < j(\gamma)$ and $\omega < \mathrm{cf}(\beta) < j(\delta)$,
so we are left to show that
$$j[\gamma]
=
\{ \alpha < \beta \mid
M \models 
S^*_\alpha \cap \beta \mbox{ is stationary in } \beta \}.$$

First we show containment  in the forward direction.
Consider any $\eta < \gamma$.
Then we want $S^*_{j(\eta)} = j(S_\eta)$ to be stationary.
Given $C$ a club subset of $\beta$ that belongs to $M$, 
define $D = \{ \alpha < \gamma \mid j(\alpha) \in C \}$.
Because $j$ is continuous at ordinals of countable cofinality,
$D$ is an $\omega$-club in $\gamma$.
But $S_\eta$ contains only ordinals of countable cofinality
and is stationary in $\gamma$ so
$S_\eta \cap D \not= \emptyset$.
Hence $j(S_\eta) \cap C \not= \emptyset$.

For containment in the reverse direction, consider any
$\alpha < \beta$ such that, in $M$,
$S^*_\alpha \cap \beta$
is stationary in $\beta$.
Working in $M$,
as $j[\gamma]$ is an $\omega$-club in $\beta$
and $S^*_\alpha$ contains only ordinals of countable cofinality,
there exists $\eta < \gamma$
such that
$j(\eta) \in S^*_\alpha$.
But $j[\gamma]$ is partitioned by the $j[S_\theta]$ for $\theta<\gamma$ so we can take $\theta<\gamma$ such that $j(\eta)\in j[S_\theta]\subseteq j(S_\theta)=S^*_{j(\theta)}$. This means $S^*_\alpha \cap S^*_{j(\theta)} \neq \emptyset$ so $\alpha = j(\theta)\in j[\gamma]$.
\end{proof}

\begin{remark}
The proof of Theorem \ref{solovayset} can be easily generalised to prove the following. Assume that $j:V\to M$ is a $\gamma$-supercompact embedding, where $\gamma$ is regular. Let $\kappa<\gamma$ be also regular,  $\beta=\sup j[\gamma]$ and $\tilde\beta=\sup j[\kappa]$. Then given a partition $\langle S_\alpha\mid \alpha<\kappa\rangle$ of $\cof(\omega)\cap \gamma$ into stationary sets, we have that 
$$j[\kappa]=\{\alpha\in \tilde \beta\mid S_\alpha^*\cap  \beta \mbox{ is stationary in } \beta \},$$ where $\langle S_\alpha^*\mid \alpha< j(\kappa)\rangle= j(\langle S_\alpha \mid \alpha < \kappa\rangle)$.
\end{remark}

\section{Weak Extender Models} \label{wem}

In inner model theory,
the word {\em extender} has taken on a very general meaning
as any object that captures the essence of a given large cardinal property.
Sometimes ultrafilters or systems of ultrafilters are used.
At other times, elementary embeddings or restrictions of elementary embeddings
are more relevant.
We have already seen two first-order ways to express supercompactness.
An easier example is measurability:
if $U$ is a normal measure on $\kappa$ and $j : V \to M$
is the corresponding ultrapower map,
then $U$ and $j \upharpoonright V_{\kappa+1}$
carry exactly the same information.

Building a canonical inner model with a supercompact cardinal has been a major open
problem in set theory for decades.
Canonical inner models for measurable cardinals were produced early on.  Letting
$U$ be a normal measure on $\kappa$ and setting $\bar U = U \cap L[U]$,
we can see that $\bar U \in L[U]$
and $L[U] \models \mbox{$\bar U$ is a normal measure on $\kappa$}$.
The general theory of $L[U]$ does not depend on there being measurable cardinals in $V$
but this was an important first step.

\begin{definition}\label{wemfdsc}
A transitive class $N$ model of ZFC is called a {\em weak extender model for $\delta$ supercompact} iff for every $\gamma>\delta$ there exists a normal fine measure $\mathcal{U}$ 
on $\mathcal{P}_\delta(\gamma)$ such that
\begin{enumerate}
\item $N\cap \mathcal{P}_\delta(\gamma)\in \mathcal{U}$ and
\item $\mathcal{U} \cap N \in N$.
\end{enumerate}
\end{definition}

The first condition says that $\mathcal{U}$ {\em concentrates} on $N$.
In the case of the measurable cardinal,
which we discussed above,
we get the analogous first condition for free because $L[U] \cap \kappa = \kappa \in U$.
We might refer to the second condition
as saying that $\mathcal{U}$ is {\em amenable} to $N$.

\begin{lemma} \label{deltacoverwem}If $N$ is a weak extender model for $\delta$ supercompact,  then it has the $\delta$-covering property. 
\end{lemma}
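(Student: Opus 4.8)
The plan is to reduce the covering property to the special case of sets of ordinals, and then to extract such covers directly from a single measure witnessing that $N$ is a weak extender model. Notably, I expect the whole statement to follow from the \emph{concentration} clause (1) of Definition \ref{wemfdsc} together with fineness and $\delta$-completeness of the measure; the amenability clause (2) should play no role here.

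First I would handle the reduction. Given $\sigma \subseteq N$ with $|\sigma| < \delta$, the elements of $\sigma$ have bounded rank, so $\sigma \subseteq (V_\gamma)^N = V_\gamma \cap N$ for some $\gamma > \delta$ (using that rank is absolute for the transitive model $N$). Working inside $N$, fix a bijection $b : \lambda \to (V_\gamma)^N$ with $b \in N$, where $\lambda = (|(V_\gamma)^N|)^N$. Then $b^{-1}[\sigma] \subseteq \lambda$ is a set of ordinals with $|b^{-1}[\sigma]| = |\sigma| < \delta$, since $b^{-1}$ is injective and hence preserves $V$-cardinality. If I can cover $b^{-1}[\sigma]$ by some $\tau_0 \in N$ with $|\tau_0| < \delta$, then $\tau = b[\tau_0] \in N$ satisfies $|\tau| < \delta$ and $\sigma = b[b^{-1}[\sigma]] \subseteq b[\tau_0] = \tau$. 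Hence it suffices to prove covering for sets of ordinals.

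Next I would prove covering for a set of ordinals $\sigma \subseteq \gamma$ with $|\sigma| < \delta$, where $\gamma > \delta$. By hypothesis there is a normal fine measure $\mathcal{U}$ on $\mathcal{P}_\delta(\gamma)$ with $N \cap \mathcal{P}_\delta(\gamma) \in \mathcal{U}$. Fineness gives $\{P \in \mathcal{P}_\delta(\gamma) \mid \alpha \in P\} \in \mathcal{U}$ for each $\alpha < \gamma$, and since $|\sigma| < \delta$ and $\mathcal{U}$ is $\delta$-complete, the intersection $\{P \in \mathcal{P}_\delta(\gamma) \mid \sigma \subseteq P\}$ lies in $\mathcal{U}$. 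Intersecting this with $N \cap \mathcal{P}_\delta(\gamma)$—which is also in $\mathcal{U}$ by clause (1)—yields a set in $\mathcal{U}$, in particular nonempty. Any $P$ in it is an element of $N$ with $|P| < \delta$ and $\sigma \subseteq P$, as required.

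The argument is deceptively short, and its one delicate point is the reduction: one must take the bijection $b$ \emph{inside} $N$ so that the final cover $b[\tau_0]$ is an element of $N$, and one must check that passing through $b^{-1}$ does not increase the $V$-cardinality of $\sigma$. The genuinely substantive content is the recognition that the concentration of $\mathcal{U}$ on $N$ is exactly what forces the cover supplied by fineness and $\delta$-completeness to live in $N$; no appeal to the ultrapower embedding or to amenability is needed.
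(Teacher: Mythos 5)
Your proof is correct and follows essentially the same route as the paper: reduce to sets of ordinals, then intersect the fineness/$\delta$-completeness cover set with $N\cap\mathcal{P}_\delta(\gamma)$, which lies in $\mathcal{U}$ by concentration, to find a cover inside $N$. Your observation that amenability is not needed is also accurate; the paper's proof likewise uses only clause (1) of the definition.
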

\begin{proof}
Note that it is enough to prove $\delta$-covering for sets of ordinals. Now, given $\tau\subseteq\gamma$ with $\vert \tau\vert<\delta$,
let $\mathcal{U}$ be a $\gamma$-supercompactness  measure such that $N\cap \mathcal{P}_\delta(\gamma)\in \mathcal{U}$ and $\mathcal{U}\cap N\in N$. By fineness,
for each $\alpha<\gamma$, we have that $\{\sigma\in \mathcal{P}_\delta(\gamma)\mid \alpha\in\sigma\}\in \mathcal{U}$.
Hence, as $\vert\tau\vert<\delta$, by $\delta$-completeness we have $\{\sigma \in \mathcal{P}_\delta(\gamma)\mid \tau\subseteq \sigma\}$ belongs to $\mathcal{U}$. 
Also as $N\cap \mathcal{P}_\delta(\gamma)\in \mathcal{U}$,
there is a $\sigma\in N\cap \mathcal{P}_\delta(\gamma)$ and $\sigma\supseteq \tau$ as desired. 

\end{proof}

\begin{lemma}\label{wemclose}
Suppose $N$ is a weak extender model for $\delta$ supercompact  and $\gamma>\delta$ is such that $N\models `\gamma \mbox{ is a regular cardinal' }$. Then $\vert \gamma\vert = \cf(\gamma)$.
\end{lemma}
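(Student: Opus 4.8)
The plan is to argue by contradiction. Assume $\gamma$ is regular in $N$ but $\cf(\gamma) < |\gamma|$, and write $\lambda = \cf(\gamma)$ and $\mu = |\gamma|$, so $\lambda < \mu$. I will manufacture a set $\tau \in N$ that is cofinal in $\gamma$ and has $|\tau|^V < \mu$; since the least ordinal of cardinality $\mu$ is $\mu$ itself, this forces $\otp(\tau) < \mu \le \gamma$, so $\tau$ witnesses $\cf^N(\gamma) < \gamma$, contradicting the regularity of $\gamma$ in $N$. Fixing a cofinal $C \subseteq \gamma$ of order type $\lambda$, the whole task reduces to covering $C$ by a member of $N$ of $V$-cardinality below $\mu$ — that is, to a covering statement \emph{without cardinal blow-up}.

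First I would dispose of the subcase $\lambda < \delta$ directly from Lemma \ref{deltacoverwem}. The cofinal set $C$ has size $\lambda < \delta$, so $\delta$-covering supplies $\tau \in N$ with $C \subseteq \tau$ and $|\tau|^V < \delta$. As $\delta$ is a cardinal in $V$, we get $\otp(\tau) < \delta \le \mu$, and $\tau$ is cofinal in $\gamma$; this already makes $\gamma$ singular in $N$. In particular this argument shows, with no further work, that $\gamma$ regular in $N$ forces $\cf(\gamma) \ge \delta$.

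For the main subcase $\delta \le \lambda < \mu$ I would bring in both clauses of Definition \ref{wemfdsc}. Fix a normal fine measure $\mathcal U$ on $\mathcal P_\delta(\gamma)$ with $N \cap \mathcal P_\delta(\gamma) \in \mathcal U$ and $\mathcal U \cap N \in N$, and let $j : V \to M = \mathrm{Ult}(V,\mathcal U)$. The concentration clause gives $j[\gamma] \in j(N \cap \mathcal P_\delta(\gamma)) \subseteq j(N)$. The amenability clause lets $N$ build its own ultrapower: $\mathcal U \cap N$ is, inside $N$, a normal fine $\delta$-complete measure on $(\mathcal P_\delta(\gamma))^N$, so $N^* = \mathrm{Ult}(N, \mathcal U \cap N)$ is an inner model of $N$ with $j_N : N \to N^*$ of critical point $\delta$, and $N^*$ is closed under $\gamma$-sequences in $N$. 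Hence $N^*$ and $N$ have the same subsets of $\gamma$, and $\gamma$ stays regular in $N^*$. Putting $\beta^* = \sup j_N[\gamma] < j_N(\gamma)$, the seed $j_N[\gamma]\in N^*$ is cofinal in $\beta^*$ of order type $\gamma$, so $\cf^{N^*}(\beta^*) = \gamma$; but $j_N[\gamma] \in V$ equally shows $\cf^V(\beta^*) = \cf^V(\gamma) = \lambda < \mu$. Thus $N^*$ overestimates the cofinality of $\beta^*$, and a contradiction will follow once I exhibit inside $N^*$ a cofinal subset of $\beta^*$ of order type below $\gamma$. One also checks that $N^*$ inherits $\delta$-covering in $V$: a short set to be covered has order type below $\crit(j_N) = \delta$, so the $N$-cover provided by Lemma \ref{deltacoverwem} is a $<\!\delta$-sequence and hence already lies in $N^*$.

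The hard part is exactly the step $\lambda \ge \delta$: a single application of $\delta$-covering is too coarse, since it bounds the $V$-cardinality of the cover only below $\delta$, whereas reassembling $\lambda$-many small covers generally takes the union outside the model. What is really required is the analogue for weak extender models of Jensen's strong covering, namely that any set of ordinals of size $\lambda$ is contained in a member of the model of size $\lambda < \mu$; this is strictly stronger than Lemma \ref{deltacoverwem} and is the crux of the argument. I expect to obtain it from the supercompactness structure rather than from covering alone: with $j[\gamma] \in j(N)$ the increasing enumeration of $j[\gamma]$ lies in $j(N)$, and the Solovay set of Theorem \ref{solovayset}, sharpened by the Remark following it — which recovers each image $j[\kappa]$ (for $\kappa < \gamma$ regular) from the stationary-reflection pattern of a partition at $\beta = \sup j[\gamma]$ — pins the relevant images down as elements of $j(N)$. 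Transporting this information back to cover $C$ inside $N$ while controlling the order type of the cover is the step I expect to be the main obstacle, and the delicate point within it is ensuring the partitions used are stationary in $V$, not merely in $N$.
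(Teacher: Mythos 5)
There is a genuine gap: you have correctly set up the problem, handled the easy subcase $\cf(\gamma)<\delta$ exactly as the paper does (via Lemma \ref{deltacoverwem}), and even named the right tool (the Solovay set of Theorem \ref{solovayset}), but you stop precisely at the step that constitutes the proof, explicitly flagging it as an obstacle you have not overcome. Moreover, the target you set yourself --- produce a single cofinal $\tau\in N$ of order type below $\gamma$, i.e.\ a ``strong covering'' statement --- is not what the actual argument delivers, and the detour through $N^*=\mathrm{Ult}(N,\mathcal U\cap N)$ does not lead anywhere.

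The missing idea is this: apply Theorem \ref{solovayset} \emph{inside} $N$ (legitimate since $N\models$ ZFC, $\delta$ is supercompact in $N$ by Definition \ref{wemfdsc}, and $\gamma$ is regular in $N$) to obtain $X\in N$ with $\sup$ injective on $X$ such that $X$ lies in every $\gamma$-supercompactness measure of $N$; by amenability $\mathcal U\cap N$ is such a measure in $N$, so $X\in\mathcal U\cap N\subseteq\mathcal U$. Now fix a club $D\subseteq\gamma$ in $V$ of order type $\cf(\gamma)$ and check that $A=\{\sigma\in\mathcal P_\delta(\gamma)\mid\sup(\sigma)\in D\}\in\mathcal U$ (because $\sup j[\gamma]\in j(D)$, as $j(D)$ is closed and $j[D]$ is unbounded in $\sup j[\gamma]$). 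Then $X\cap A\in\mathcal U$, so by fineness $\gamma=\bigcup(X\cap A)$. This is not a covering set in $N$ (indeed $D\notin N$ in general); it is a pure counting argument in $V$: injectivity of $\sup$ on $X$ gives $|X\cap A|\le|D|=\cf(\gamma)$, each $\sigma$ has size $<\delta$, and $\delta\le\cf(\gamma)$ by your first subcase, whence $|\gamma|\le\delta\cdot\cf(\gamma)=\cf(\gamma)$ directly, with no contradiction argument and no strong covering lemma needed. Without this step your proposal establishes only $\cf(\gamma)\ge\delta$, which is strictly weaker than the lemma.
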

\begin{proof}
Let $\gamma >\delta$. Of course, $\cf(\gamma)\leq\vert\gamma\vert$. Now we prove the reverse inequality. By Lemma \ref{deltacoverwem}, $N$ satisfies the $\delta$-covering property so, as $N$ believes $\gamma$ is a regular cardinal, we have that $\cf(\gamma)\geq\delta$. Now fix $\mathcal{U}$ a $\gamma$-supercompactness measure, such that $N\cap \mathcal{P}_\delta(\gamma)\in \mathcal{U}$ and $\mathcal{U}\cap N\in N$. As $\gamma$ is a regular cardinal of $N$, we may apply Theorem \ref{solovayset} within $N$ and get  a Solovay set $X\in N$. So the $\sup$ function is an injection on $X$ and $X$ belongs to $\mathcal{U}$. Now fix  a club $D\subseteq\gamma$ of order type $\cf(\gamma)$ and define $A=\{\sigma\in \mathcal{P}_\delta(\gamma)\mid \sup(\sigma)\in D\}$. 

We first claim that $A\in\mathcal{U}$. Letting $j:V\to M$ be the ultrapower map induced by $\mathcal{U}$, it is enough to show that $j[\gamma]\in j(A)$.  
Define $\beta=\sup j[\gamma]$. By the definition of $A$, 
we need to see that $\beta\in j(D)$. Note that $j(D)$ is a club in $j(\gamma)$, and as $D$ is unbounded in $\gamma$
we have that $j[\gamma]\cap j(D)$ is unbounded in $\beta$.
Thus $j(D)$ being closed implies $\beta\in j(D)$. Hence $\{\sigma\in X\mid \sup(\sigma)\in D\}\in \mathcal{U}$. Recall that $\mathcal{U}$ is fine, so
$$\gamma=\bigcup \{\sigma\in X\vert \sup(\sigma)\in D\}.$$

Now, because the $\sup$ function is injective on $X$, we have that the cardinality of $\gamma$ is at most $\delta\,\vert D\vert$. But the order type of $D$ is $\cf(\gamma)$, so $\vert \gamma\vert\leq\delta\,\cf(\gamma)$. Finally remember $\delta\leq\cf(\gamma)$, so $\vert \gamma\vert\leq \cf(\gamma)$ which concludes the proof.
\end{proof}

\begin{corollary}\label{wemwc}
Let $N$ be a weak extender model for $\delta$ supercompact  and $\gamma>\delta$ be a singular cardinal, then
\begin{enumerate}
\item $N \models `\gamma\mbox{ is singular'}$ and
\item $\gamma ^{+}=(\gamma^{+})^N$.
\end{enumerate}
\end{corollary}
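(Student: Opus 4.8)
The plan is to derive both clauses from Lemma \ref{wemclose}, whose contrapositive converts regularity in $N$ into regularity in $V$ for ordinals above $\delta$. Throughout I would use the standard fact that since $N \subseteq V$, every cardinal of $V$ remains a cardinal of $N$; in particular $\gamma$ and $\gamma^+$ are cardinals of $N$.

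For clause (1), suppose toward a contradiction that $N \models {}$`$\gamma$ is regular'. Since $\gamma > \delta$, Lemma \ref{wemclose} yields $|\gamma| = \cf(\gamma)$. But $\gamma$ is a cardinal of $V$, so $|\gamma| = \gamma$, whence $\cf(\gamma) = \gamma$ and $\gamma$ is regular in $V$, contradicting the hypothesis that $\gamma$ is singular. As $\gamma$ is a cardinal of $N$, being non-regular there means it is singular there, which is exactly clause (1).

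For clause (2), the inequality $(\gamma^+)^N \le \gamma^+$ is immediate, since $\gamma^+$ is a cardinal of $V$, hence of $N$, and exceeds $\gamma$. For the reverse, suppose toward a contradiction that $\eta := (\gamma^+)^N < \gamma^+$. Then $\eta$ is a cardinal of $N$ with $\gamma < \eta < \gamma^+$, so in $V$ we have $|\eta| = \gamma$. Being a successor cardinal of $N$, $\eta$ is regular in $N$, and $\eta > \delta$, so Lemma \ref{wemclose} gives $|\eta| = \cf(\eta)$, that is $\cf(\eta) = \gamma$. This is absurd, because $\cf(\eta)$ is always a regular cardinal whereas $\gamma$ is singular. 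Hence $(\gamma^+)^N = \gamma^+$.

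The argument is short once Lemma \ref{wemclose} is available, and I would not expect a genuine obstacle. The only points needing care are the standard transfer of cardinals from $V$ down to the inner model $N$ (so that $\gamma$, $\eta$, and $\gamma^+$ are the right kind of objects to which the lemma applies), and the recognition that the contradiction in clause (2) comes from $\cf(\eta)$ necessarily being regular while $|\eta| = \gamma$ is singular.
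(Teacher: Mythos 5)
Your proof is correct and is exactly the intended argument: the paper's own proof consists of the single line ``Immediate by Lemma \ref{wemclose},'' and your write-up correctly supplies the details --- applying the lemma's contrapositive to $\gamma$ for clause (1), and to the $N$-successor $\eta=(\gamma^+)^N$ for clause (2), where the contradiction $\cf(\eta)=\gamma$ with $\gamma$ singular is the right one. Nothing is missing.
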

\begin{proof}
Immediate by Lemma \ref{wemclose}. 
\end{proof}

Next, we characterise the HOD Conjecture in two ways, each of  which says HOD is close to $V$ in a certain sense.

\begin{theorem}\label{equivalences}
Let $\delta$ be an extendible cardinal. The following are equivalent.
\begin{enumerate}
\item The $\mathrm{HOD}$ Conjecture.
\item $\mathrm{HOD}$ is a weak extender model for $\delta$ supercompact.
\item Every singular cardinal $\gamma>\delta$, is singular in $\mathrm{HOD}$ and $\gamma^+=(\gamma^+)^{\HOD}$.
\end{enumerate}
\end{theorem}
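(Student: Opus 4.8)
The plan is to prove the implications $(2)\Rightarrow(3)\Rightarrow(1)\Rightarrow(2)$, the last being the substantial one. The implication $(2)\Rightarrow(3)$ is immediate from Corollary~\ref{wemwc}: if $\HOD$ is a weak extender model for $\delta$ supercompact, then every singular $\gamma>\delta$ is singular in $\HOD$ and satisfies $(\gamma^+)^{\HOD}=\gamma^+$. For $(3)\Rightarrow(1)$ I would argue as sketched after Definition~\ref{HODConjecture}: for each singular cardinal $\gamma>\delta$, clause $(3)$ makes $\gamma^+=(\gamma^+)^{\HOD}$ a successor cardinal of $\HOD$, hence not a measurable cardinal of $\HOD$; by the contrapositive of Lemma~\ref{omega} it is therefore not $\omega$-strongly measurable in $\HOD$. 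Since $\{\gamma^+\mid \gamma>\delta \text{ singular}\}$ is a proper class of regular cardinals, this yields the $\HOD$ Conjecture.

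The real work is $(1)\Rightarrow(2)$. Fix $\gamma>\delta$; the goal is a normal fine measure $\mathcal{U}$ on $\mathcal{P}_\delta(\gamma)$ that concentrates on $\HOD$ and is amenable to it. I would first reduce to the case of regular $\gamma$ that are themselves not $\omega$-strongly measurable in $\HOD$: the $\HOD$ Conjecture provides a proper class of such cardinals, and a supercompactness measure on $\mathcal{P}_\delta(\gamma)$ restricts to one on $\mathcal{P}_\delta(\gamma')$ for any $\delta<\gamma'<\gamma$ in a way that preserves both concentration on $\HOD$ and amenability; so it suffices to produce the measure for cofinally many regular $\gamma$. Using extendibility, hence supercompactness, of $\delta$, I would fix a $\gamma$-supercompactness embedding $j:V\to M$ and let $\mathcal{U}=\{X\mid j[\gamma]\in j(X)\}$ be the derived measure, which is automatically normal and fine.

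The crux is to show that the seed $j[\gamma]$ can be reconstructed inside $\HOD^M$. Because $\gamma$ is not $\omega$-strongly measurable in $\HOD$, for every sufficiently small regular $\kappa<\gamma$ there is a partition $\langle S_\alpha\mid\alpha<\kappa\rangle$ of $\cof(\omega)\cap\gamma$ into genuinely stationary sets lying in $\HOD$. Applying the Remark following Theorem~\ref{solovayset} to such a partition expresses $j[\kappa]$ as $\{\alpha<\sup j[\kappa]\mid S^*_\alpha\cap\beta \text{ is stationary in }\beta\}$, where $\beta=\sup j[\gamma]$ and $\langle S^*_\alpha\rangle=j(\langle S_\alpha\rangle)$. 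Since $\langle S_\alpha\rangle\in\HOD$ we have $\langle S^*_\alpha\rangle\in\HOD^M$, and, exactly as in the stationarity-recognition step of Lemma~\ref{omega}, $\HOD^M$ can recognise which of its own sets $S^*_\alpha\cap\beta$ are stationary in $\beta$ in the sense of $M$. Letting $\kappa$ range cofinally below $\gamma$ and taking the union shows that $j[\gamma]\in\HOD^M$ and is $\HOD^M$-definable from $\beta$ together with the images of the $\HOD$-partitions. Concentration on $\HOD$ is then immediate, since $j(\HOD\cap\mathcal{P}_\delta(\gamma))=\HOD^M\cap\mathcal{P}_{j(\delta)}(j(\gamma))$ and $j[\gamma]$ lies in this set; and amenability follows because for $X\in\HOD$ the condition $j[\gamma]\in j(X)$ is decided by an $\HOD^M$-internal computation, which transfers back to a definition of $\mathcal{U}\cap\HOD$ inside $\HOD$ via the elementarity of $j\upharpoonright\HOD:\HOD\to\HOD^M$.

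I expect the amenability step to be the main obstacle. Two points need care: first, that the stationarity appearing in the Solovay reconstruction, computed in $M$, is correctly recognised by $\HOD^M$ for $\HOD^M$-sets, which is the transfer to $M$ of the recognition phenomenon used in Lemma~\ref{omega}; and second, that $j\upharpoonright\HOD$ really is an elementary map $\HOD\to\HOD^M$, so that the $\HOD^M$-definition of the trace of $\mathcal{U}$ pulls back to a genuine element of $\HOD$. A cleaner alternative, sidestepping the second point, is to define the candidate measure directly inside $\HOD$ from the $\HOD$-partitions and then verify, using the $V$-side Solovay characterisation, that it coincides with $\mathcal{U}\cap\HOD$. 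Either way, keeping straight the three layers $V$, $M$, and $\HOD^M$, and in particular distinguishing true stationarity from stationarity as computed in each, is where the argument must be handled most carefully.
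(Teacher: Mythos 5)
Your implications $(2)\Rightarrow(3)$ and $(3)\Rightarrow(1)$ match the paper, and your strategy for $(1)\Rightarrow(2)$ --- use a partition of a $\cof(\omega)$ set into stationary pieces lying in $\HOD$, guaranteed by non-$\omega$-strong-measurability, and apply the Remark after Theorem~\ref{solovayset} to recognise the pointwise image of the embedding inside the target model's $\HOD$ --- is indeed the paper's idea. But there is a genuine gap exactly where you predicted one: amenability. You work with a $\gamma$-supercompactness ultrapower $j:V\to M$, and your Solovay-partition argument (at best) yields $j[\gamma]\in\HOD^M$. That gives concentration, but it does \emph{not} give $j[\gamma]\in\HOD$: the ultrapower $M$ is definable only from the measure, which need not be ordinal definable, so $\HOD^M$ need not be contained in $\HOD^V$. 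Your proposed repair via ``the elementarity of $j\upharpoonright\HOD:\HOD\to\HOD^M$'' cannot close this: $\mathcal{U}\cap\HOD$ is the preimage under $j\upharpoonright\HOD$ of a set in $\HOD^M$, and preimages of definable sets under an elementary embedding need not be definable in the source (this is precisely the $0^\#$ phenomenon for $L$). What the paper actually needs, and gets, is the restricted map $j\upharpoonright(V_\gamma\cap\HOD)$ as a \emph{member} of $\HOD$; for that one needs $j[\gamma]$ to lie in the true $\HOD$ of $V$. This is where extendibility, and not mere supercompactness, is used: the paper takes $j:V_{\eta+1}\to V_{j(\eta)+1}$ between genuine rank initial segments of $V$, so that $\HOD^{V_{j(\eta)}}=j(\HOD\cap V_\eta)\subseteq\HOD$, and then $j[\gamma]\in\HOD$, whence $j\upharpoonright(V_\gamma\cap\HOD)\in\HOD$ as the inverse of a Mostowski collapse. (Indeed, remark (iii) at the end of Section~\ref{hod} flags that the corresponding question above a supercompact is open, so an argument using only the ultrapower should raise suspicion.)

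There is also a quantitative mismatch in your setup. You place the stationary partition at $\gamma$ itself and reconstruct $j[\kappa]$ for regular $\kappa<\gamma$ with $(2^\kappa)^{\HOD}<\gamma$; such $\kappa$ need not be cofinal in $\gamma$, so the union need not be $j[\gamma]$. More importantly, to compute the trace on $\HOD$ of a measure on $\mathcal{P}_\delta(\zeta)$ you must capture $j$ on a set of $\HOD$-cardinality at least $(2^{\zeta})^{\HOD}$, i.e.\ you need $j[\gamma]\in\HOD$ for some $\gamma>2^{\zeta}$ with $\vert V_\gamma\vert^{\HOD}=\gamma$, not merely $j[\zeta]$. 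The paper therefore separates the three cardinals: the target $\zeta$, the capture level $\gamma>2^{\zeta}$, and a regular $\lambda>2^{\gamma}$ that is not $\omega$-strongly measurable in $\HOD$, partitioning $\cof(\omega)\cap\lambda$ into $\gamma$ many stationary sets in one step. Your reduction-by-projection idea is fine as far as it goes, but it does not substitute for this arrangement.
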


\begin{proof}
(2) implies (3) is just Lemma \ref{wemwc}. That (3) implies (1) was shown in the discussion right after the definition of the HOD Conjecture (Definition \ref{HODConjecture}). We now prove (1) implies (2).

Given $\zeta>\delta$, we wish to show that there is a $\zeta$-supercompactness measure $\mathcal{U}$ such that $\mathcal{U}\cap \HOD \in \HOD$ and $\mathcal{P}_\delta(\gamma)\cap \HOD\in \mathcal{U}$. For this, take $\gamma>2^\zeta$, such that $\vert V_\gamma\vert^{\mathrm{HOD}}=\gamma$ and fix a regular cardinal $\lambda>2^\gamma$ such that $\lambda$ is not $\omega$-strongly measurable in HOD. Finally, pick $\eta>\lambda$ such that  the defining formula for HOD is absolute for $V_\eta$, whence ${\mathrm{HOD}}^{V_\eta}=$HOD$\cap V_\eta$. As $\delta$ is extendible, there is an elementary embedding $j:V_{\eta+1}\to V_{j(\eta)+1}$ with critical point $\delta$.

\begin{claim}
 $j[\gamma]\in{\mathrm{HOD}}^{V_{j(\eta)}}.$ 
 \end{claim}

As $\lambda$ is not $\omega$-strongly measurable in $\mathrm{HOD}$ and $2^\gamma<\lambda$ (in $V$ and so in HOD) there is a partition $\langle S_\alpha\mid \alpha\in \gamma\rangle$ of $\cof(\omega)\cap\lambda$  into stationary sets such that $\langle S_\alpha\ \mid \alpha< \gamma\rangle \in \mathrm{HOD}$. Thus $\langle S_\alpha\mid\alpha\in\gamma\rangle\in{\mathrm{HOD}}^{V_{\eta}}$. By the elementarity of $j$ we have
$$ \langle S^*_\alpha\mid\alpha\in j(\gamma)\rangle=j\left(\langle S_\alpha\mid\alpha\in\gamma\rangle\right)\in{\mathrm{HOD}}^{V_{j(\eta)}}.$$
Let $\beta=\sup j[\lambda]$ and $\tilde \beta= \sup j[\gamma]$. By  the remark after the proof of Theorem \ref{solovayset}, 
$$j[\gamma]=\{\alpha\in\tilde\beta\,\vert\ S^*_{\alpha}\cap\beta\mbox{ is stationary in }\beta\}$$
This shows that $j[\gamma]$ is OD in $V_{j(\eta)}$. Moreover $V_{j(\eta)}$ is correct about stationarity in $\beta$, thus $j[\gamma]\in{\mathrm{HOD}}^{V_{j(\eta)}}$. Also note that $j[\zeta]\in{\mathrm{HOD}}^{V_{j(\eta)}}$.

Now, observe that $\HOD^{V_{j(\eta)}}\subset \HOD$, 
so we have that $j[\gamma]\in \HOD$. Also $\vert V_\gamma\vert^{\HOD}=\gamma$, so we may take $e\in$HOD a bijection from $\gamma$ to $V^{\mathrm{HOD}}_\gamma$. Clearly $j(e)[j[\gamma]]=j[V_\gamma\cap \mathrm{HOD}]$ and so $j[V_\gamma\cap \mathrm{HOD}]\in \HOD$. Furthermore, as $$j\upharpoonright (V_\gamma\cap \mathrm{HOD})$$ is the inverse of the Mostowski collapse, we have that
$$j\upharpoonright (V_\gamma\cap \mathrm{HOD})\in\mathrm{HOD}.$$
Now, let $\mathcal{U}$ be the ultrafilter on $\mathcal{P}_\delta(\zeta)$ derived from $j$. That is, for $A\subseteq \mathcal{P}_\delta(\zeta)$, $A\in \mathcal{U}$ iff $j[\zeta]\in j(A)$. So,
$$\mathcal{P}_\delta (\zeta) \cap\mathrm{HOD}\in\mathcal{U} \mbox{ as } j[\zeta]\in {\mathrm{HOD}}^{V_{j(\eta)}}= j(\mathrm{HOD}\cap V_\eta)$$
$$\mathcal{U}\cap\mathrm{HOD}\in \mbox{HOD as } j\upharpoonright (V_\gamma\cap \mathrm{HOD})\in\mathrm{HOD} \mbox{ and }\gamma> 2^{\zeta}.$$
Thus $\mathcal{U}$ concentrates on HOD and is amenable to HOD as desired.
\end{proof}

As a corollary, we obtain the following version of the HOD Dichotomy, Theorem \ref{hoddichotomy}.

\begin{corollary}\label{final}
Let $\delta$ be an extendible cardinal. Then exactly one of then following holds.
\begin{enumerate}
\item For every singular cardinal $\gamma>\delta$, $\gamma$ is singular in $\mathrm{HOD}$ and\\
 $\gamma^+=(\gamma^+)^{\HOD}$.  
\item There exists a $\kappa>\delta$ such that every regular $\gamma>\kappa$ is measurable in $\mathrm{HOD}$.
\end{enumerate}
\end{corollary}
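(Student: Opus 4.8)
The plan is to read the dichotomy straight off the equivalences already in hand, observing that clause~(1) of the corollary is \emph{verbatim} clause~(3) of Theorem~\ref{equivalences}, and hence—since $\delta$ is extendible—equivalent to the HOD Conjecture of Definition~\ref{HODConjecture}. Because the HOD Conjecture is a single statement, it either holds or fails, and I would organise the whole proof around this trichotomy collapse: split on the truth value of the HOD Conjecture and show that the two cases deliver the two clauses.

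First I would treat the case where the HOD Conjecture holds. Then Theorem~\ref{equivalences} gives its clause~(3), which is precisely clause~(1) of the corollary, so there is nothing further to do. Next I would treat the case where the HOD Conjecture fails. By Definition~\ref{HODConjecture} this means there is \emph{no} proper class of regular cardinals that fail to be $\omega$-strongly measurable in $\HOD$; equivalently, the regular cardinals that are not $\omega$-strongly measurable in $\HOD$ are bounded by some ordinal $\kappa_0$. I would then set $\kappa=\max(\kappa_0,\delta)$, so that every regular $\gamma>\kappa$ is $\omega$-strongly measurable in $\HOD$, and apply Lemma~\ref{omega} to conclude that each such $\gamma$ is measurable in $\HOD$. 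This yields clause~(2), with the required $\kappa>\delta$.

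It remains to secure the word \emph{exactly}, i.e.\ that the two clauses are mutually exclusive; this is the only point I expect to require genuine care. I would argue by contradiction: if both clauses held, then clause~(2) supplies $\kappa>\delta$ above which every regular cardinal is measurable in $\HOD$, so choosing any singular cardinal $\gamma>\kappa$ makes $\gamma^+$ a regular cardinal above $\kappa$, hence measurable, and therefore inaccessible, in $\HOD$. But an inaccessible cardinal of $\HOD$ is a limit cardinal there, so the $\HOD$-successor of $\gamma$ is strictly below $\gamma^+$, that is $(\gamma^+)^{\HOD}<\gamma^+$, contradicting clause~(1). The substance of the argument is carried entirely by Theorem~\ref{equivalences} and Lemma~\ref{omega}; the only place a slip is easy is this exclusivity step, where I must invoke that measurability in $\HOD$ forces inaccessibility there and so prevents $\gamma^+$ from being computed as a successor cardinal in $\HOD$.
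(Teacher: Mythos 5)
Your proposal is correct and follows essentially the same route as the paper: the paper proves $\neg(2)\Rightarrow(1)$ by passing from ``arbitrarily large regular cardinals not measurable in $\HOD$'' through Lemma~\ref{omega} to the $\HOD$ Conjecture and then through Theorem~\ref{equivalences}, which is just the contrapositive packaging of your case split on the truth value of the $\HOD$ Conjecture. Your explicit exclusivity argument (a measurable, hence inaccessible, $\gamma^+$ in $\HOD$ cannot be $\HOD$'s successor of $\gamma$) is sound and fills in a step the paper leaves implicit; the only nitpick is that $\max(\kappa_0,\delta)$ should be bumped to ensure $\kappa>\delta$ strictly.
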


\begin{proof}
Suppose (2) does not hold, then there are arbitrarily large regular cardinals that are not measurable in HOD. By Lemma \ref{omega} there are arbitrarily  large regular cardinals that are not $\omega$-strongly measurable in HOD. Now by the proof of Theorem \ref{equivalences}, this implies that $\mathrm{HOD}$ is a weak extender model for $\delta$ supercompact. Finally Corollary \ref{wemwc} yields (1).
\end{proof}

\section{Elementary Embeddings of weak extender models} \label{embed}

We now give more evidence that if $N$ is weak extender model for $\delta$ supercompact  then it is close to $V$.
We will prove that if $\delta$ an extendible cardinal, $N$ is a weak extender model for $\delta$ supercompact and $j$ is an elementary embedding between levels of $N$  with $\crit(j)\geq \delta$, then $j\in N$. 
This implies that if $\delta$ is extendible and the $\mathrm{HOD}$ Conjecture holds then there are no elementary embeddings from $\mathrm{HOD}$ to $\mathrm{HOD}$ with critical point greater or equal $\delta$. This says that a natural analog of $0^\#$ for HOD does not exist.
As one would expect from Magidor's characterisation of supercompactness, Theorem \ref{magidor}, there is an alternative formulation of ``weak extender model for $\delta$ supercompact'' in terms of suitable elementary embeddings $j:V_{\bar\kappa+1}\to V_{\kappa+1}$ for $\bar\kappa <\delta$.

\begin{theorem}\label{mcowem}
Let $N$ be a proper class model of ZFC. Then the following are equivalent:
\begin{enumerate}
\item $N$ is a weak extender model for $\delta$ supercompact.
\item For every $\kappa>\delta$ and $b\in V_\kappa$, there exist two cardinals $\bar{\kappa}$ and $\bar{\delta}$ below $\delta$, $\bar{b}\in V_{\bar\kappa}$ and $j:V_{\bar{\kappa}+1}\to V_{\kappa+1}$ such that: 
$$\mbox{crit}(j)=\bar{\delta},\mbox{   } j(\bar{\delta})=\delta, \mbox{    } j(\bar{b})=b,$$
$$j(N\cap V_{\bar{\kappa}})=N\cap V_{\kappa}\mbox{ and}$$
$$j\upharpoonright (V_{\bar{\kappa}}\cap N)\in N.$$
\end{enumerate}
\end{theorem}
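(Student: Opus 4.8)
The plan is to prove both directions using Magidor's characterisation (Theorem \ref{magidor}) as the backbone, since condition (2) is essentially Magidor's condition enriched with two extra clauses that track $N$ itself: that $j$ maps $N\cap V_{\bar\kappa}$ to $N\cap V_\kappa$, and that the restriction $j\upharpoonright(V_{\bar\kappa}\cap N)$ lies in $N$. For the direction $(1)\Rightarrow(2)$, I would start exactly as in the reverse direction of Magidor's proof: given $\kappa>\delta$ and $b\in V_\kappa$, set $\gamma=|V_{\kappa+1}|$ (or slightly larger, so that $\gamma>2^\kappa$ and $|V_\kappa|^N=\gamma$ as in the proof of Theorem \ref{equivalences}), and fix a $\gamma$-supercompactness measure $\mathcal{U}$ witnessing that $N$ is a weak extender model, so that $N\cap\mathcal{P}_\delta(\gamma)\in\mathcal{U}$ and $\mathcal{U}\cap N\in N$. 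Let $\pi:V\to M$ be the ultrapower embedding. Then $\pi\upharpoonright V_{\kappa+1}\in M$ witnesses inside $M$ the existence of an embedding $i:V_{\bar\kappa+1}\to V_{\pi(\kappa)+1}$ with the Magidor properties, and by elementarity we pull this back to $V$.

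The crux of $(1)\Rightarrow(2)$ is to verify the two $N$-clauses, and this is where the amenability and concentration of $\mathcal{U}$ must be exploited. First I would argue that $\pi[\gamma]\in M$ and, using the argument from Theorem \ref{equivalences}, that $\pi\upharpoonright(V_\gamma\cap N)\in N$: indeed since $N\cap\mathcal{P}_\delta(\gamma)\in\mathcal{U}$ we get $\pi[\gamma]\in\pi(N)$, and combining with a bijection $e\in N$ from $\gamma$ onto $V_\gamma^N$ recovers $\pi\upharpoonright(V_\gamma\cap N)$ as the inverse of a Mostowski collapse, which lies in $N$ by amenability. From $N\cap\mathcal{P}_\delta(\gamma)\in\mathcal{U}$ one also reads off that $\pi(N\cap V_{\bar\kappa})$ agrees with $N\cap V_{\pi(\kappa)}$ at the relevant level, so the statement ``there exists $i$ with all four displayed properties'' holds in $M$; elementarity of $\pi$ then yields the genuine $j$ in $V$ with $\mathrm{crit}(j)=\bar\delta$, $j(\bar\delta)=\delta$, $j(\bar b)=b$, $j(N\cap V_{\bar\kappa})=N\cap V_\kappa$, and $j\upharpoonright(V_{\bar\kappa}\cap N)\in N$.

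For $(2)\Rightarrow(1)$, I would reverse the construction in Magidor's theorem to manufacture a supercompactness measure from the given embeddings, and then check it concentrates on and is amenable to $N$. Given $\zeta>\delta$, apply (2) with $\kappa$ chosen so that $V_\kappa$ sees $\zeta$ and a coding of $N\cap V_\kappa$ (take $b$ to encode the relevant data); obtain $\bar\delta<\bar\kappa<\delta$ and $j:V_{\bar\kappa+1}\to V_{\kappa+1}$ with the four properties. Fix $\bar\zeta$ with $j(\bar\zeta)=\zeta$; then $j[\bar\zeta]\in V_{\kappa+1}$ induces a normal fine $\bar\delta$-complete ultrafilter $\bar{\mathcal{U}}$ on $\mathcal{P}_{\bar\delta}(\bar\zeta)$, and by elementarity $\mathcal{U}=j(\bar{\mathcal{U}})$ is a $\zeta$-supercompactness measure as in Theorem \ref{magidor}. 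The condition $j(N\cap V_{\bar\kappa})=N\cap V_\kappa$ ensures $N\cap\mathcal{P}_{\bar\delta}(\bar\zeta)\in\bar{\mathcal{U}}$, hence $N\cap\mathcal{P}_\delta(\zeta)\in\mathcal{U}$ (concentration); and the condition $j\upharpoonright(V_{\bar\kappa}\cap N)\in N$, together with $j(\bar{\mathcal{U}})=\mathcal{U}$, lets $N$ compute $\mathcal{U}\cap N$, giving amenability.

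The main obstacle I anticipate is the bookkeeping that links the embedding-level condition $j(N\cap V_{\bar\kappa})=N\cap V_\kappa$ to the ultrafilter-level conditions of Definition \ref{wemfdsc}, in both directions. In particular, one must be careful that $V_\kappa$ (resp.\ $V_{j(\eta)}$) correctly computes $N\cap V_\kappa$ --- this is the role played by choosing $\eta$ so that the defining formula for the relevant inner model is absolute to $V_\eta$, exactly as in the proof of Theorem \ref{equivalences} --- and that the derived measure genuinely is normal and fine rather than merely $\delta$-complete, which again rides on the Magidor machinery. Handling the case where $N$ is not definable requires treating $N\cap V_\kappa$ as a parameter absorbed into $b$, so that the phrase $j(N\cap V_{\bar\kappa})=N\cap V_\kappa$ is literally an instance of $j(\bar b)=b$ for a suitable coding.
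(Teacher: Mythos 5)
Your direction $(2)\Rightarrow(1)$ matches the paper's argument: derive $\bar{\mathcal{U}}$ on $\mathcal{P}_{\bar\delta}(\bar\gamma)$ from $j$, set $\mathcal{U}=j(\bar{\mathcal{U}})$, and use the two extra clauses for concentration and amenability. (One small correction there: concentration needs $j[\bar\gamma]\in N$, which comes from $j\upharpoonright(V_{\bar\kappa}\cap N)\in N$, not only from $j(N\cap V_{\bar\kappa})=N\cap V_\kappa$.) The gap is in $(1)\Rightarrow(2)$. You assert that $\pi\upharpoonright(V_\gamma\cap N)\in N$ ``by amenability,'' deducing it from $\pi[\gamma]\in\pi(N)$. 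This is a non sequitur: concentration gives $\pi[\gamma]\in\pi(N)$, and the bijection $e\in N$ then yields $\pi\upharpoonright(V_\gamma\cap N)\in\pi(N)$ --- membership in $\pi(N)$, not in $N$. The step you are importing from Theorem \ref{equivalences} closed exactly this gap using $\mathrm{HOD}^{V_{j(\eta)}}\subseteq\mathrm{HOD}$, a fact special to $\mathrm{HOD}$ with no analogue for an arbitrary weak extender model $N$; for general $N$ you cannot pass from $\pi(N)$ to $N$. Moreover, membership in $N$ is not even the fact the reflection argument needs.

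What the reflection actually requires is that the witness $i=\pi\upharpoonright V_{\kappa+1}$ satisfies the two $N$-clauses \emph{in $M$, with $\pi(N\cap V_{\gamma+1})$ playing the role of $N$}. Concretely you must prove (a) $N\cap V_\gamma=\pi(N\cap V_\delta)\cap V_\gamma$, so that the clause $i(X\cap V_\kappa)=X\cap V_{\pi(\kappa)}$ holds for $X=\pi(N\cap V_{\gamma+1})$, and (b) $\pi\upharpoonright(V_\kappa\cap N)\in\pi(V_{\gamma+1}\cap N)$. Your sketch establishes neither: the agreement (a) is waved through as something one ``reads off'' from concentration, but concentration alone does not give level-by-level agreement of $N$ and $\pi(N)$. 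The missing idea is the set $Y=\{\sigma\in N\cap\mathcal{P}_\delta(\gamma)\mid e[\sigma]\mbox{ collapses to }V^N_{\otp(\sigma)}\}$, which is club in $N\cap\mathcal{P}_\delta(\gamma)$ and hence lies in $\mathcal{U}\cap N$ by normality and concentration (here $\gamma=|V_\gamma|^N$ is needed for $e$ to exist); then $\pi[\gamma]\in\pi(Y)$ forces $\pi[V_\gamma\cap N]=\pi(e)[\pi[\gamma]]$ to collapse to $V_\gamma\cap\pi(N\cap V_\delta)$, which yields both (a) and, since the collapsing map is the inverse of $\pi\upharpoonright(V_\gamma\cap N)$ and $\pi(e)[\pi[\gamma]]\in\pi(V_{\gamma+1}\cap N)$ by \L o\'s, also (b). Without this argument the proof of $(1)\Rightarrow(2)$ does not go through.
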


\begin{proof}[Proof (2) implies (1)]
Given $\gamma>\delta$,  we may assume $\gamma=\vert V_{\gamma}\vert$. Let $\bar{\kappa}=\gamma+\omega$.  We obtain $\bar{\kappa}$, $\bar{\delta}$ and $j$ using (2). Take $\bar\gamma$ such that $\bar{\kappa}=\bar{\gamma}+\omega$, whence $j(\bar\gamma)=\gamma$. Let $\bar{\mathcal{U}}$ be the measure on $\mathcal{P}_{\bar \delta}(\bar\gamma)$ derived form $j$. That is, for $A\in \mathcal{P}_{\bar\delta}(\bar\gamma)$
$$A\in\bar{\mathcal{U} }\iff j[\bar\gamma] \in j(A).$$
Define $\mathcal{U}=j(\bar{\mathcal{U}})$. We show that $\mathcal{U}$ is a $\gamma$-supercompactness measure such that $\mathcal{P}_{\delta}(\gamma)\cap N\in \mathcal{U}$ and $\mathcal{U}\cap N  \in N$. 

We claim that $P_{\bar\delta}(\bar\gamma)\cap N\in \bar{\mathcal{U}}$. By (2), we know that $ j(N\cap V_{\bar\kappa})=N\cap V_\kappa$. Thus for every $a\in V_{\bar\kappa}$ we have 
$$j(a\cap N)= j(a)\cap j(N\cap V_{\bar\kappa})=j(a)\cap N.$$
Recalling that $\bar \kappa=\bar \gamma+\omega$,
$$j\left(P_{\bar\delta}(\bar\gamma)\cap N\right)= \mathcal{P}_\delta(\gamma)\cap N.$$
Now, as $j\upharpoonright(N\cap V_{\bar\kappa})\in N$,
we have $j[\bar\gamma]\in N$, so $j[\bar\gamma]\in \mathcal{P}_\delta(\gamma)\cap N=j (P_{\bar\delta}(\bar\gamma)\cap N)$, which readily implies our claim.

Finally, by elementarity of $j$, we have that $\mathcal{U}$ is a fine and normal measure on $\mathcal{P}_\delta(\gamma)$ and, by the previous claim, $j(N\cap P_{\bar\delta}(\bar\gamma))\in \mathcal{U}$. It follows that  
$$N\cap \mathcal{P}_\delta(\gamma)\in \mathcal{U}.$$ 
Moreover, as $j\upharpoonright (N\cap V_{\bar\kappa})\in N$, we have that $j(\bar{\mathcal{U}}\cap N)\in N$.  Hence $$j(\bar{\mathcal{U}}\cap N)=j(\bar{\mathcal{U}})\cap N= \mathcal{U}\cap N \in N.$$
This concludes the first direction.
\end{proof}

\begin{proof}[Proof (1) implies (2)] Let $\kappa>\delta$, $b\in V_\kappa$ and fix $\gamma>\vert V_{\kappa+\omega} \vert$ such that 
$\vert V_\gamma\vert^N=\gamma$. 
Fix a $\gamma$-supercompactness measure $\mathcal{U}$ such that $\mathcal{P}_{\delta}(\gamma)\cap N\in \mathcal{U}$ and $\mathcal{U}\cap N\in N$. 
Now, fix  a bijection $e:\gamma\to V_\gamma^N$ in $N$. We now work in $N$.
Define $N_\sigma$  to be the Mostowski collapse of  $e[\sigma]$, and 
$$Y=\lbrace\sigma \in N\cap \mathcal{P}_\delta(\gamma)\mid N_\sigma = V^N_{\mbox{otp}(\sigma)}\rbrace.$$ 
Hence $Y$ is a club of $N\cap \mathcal{P}_\delta(\gamma)$, so it belongs to $\mathcal{U}\cap N$.
Thus if $j:V\to M$ is the ultrapower map it follows that $j[\gamma]\in j(Y)$. This implies from the definition of $Y$ that the collapse of $j(e)[j[\gamma]]$ is exactly $V_\gamma \cap j(N\cap V_\delta)$. Note also that $j(e)[j[\gamma]]=j[V_\gamma \cap N]$.
Of course $V_\gamma\cap N$ is the collapse of $j[V_\gamma \cap N]$, so $$V_\gamma \cap N= V_\gamma \cap j(N\cap V_\delta),$$ which implies 
$$ V_{\kappa}\cap N= V_{\kappa}\cap j(N\cap V_\delta).$$

It is clear that for $\sigma \in N$ we have that $e[\sigma]\in V_{\gamma +1}\cap N$. Then by \L os' Theorem we have that $j[V_\gamma \cap N]=j(e)[j[\gamma]]\in j(V_{\gamma +1}\cap N)$. Notice that the collapsing map of $ j[V_\gamma \cap N]$ is just the inverse of $j\upharpoonright (V_\gamma \cap N)$, thus
$$j\upharpoonright (V_{\kappa} \cap N)\in j(V_{\gamma +1}\cap N).$$
Now $M$ being closed under $\gamma$ sequences and $\gamma>\vert V_{\kappa+\omega} \vert$ imply $j\upharpoonright(V_{\kappa+1})$ belongs to  $M$. 
Working in $M$ let $i=j\upharpoonright(V_{\kappa+1})$. 
Now let us prove that the two previous equations imply that $i$ satisfy the conditions of \em(2) \em relative to $j(\kappa)$, $j(b)$ and $j(N\cap V_{\gamma+1})$ in $M$. Indeed the equations give
$$i\upharpoonright\left(V_{\kappa}\cap j(N\cap V_{\gamma+1})\right)=i\upharpoonright \left(V_{\kappa} \cap N\right)\in N\cap j(V_{\gamma+1}).$$
Furthermore as $i$ and $j$ agree, 
$$i(V_{\kappa}\cap j(N\cap V_{\gamma+1}))=i(V_\kappa\cap N)=j(N\cap V_{\gamma+1})\cap V_{j(\kappa)}.$$
Also $j(b)=i(b)$, so by elementarity (2) holds in $V$ with respect to $\kappa$, $b$ and $N$.

\end{proof}

We now prove that if $\delta$ is an extendible cardinal and $N$ is a weak extender model for $\delta$ supercompact, then $N$ sees all elementary embeddings between its levels. 

\begin{theorem}\label{elemofN}
Let $\delta$ be an extendible cardinal. Assume that $N$ is a weak extender model for $\delta$ supercompact and $\gamma>\delta$ is a cardinal in $N$. Let 
$$ j: H(\gamma^+)^N\to H(j(\gamma)^+)^N$$ 
be an elementary embedding with $\delta\leq\mathrm{crit}(j)$ and $j\neq \mathrm{id}$. Then $j\in N$.
\end{theorem}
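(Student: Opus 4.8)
The plan is to reduce the problem below $\delta$ using Magidor's reflection characterisation, Theorem \ref{mcowem}, and then to transfer membership in $N$ back upward using its amenability clause. First I would fix a regular $\kappa > \delta$ large enough that $j, H(\gamma^+)^N, H(j(\gamma)^+)^N \in V_\kappa$ and that $V_\kappa$ computes the relevant levels of $N$ correctly, and code $j$ together with $\gamma$ as a single parameter $b \in V_\kappa$. Applying clause (2) of Theorem \ref{mcowem} yields cardinals $\bar\kappa, \bar\delta < \delta$, a parameter $\bar b \in V_{\bar\kappa}$, and an elementary embedding $\pi : V_{\bar\kappa+1} \to V_{\kappa+1}$ with $\mathrm{crit}(\pi) = \bar\delta$, $\pi(\bar\delta) = \delta$, $\pi(\bar b) = b$, $\pi(N \cap V_{\bar\kappa}) = N \cap V_\kappa$, and $\pi \upharpoonright (V_{\bar\kappa} \cap N) \in N$. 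Decoding $\bar b$ gives an embedding $\bar\jmath := \pi^{-1}(j) \in V_{\bar\kappa}$ together with $\bar\gamma$ satisfying $\pi(\bar\gamma) = \gamma$; by elementarity $\bar\jmath : H(\bar\gamma^+)^N \to H(\bar\jmath(\bar\gamma)^+)^N$ is elementary, $\bar\jmath \ne \mathrm{id}$, and $\mathrm{crit}(\bar\jmath) \ge \bar\delta$, since $\pi$ reflects the statement $\mathrm{crit}(j) \ge \delta = \pi(\bar\delta)$.

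The reduction I would then carry out is that it suffices to show $\bar\jmath \in N$. Indeed $\bar\jmath \in V_{\bar\kappa}$, so if also $\bar\jmath \in N$ then $\bar\jmath \in V_{\bar\kappa} \cap N$, and since $\pi \upharpoonright (V_{\bar\kappa} \cap N) \in N$ we may compute $j = \pi(\bar\jmath) = \bigl(\pi \upharpoonright (V_{\bar\kappa} \cap N)\bigr)(\bar\jmath)$ entirely inside $N$, whence $j \in N$ because $N$ is closed under function application. (One can cross-check this against the commutativity relation $j(\pi(\bar x)) = \pi(\bar\jmath(\bar x))$, but the clean point is simply that $j$ is the single value $\pi(\bar\jmath)$ of a function lying in $N$ at an argument lying in $N$.) The entire weight of the argument is thereby shifted onto the reflected embedding $\bar\jmath$, which now lives inside $V_\delta$.

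The hard part will be exactly this last step, showing $\bar\jmath \in N$, because reflection has dropped the critical point: writing $\lambda = \mathrm{crit}(j)$ and $\bar\lambda = \mathrm{crit}(\bar\jmath)$ we have $\pi(\bar\lambda) = \lambda$, so $\bar\delta \le \bar\lambda < \delta \le \lambda$, and thus $\bar\jmath$ no longer satisfies the hypothesis $\mathrm{crit} \ge \delta$ of the theorem, nor is it a member of $V_\delta \cap N$ for free. Since $\bar\lambda < \lambda$ strictly, my plan is to close this by induction on the critical point, proving the strengthened statement in which $\delta$ is replaced by an arbitrary supercompact $\bar\delta \le \delta$ for which $N$ is still a weak extender model and the witnessing embeddings of Theorem \ref{mcowem} are available, and applying the induction hypothesis to $\bar\jmath$ at $\bar\delta$. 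The crux, which I expect to require the most care, is to verify that the reflected $\bar\delta$ is genuinely supercompact with $N$ a weak extender model for its supercompactness, so that the reflection machinery legitimately reapplies at the lower level; granting this, the descending sequence of critical points is a sequence of ordinals and so terminates, and at the base case, where $\mathrm{crit}(\bar\jmath)$ equals the relevant supercompact, $\bar\jmath$ is captured directly by a supercompactness measure derived from it that is amenable to $N$, exactly as in the arguments of Theorems \ref{mcowem} and \ref{equivalences}.
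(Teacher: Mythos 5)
Your first half is exactly right and matches the paper: code $j$ and $\gamma$ into a parameter $b$, apply Theorem \ref{mcowem}(2) to get $\pi:V_{\bar\kappa+1}\to V_{\kappa+1}$ with $\pi(\bar\jmath)=j$, and observe that since $\pi\upharpoonright(V_{\bar\kappa}\cap N)\in N$ it suffices to show $\bar\jmath\in N$. But the induction on critical points that you propose for this last step has a genuine gap that I do not think can be repaired in the form you describe. The inductive step requires applying the (strengthened) theorem to $\bar\jmath$ at $\bar\delta$, and that requires, as a hypothesis, that $\bar\delta$ is extendible (or at least supercompact) in $V$ and that $N$ is a weak extender model for $\bar\delta$ supercompact in $V$. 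Reflection along $\pi$ gives you only the \emph{local} fragments of these statements, namely their restrictions to $V_{\bar\kappa+1}$: clause (2) of Theorem \ref{mcowem} is a statement quantifying over \emph{all} $\kappa'>\bar\delta$, and $\pi$ certifies it only for $\kappa'<\bar\kappa$. In general $\bar\delta$ will be merely ${<}\bar\kappa$-supercompact and need not be supercompact at all, so the induction hypothesis simply does not apply to $\bar\jmath$. Your proposed base case is also never guaranteed to be reached: each reflection only gives $\mathrm{crit}(\bar\jmath)\ge\bar\delta$, with no mechanism forcing equality.

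The paper's proof avoids any induction by a self-application trick that is the real content of the theorem and is missing from your proposal. Since $\bar\kappa<\delta\le\mathrm{crit}(j)$, the restriction $\pi\upharpoonright\bigl(H(\bar\jmath(\bar\gamma)^+)^N\bigr)$ lies in $N$ (by the amenability clause of Theorem \ref{mcowem}) and indeed in $H(\gamma^+)^N$, so one may apply $j$ to it and set $\pi^*=j\bigl(\pi\upharpoonright\bigl(H(\bar\jmath(\bar\gamma)^+)^N\bigr)\bigr)$. Then for $\bar a\in H(\bar\gamma^+)^N$ and $\bar s\in H(\bar\jmath(\bar\gamma)^+)^N$ one computes, using $j(\bar a)=\bar a$ (as $\bar a\in V_{\bar\kappa}$ and $\mathrm{crit}(j)>\bar\kappa$) and the commutativity $j\circ\pi=\pi^*\circ j$ on the relevant domain, that
$$\bar s\in\bar\jmath(\bar a)\iff \bigl(\pi\upharpoonright\bigl(H(\bar\jmath(\bar\gamma)^+)^N\bigr)\bigr)(\bar s)\in\pi^*(\bar a).$$
Both parameters on the right-hand side belong to $N$, so $N$ computes $\bar\jmath$ outright, with no descent and no appeal to any large cardinal property of $\bar\delta$ in $V$. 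I would encourage you to look for this kind of argument whenever reflection hands you an object below $\delta$ together with an ambient embedding of critical point at least $\delta$: the two embeddings can be played against each other.
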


\begin{proof}
Define $b=(j,\gamma)$ and let $\kappa$ be a cardinal much larger than $j(\gamma)$. Now, as $N$ is a weak extender model for $\delta$ supercompact, we may apply Theorem \ref{mcowem} to $\kappa$ and $b$. Hence, we get  an elementary embedding $\pi: V_{\bar\kappa+1}\to V_{\kappa+1}$, two ordinals $\bar\delta$, $\bar\gamma$ and $\bar{\jmath}\in V_{\bar \kappa}$, with the following properties
$$j(N\cap V_{\bar\kappa})=N\cap V_\kappa,\;\pi\upharpoonright\left(V_{\bar\kappa}\cap N\right)\in N$$ 
and 
$$\mathrm{crit}(\pi)=\bar\delta, \;\pi(\bar{\jmath})=j,\; \pi(\bar\delta)=\delta,\;\pi(\bar\gamma)=\gamma, \; \bar\kappa<\delta .$$
Hence, by the elementarity of $\pi$, we have that $\bar{j}: H(\bar{\gamma}^+)^N\to H(\bar{\jmath}(\bar\gamma)^+)^N$ is an elementary map with $\bar\delta\leq\mbox{crit}(\bar{\jmath})$. 
Furthermore as $\bar\kappa$ is very large above $\bar{\jmath}(\bar\gamma)$, we have that 
$$\pi\upharpoonright\left(H(\bar{\jmath}(\bar\gamma)^+)^N\right)\in N,$$ 
hence $ \pi\upharpoonright\left(H(\bar{\jmath}(\bar\gamma)^+)^N\right)\in H(\gamma^+)^N.$
Define $\pi^*=j\left(\pi\upharpoonright \left(H(\bar{\jmath}(\bar\gamma)^+)^N\right)\right)$. Now we wish to show that $\bar{\jmath}\in N$. This will be done by proving that $N$ can actually compute $\bar \jmath$. For this, take $\bar{a}\in H(\bar\gamma^+)^N$ and $\bar{s}\in H(\bar{\jmath}(\bar\gamma)^+)^N$. Let $\pi(\bar a)=a$ and $\pi(\bar s)=s$ then, 
\begin{align*}
\bar s\in\bar \jmath (\bar a) &
\iff s\in j(a)\\ 
&\iff s\in j(\pi(\bar a))\\
&\iff \pi(\bar s)\in j(\pi\upharpoonright\left(H(\bar{\jmath}(\bar\gamma)^+)^N\right)(\bar a))\\
&\iff \pi(\bar s)\in\pi^*(j(\bar a))\\
&\iff \pi\upharpoonright\left(H(\bar{\jmath}(\bar\gamma)^+)^N\right)(\bar s)\in\pi^*(\bar a).
\end{align*}
Where the last equivalence follows because $\mathrm{crit}(j)>\bar \kappa$ and $\bar \kappa$ is sufficiently large above $\bar\jmath (\bar\gamma^+)^N$.
Now as $\pi^*$ and $\pi\upharpoonright\left(H(\bar{\jmath}(\bar\gamma)^+)^N\right)$ are in $N$, $\bar \jmath \in N\cap V_{\bar\kappa}$. Since $\pi$ stretches $N$ correctly up to rank $\bar\kappa$, we conclude $j=\pi(\bar \jmath)\in N$ as desired.
\end{proof}

Now we show that, if $\delta$ is an extendible cardinal, then no elementary embedding maps a weak extender model for $\delta$ supercompact to itself. For this we recall the following form of Kunen's theorem.

\begin{theorem}[Kunen]\label{kun}
Let $\kappa$ be an ordinal. Then there is no non-trivial elementary embedding
 $$i:V_{\kappa+2}\to V_{\kappa+2}.$$
\end{theorem}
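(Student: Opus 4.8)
The plan is to suppose, toward a contradiction, that some nontrivial elementary $i:V_{\kappa+2}\to V_{\kappa+2}$ exists, and to drive it into a classical partition‑function argument at the supremum of the critical sequence. First I would set $\kappa_0=\crit(i)$ and form the critical sequence $\kappa_{n+1}=i(\kappa_n)$, with $\lambda=\sup_n\kappa_n$. A routine check gives $\lambda\le\kappa$, $\cf(\lambda)=\omega$, and $i(\lambda)=\lambda$ (apply $i$ to the $\omega$‑sequence $\langle\kappa_n\rangle\in V_{\lambda+1}$ and use that $i$ fixes $\omega$). After restricting $i$ I may therefore assume $\kappa=\lambda$ and work with $i:V_{\lambda+2}\to V_{\lambda+2}$. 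The engine of the proof wants a \emph{total} elementary $j:V\to M$ agreeing with $i$ below $\lambda$ — hence with the same critical sequence and $j(\lambda)=\lambda$ — and satisfying $V_{\lambda+1}\subseteq M$. I would manufacture such a $j$ as the ultrapower by the long extender read off from $i$; the point is that this reads off the action of $i$ on all of $\mathcal{P}(\mathcal{P}(\lambda))$, so it genuinely consumes the top level $V_{\lambda+2}$ and not merely $i\upharpoonright V_{\lambda+1}$. Passing to a total $j$ is what frees me from rank bookkeeping, since $j$ may be applied to objects of any rank.

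The combinatorial heart is a theorem of Erd\H{o}s and Hajnal: there is an $f:[\lambda]^\omega\to\lambda$ such that $f''[A]^\omega=\lambda$ for every $A\subseteq\lambda$ with $|A|=\lambda$ (valid for all infinite $\lambda$, in particular for $\cf(\lambda)=\omega$). Because $j$ is total, $j(f)\in M$ automatically, and by elementarity $M$ believes that $j(f):[\lambda]^\omega\to\lambda$ has the same Jónsson property, using $j(\lambda)=\lambda$. The distinguished set is $A=j''\lambda$: it is a subset of $\lambda$ of size $\lambda$, and since $j''\lambda\in\mathcal{P}(\lambda)\subseteq V_{\lambda+1}\subseteq M$ it actually lies in $M$, with $j\upharpoonright\lambda\in M$ witnessing $|j''\lambda|^{M}=\lambda$. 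Hence $M$ thinks $j(f)''[j''\lambda]^\omega=\lambda$.

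The contradiction comes from the fact that $j$ is the identity on sets of size below its critical point. Every countable $s\subseteq j''\lambda$ equals $j''b=j(b)$ for the countable set $b=\{\xi<\lambda\mid j(\xi)\in s\}$, the last equality because $|b|=\omega<\crit(j)$. Therefore $j(f)(s)=j(f)(j(b))=j(f(b))\in j''\lambda$, and since application of the set function $j(f)$ is absolute, the full image $j(f)''[j''\lambda]^\omega$ computed in $M$ is contained in $j''\lambda$. But $\kappa_0\notin j''\lambda$, so $j''\lambda\subsetneq\lambda$, contradicting the equality $j(f)''[j''\lambda]^\omega=\lambda$ forced by the Jónsson property.

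The step I expect to be the main obstacle is exactly the manufacture of the total $j:V\to M$ with $V_{\lambda+1}\subseteq M$ out of the set map $i$: one must verify that the extender derived from $i$'s action on $V_{\lambda+2}$ yields an ultrapower closed enough to contain $j''\lambda$ and $j\upharpoonright\lambda$, and that $j$ inherits $i$'s critical sequence so that $j(\lambda)=\lambda$. This is precisely where the two extra levels above $V_\lambda$ are spent, and it is the exact point at which any such argument must break down for the consistent hypothesis $j:V_{\lambda+1}\to V_{\lambda+1}$ (an $I_1$ embedding), where no total embedding — and so no place to evaluate $j(f)$ — is available. A more self‑contained alternative would keep everything inside $V_{\lambda+2}$ by coding $[\lambda]^\omega$ and $f$ as elements of $\mathcal{P}(\mathcal{P}(\lambda))\approx V_{\lambda+2}$ and running the same computation with $i$ in place of $j$; then the delicate point instead becomes checking that $i$ respects the coding, which is again where the passage from $V_{\lambda+1}$ to $V_{\lambda+2}$ does the work.
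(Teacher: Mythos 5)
The paper offers no proof of this theorem --- it simply cites Kanamori, Theorem 23.14 --- and the proof there is, in essence, the argument you describe only at the very end as a ``more self-contained alternative'': run the Erd\H{o}s--Hajnal computation entirely inside $V_{\lambda+2}$. Your combinatorial core is correct and is the standard one: the reduction to $\kappa=\lambda=\sup_n\kappa_n$ (which needs the small observations that $i(\kappa)=\kappa$, since no ordinal of $V_{\kappa+2}$ can be moved past $\kappa+1$, and that $V_{\lambda+2}$ is definable from the fixed point $\lambda$ inside $V_{\kappa+2}$, so the restriction of $i$ is elementary); the $\omega$-J\'onsson function $f$ for $\lambda$; the set $j''\lambda$, of size $\lambda$ but omitting $\kappa_0$; and the computation $j(f)(s)=j(f(b))\in j''\lambda$ for $s=j''b=j(b)$ with $b$ countable.

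The weakness is that your main line rests on the one step you yourself flag as unverified: manufacturing a total $j:V\to M$ with $V_{\lambda+1}\subseteq M$ from the set map $i$. This can be done, but it is genuinely the hardest part of that route: since $i$ has no ambient target model, well-foundedness of the ultrapower by the derived $V_{\lambda+1}$-extender cannot be read off from a rank function $f\mapsto i(f)(a)$ (undefined when $f$ has unbounded range) and must instead come from countable completeness, itself an elementarity argument requiring that countable sequences from $V_{\lambda+1}$ and $V_{\lambda+2}$ be coded as single elements of those levels; one must then separately verify $V_{\lambda+1}\subseteq M$, continuity $j(\lambda)=\lambda$, and agreement with $i$. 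As written, this is a gap, not a proof. It is also unnecessary: your ``alternative'' should be the proof. The function $f$, the set $[\lambda]^\omega$, the sets $i''\lambda$ and $i\upharpoonright\lambda$, and the statement ``$f''[A]^\omega=\lambda$ for every $A\subseteq\lambda$ with $|A|=\lambda$'' all live in, or are first-order over, $V_{\lambda+2}$ with parameters there, and cardinality and function application are computed correctly in $V_{\lambda+2}$; so elementarity of $i$ alone gives that $i(f)$ is $\omega$-J\'onsson for $i(\lambda)=\lambda$, and the identical contradiction goes through with $i$ in place of $j$, with only routine flat-pairing coding left to check.
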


The proof can be found in \cite{Kanamori} Theorem 23.14.

\begin{theorem}\label{noj}
Let $N$  be a weak extender model for $\delta$ supercompact. Then there is no elementary embedding $j:N\to N$ with $\delta\leq\mathrm{crit}(j)$ and $j\neq\mathrm{id}$.
\end{theorem}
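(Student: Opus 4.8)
The plan is to argue by contradiction using Theorem \ref{elemofN} together with Kunen's theorem (Theorem \ref{kun}) applied inside $N$; throughout I assume, as in the surrounding discussion, that $\delta$ is extendible. Suppose $j : N \to N$ is a nontrivial elementary embedding with $\kappa_0 := \crit(j) \geq \delta$. For any cardinal $\gamma > \delta$ of $N$, the restriction $j \upharpoonright H(\gamma^+)^N$ is a nontrivial elementary map $H(\gamma^+)^N \to H(j(\gamma)^+)^N$ with critical point $\kappa_0 \geq \delta$ (using that $H(\gamma^+)$ is definable and that $j$ sends $H(\gamma^+)^N$ to $H(j(\gamma)^+)^N$), so Theorem \ref{elemofN} applies and $j \upharpoonright H(\gamma^+)^N \in N$. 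The strategy is to manufacture from these restrictions a single nontrivial elementary embedding of some $V_{\lambda+2}^N$ into itself lying in $N$, which Kunen's theorem forbids in the ZFC-model $N$.

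First I would locate the right $\lambda$. Let $\kappa_{n+1} = j(\kappa_n)$ and $\lambda = \sup_n \kappa_n$, so that $\cf^V(\lambda) = \omega$. Fixing any $N$-cardinal $\gamma > \lambda$ and setting $f = j \upharpoonright H(\gamma^+)^N \in N$, the whole critical sequence is recovered inside $N$ by iterating $f$ starting from $\crit(f) = \kappa_0$; by Replacement in $N$ the sequence $\langle \kappa_n \mid n < \omega \rangle$ belongs to $N$, whence $\cf^N(\lambda) = \omega < \kappa_0 = \crit(j)$. Applying $j$ to the cofinal map $n \mapsto \kappa_n$ and using $j \upharpoonright \omega = \id$ gives $j(\lambda) = \sup_n j(\kappa_n) = \sup_n \kappa_{n+1} = \lambda$, and consequently $j(V_{\lambda+2}^N) = V_{\lambda+2}^N$.

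Next I would extract the self-embedding. Choose an $N$-cardinal $\gamma$ with $\gamma > (|V_{\lambda+2}|)^N$, so that $V_{\lambda+2}^N \subseteq H(\gamma^+)^N$. Since $V_{\lambda+2}^N \in N$ and $j \upharpoonright H(\gamma^+)^N \in N$, the further restriction $e := j \upharpoonright V_{\lambda+2}^N$ belongs to $N$. As $V_{\lambda+2}$ is definable and $j : N \to N$ is elementary, $e$ is a fully elementary map $V_{\lambda+2}^N \to V_{j(\lambda)+2}^N = V_{\lambda+2}^N$, and it is nontrivial because it moves $\kappa_0 \in V_{\lambda+2}^N$. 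Finally, since satisfaction in the set structure $V_{\lambda+2}^N$ is absolute between $V$ and $N$ and $e \in N$, the model $N$ itself recognizes $e$ as a nontrivial elementary embedding $V_{\lambda+2}^N \to V_{\lambda+2}^N$; this contradicts Kunen's theorem (Theorem \ref{kun}) read as a theorem of $N$, completing the proof.

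The main obstacle I anticipate is the middle step: establishing $j(\lambda) = \lambda$, so that $e$ is genuinely a self-map rather than a map into a taller $V_{j(\lambda)+2}^N$. This hinges on getting the critical sequence into $N$ — which is exactly what the coherent restrictions supplied by Theorem \ref{elemofN} make possible — and hence on computing $\cf^N(\lambda) = \omega$; the remainder is the standard packaging of Kunen's argument, now carried out relative to the inner model $N$.
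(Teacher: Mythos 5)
Your proof is correct and takes essentially the same route as the paper's: pass to a fixed point $\lambda>\delta$ of $j$ (the supremum of the critical sequence), restrict $j$ to $V_{\lambda+2}^N$, invoke Theorem \ref{elemofN} to see that this restriction lies in $N$, and contradict Kunen's theorem (Theorem \ref{kun}) inside $N$. The paper merely asserts the existence of a fixed point and that the restriction belongs to $N$, whereas you supply the details (recovering the critical sequence inside $N$ from $j\upharpoonright H(\gamma^+)^N$ and cutting down from $H(\gamma^+)^N$ to $V_{\lambda+2}^N$), so no further comparison is needed.
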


\begin{proof}
Suppose for contradiction that there is such a $j$. Let $\kappa>\delta$ be a fixed point of $j$. Then the restriction of $j$ to $V_{\kappa+2}^N$ is the an elementary embedding  $i:V^N_{\kappa+2}\to V^N_{\kappa+2}$ with crit$(i)\geq \delta$. Theorem \ref{elemofN} implies $i\in N$. This contradicts Theorem \ref{kun} within $N$.
\end{proof}

\begin{corollary}
Assume the $\mathrm{HOD}$ Conjecture. If $\delta$ is an extendible cardinal, then there is no
$j:\mathrm{HOD}\to \mathrm{HOD} \mbox{ with }\delta\leq{crit}(j) \mbox{ and }j\neq\mathrm{id}.$
\end{corollary}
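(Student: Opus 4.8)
The plan is to apply the preceding results as a black box, since this corollary is precisely the combination of Theorem \ref{noj} and Theorem \ref{equivalences}. First I would invoke Theorem \ref{equivalences}: under the hypothesis that $\delta$ is extendible, the HOD Conjecture (clause (1)) is equivalent to the statement that $\mathrm{HOD}$ is a weak extender model for $\delta$ supercompact (clause (2)). So assuming the HOD Conjecture lets me substitute $\mathrm{HOD}$ for the abstract model $N$ appearing in Theorem \ref{noj}.

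Next I would simply apply Theorem \ref{noj} with $N = \mathrm{HOD}$. That theorem asserts that whenever $N$ is a weak extender model for $\delta$ supercompact, there is no nontrivial elementary embedding $j : N \to N$ with $\delta \le \mathrm{crit}(j)$. Taking $N = \mathrm{HOD}$ yields exactly the desired conclusion: no $j : \mathrm{HOD} \to \mathrm{HOD}$ with $\delta \le \mathrm{crit}(j)$ and $j \neq \mathrm{id}$ can exist. The proof is therefore a two-line deduction chaining these two theorems together.

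There is essentially no obstacle at this level, since all the real work was done earlier. The one point I would be careful about is that Theorem \ref{equivalences} requires $\delta$ to be \emph{extendible} (not merely supercompact) to establish the equivalence, and the hypothesis of the corollary supplies exactly this. I would also note that Theorem \ref{noj} does not itself require extendibility of $\delta$ in its hypothesis—only that $N$ is a weak extender model for $\delta$ supercompact—so the extendibility assumption is consumed entirely in passing through Theorem \ref{equivalences} to verify that $\mathrm{HOD}$ is such a model. Thus the extendibility of $\delta$ plays its role precisely once, in converting the HOD Conjecture into the weak extender model property for $\mathrm{HOD}$, after which Theorem \ref{noj} finishes the argument.
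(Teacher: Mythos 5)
Your proposal is correct and is exactly the paper's proof: the paper likewise derives the corollary by using Theorem \ref{equivalences} to conclude that $\mathrm{HOD}$ is a weak extender model for $\delta$ supercompact and then applying Theorem \ref{noj} with $N = \mathrm{HOD}$. Your added observation that the extendibility hypothesis is consumed entirely in the passage through Theorem \ref{equivalences} is accurate and a nice clarification, but the argument is the same.
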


\begin{proof}
Follows from the Theorem \ref{noj} and Theorem \ref{equivalences}. 
\end{proof}

Finally we give and example $N$ of a weak extender model for $\delta$ supercompact  other than $V$. $N$ will be such that there is  and nontrivial elementary embedding $j:N\to N$, with $\crit(j)<\delta$. The point of the next example is that actually one can have a weak extender model for $\delta$ supercompact but it lacks structural properties, such as the ones $\mathrm{HOD}$ and $L$ possess. Note that this makes Theorem \ref{noj} actually optimal.

For the example we will use the following fact.

\begin{lemma}\label{restr}
Let $\kappa$ be a measurable cardinal, $\mu$ a measure on $\kappa$ and $j:V\to M$ the ultrapower map given by $\mu$. Also let $\nu$ be a $\delta$-complete measure, for some $\delta>\kappa$, $k: V\to N$ the ultrapower map given by $\nu$ and $l:M\to \mathrm{Ult}(M,j(\nu))$ the ultrapower map. Then $k\upharpoonright M=l$.
\end{lemma}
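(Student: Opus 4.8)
The lemma concerns a commutativity of ultrapower constructions: we have a measure $\mu$ on $\kappa$ giving $j : V \to M$, and a $\delta$-complete measure $\nu$ on some set (with $\delta > \kappa$) giving $k : V \to N$. Inside $M$ we form the ultrapower by $j(\nu)$ yielding $l : M \to \mathrm{Ult}(M, j(\nu))$, and we want $k \upharpoonright M = l$. The plan is to verify this by a direct computation on representatives of elements of $M$, using the fact that every element of $M$ has the form $j(f)(\kappa)$ for some function $f$, and every element of $\mathrm{Ult}(M, j(\nu))$ is represented by a function in $M$.

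**The main argument.** Let me think about what $\delta$-completeness buys us. The key technical point will be that because $\nu$ is $\delta$-complete and $\delta > \kappa = \mathrm{crit}(j)$, the measure $\nu$ concentrates on a set on which $j$ behaves simply. First I would fix notation: say $\nu$ is a measure on a set $A$, so $k = [\cdot]_\nu$ and for $x \in V$, $k(x)$ is the constant function. An arbitrary element of $M$ is $j(f)(\kappa)$. I would compute $k(j(f)(\kappa))$ and compare it with $l(j(f)(\kappa))$. The map $l$ is the ultrapower of $M$ by the measure $j(\nu)$ (which lives in $M$ since $\delta > \kappa$ guarantees $\nu \in V_\delta$ is moved correctly, and $j(\nu)$ is a measure in $M$ on $j(A)$). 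To evaluate $l$, I represent elements of $\mathrm{Ult}(M, j(\nu))$ by $M$-functions on $j(A)$. The crucial observation is that $k \upharpoonright M$ should itself be realizable as an ultrapower embedding of $M$: since $\delta$-completeness of $\nu$ with $\delta > \kappa$ means $\nu$ measures all subsets of $A$ in a way that interacts trivially with the $\kappa$-ultrapower, one shows $N = \mathrm{Ult}(M, j(\nu))$ by checking that the two ultrapowers compute the same equivalence classes.

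**Carrying out the identification.** Concretely, I would argue as follows. Since $\mathrm{crit}(j) = \kappa < \delta$ and $\nu$ is $\delta$-complete, $j \upharpoonright A$ and the structure of $\nu$ are unaffected by $j$ in the relevant sense: for a set $X \subseteq A$ we have $X \in \nu \iff j[X] \in j(\nu)$, and more usefully $X \in \nu \iff j(X) \in j(\nu)$ up to the measure-one set where $j$ acts as identity on the index set. I would then define, for $g : A \to V$ a function representing $[g]_\nu \in N$, a corresponding function $\hat g : j(A) \to M$ in $M$ representing an element of $\mathrm{Ult}(M, j(\nu))$, and show the correspondence $[g]_\nu \mapsto [\hat g]_{j(\nu)}$ is an isomorphism commuting with the embeddings from $M$. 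The verification reduces to \L o\'s's theorem applied on both sides together with the elementarity of $j$.

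**The main obstacle.** The genuinely delicate step is justifying that the index set $A$ is, from the point of view of $j$, essentially fixed — i.e. that $\delta$-completeness with $\delta > \kappa$ forces $\nu$ to concentrate on the range where $j$ and the ultrapower by $j(\nu)$ cohere. In the typical application $\nu$ is a measure on $\delta$ itself (or on some $\mathcal{P}_\delta(\gamma)$), and one uses that $j \upharpoonright \delta$ or the relevant restriction is captured correctly. The heart of the matter is checking that for $f \in V$, $l(j(f)(\kappa)) = k(j(f)(\kappa))$, which unwinds to comparing $[{\sigma \mapsto j(f)(\kappa)}]_{j(\nu)}$ in $M$ against $[{\sigma \mapsto f(\sigma)}]_\nu$ reinterpreted in $M$; I expect this to come down to a careful bookkeeping of which functions represent which classes, and the $\delta$-completeness is exactly what guarantees the $\kappa$-indexed data in the $M$-side matches the $V$-side. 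Once this representative-level identity is established for all $j(f)(\kappa)$, since such elements exhaust $M$, we conclude $k \upharpoonright M = l$.
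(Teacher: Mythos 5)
Your reduction to checking $k(j(f)(\kappa)) = l(j(f)(\kappa))$ for every $f$ is the right target, since elements of the form $j(f)(\kappa)$ exhaust $M$, but the proposal never actually establishes this identity, and the mechanism you gesture at is not the one that works. The claims that $\nu$ concentrates on a set on which ``$j$ acts as identity on the index set'' and that $X \in \nu \iff j[X] \in j(\nu)$ are, respectively, unjustified and ill-typed in general: $\nu$ is an arbitrary $\delta$-complete measure on an arbitrary index set, $j$ need not fix a $\nu$-measure-one subset of that set pointwise, and $j[X]$ need not even belong to $M$. Moreover, constructing a correspondence $[g]_\nu \mapsto [\hat g]_{j(\nu)}$ would at best exhibit an isomorphism of the two ultrapowers over $M$; the lemma asserts that the transitive collapses and the embeddings literally coincide, which is a stronger statement.

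The missing idea is a closure argument. Since $\nu$ is $\delta$-complete and $\delta > \kappa$, the ultrapower $N$ satisfies ${}^{\kappa}N \subseteq N$; hence $\mu \in N$, $\mathcal{P}(\kappa)^N = \mathcal{P}(\kappa)$, and every function from $\kappa$ into $N$ lies in $N$, so the ultrapower of $N$ by $\mu$ computed externally agrees with the one computed inside $N$, i.e.\ $j \upharpoonright N = j'$ where $j' : N \to \mathrm{Ult}(N,\mu)$ is $N$'s own ultrapower map. Coding $j$ and $k$ by restrictions to suitable rank initial segments so that they can be treated as elements, this says exactly that $k(j) = j \upharpoonright N$ (note $k(\mu)=\mu$ because $\mathrm{crit}(k) \geq \delta > \kappa$), while by definition $j(k) = l$. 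The desired identity is then a short computation by elementarity:
$$k\bigl(j(f)(\kappa)\bigr) = k(j)\bigl(k(f)\bigr)\bigl(k(\kappa)\bigr) = j\bigl(k(f)\bigr)(\kappa) = j(k)\bigl(j(f)\bigr)(\kappa) = l\bigl(j(f)\bigr)\bigl(l(\kappa)\bigr) = l\bigl(j(f)(\kappa)\bigr),$$
using $k(\kappa)=\kappa$ and $l(\kappa)=\kappa$ (as $\mathrm{crit}(l) = j(\delta) > \kappa$). Without the closure fact and the resulting identification $k(j) = j \upharpoonright N$, the ``bookkeeping'' your final paragraph defers to cannot be completed.
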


\begin{proof}
First, observe that as the critical point of $k$ is above $\kappa$, then $\mu\in N$. Apply $\mu$ to $N$ and let $j^{\prime}:N\to \mathrm{Ult}(N,\mu)$ be the ultrapower map. $^{\delta}N\subset N$ because $\nu$ is $\delta$-complete, so all functions from $\kappa$ to $N$ are in $N$, and this readily implies $j\upharpoonright N=j^{\prime}$.

Now, for $j(f)(\kappa)$ an element of $M$, we wish to see that $k(j(f)(\kappa))=l(j(f)(\kappa))$. For simplicity, $j$ for a restriction of $j$ to a suitable rank-initial segment which can then be treated as an element; likewise for $k$. By elementarity we have that $k(j(f)(\kappa)) = k(j)(k(f))(k(\kappa))$, but $k(j)$ is $j^{\prime}$ which is the restriction of $j$ to $N$ so,
\begin{align*}
    k(j(f)(\kappa)) &= j(k(f))(\kappa)\\
        &= j(k)(j(f))(\kappa)\\
        &= l(j(f))(\kappa)\\
        &= l(j(f))(l(\kappa))\\
        &= l(j(f)(\kappa))
\end{align*}
In other words, $k$ restricts to $l$ as desired.
\end{proof}

\begin{example}
Let $\delta$ be a supercompact cardinal. Then there is $N$ a weak extender model for $\delta$ supercompact , and a nontrivial $j:N\to N$ with $\crit(j)<\delta$.
\end{example}
Let $\kappa<\delta$ be a measurable cardinal and take $\mu$ a measure on $\kappa$. Let
$$V=M_0\to M_1\to M_2\to M_3 \to \cdots \to M_\omega$$ 
be the internal iteration of $V$ by $\mu$ of length $\omega$. So we have $M_0=V$, $\kappa_0=\kappa$; and inductively for naturals $n>0$ define  $\mu_n=i_{n-1,n}(\mu_{n-1})$, $\kappa_n=i_{n-1}(\kappa_{n-1})=$crit$(\mu_n)$ and let $i_{n,n+1}: M_n\to M_{n+1}$  be the map induced by taking the ultrapower of $M_n$ by $\mu_n$. $M_\omega$ is then the direct limit of the system and $i_{n,\omega}:M_n\to M_\omega$ the induced embeddings. $M_\omega$ is well founded and so we identify it with its transitive collapse (see Theorem 19.7 of  \cite{Jech} ).
Define $N=M_\omega$. 

Now, we show that $N$ is a weak extender model for $\delta$ supercompact. This is equivalent to showing that for unboundedly many $\gamma$ there is a $\gamma$-supercompactness measure that concentrates on $N$ and is amenable to $N$. Note that $i_{0,\omega}(\delta)=\delta$ and that for unboundedly many ordinals $\gamma$, we have that $i_{0,\omega}(\gamma)=\gamma$.  Fix such $\gamma$ and let $\mathcal{U}$ be a normal and fine measure on $\mathcal{P}_\delta(\gamma)$. We prove that $\mathcal{U}$ is a suitable measure for $N$.
Now, let $\mathcal{W}=i_{0,\omega}(\mathcal{U})$, and $\mathcal{W}_n=i_{0,n}(\mathcal{U})$ (observe that, for each $n$, $\mathcal{W}_n$ is a normal fine measure on $\mathcal{P}_\delta(\gamma)$ in $M_n$). By Lemma \ref{restr} the map induced by taking $\mathrm{Ult}(V, \mathcal{U})$ restricts to the one given by $\mathrm{Ult}(M_1, \mathcal{W})$. Inductively we have that if $k_n:M_n\to \mbox{Ult}(V,\mathcal{W}_n)$ is the ultrapower map, then $k_n=k_0\upharpoonright M_n$. If follows then that as $i_{n,\omega}(\mathcal{W}_n)=\mathcal{W}$ (for each $n$) and $N$ is the direct limit of the initial system, we have that $k=k_0\upharpoonright N$, where $k:N\to\mbox{Ult}(N,\mathcal{W})$ is the ultrapower map given by $\mathcal{W}$. Therefore for $A\in N$, $k[\gamma]\in k(A)$ iff $k_0[\gamma]=k_0(A)$, which readily implies $\mathcal{W}=\mathcal{U}\cap N$; in other words $\mathcal{U}$ is amenable to $N$.  Also, $\mathcal{U}$ concentrates on $N$ as $N\cap \mathcal{P}_\delta(\gamma)\in \mathcal{W}\subseteq \mathcal{U}$, as desired. Thus $N$ is a weak extender model for $\delta$ supercompact.

Finally, observe that if $j=i_{0,1}\upharpoonright N$ then $j:N\to N$ as $N$ is the $\omega$-th iterate, so we have a nontrivial embedding from $N$ to $N$, the key point here is that $\crit(j)<\delta$.

\section{Consequences of the HOD Conjecture}\label{consequences}

We conclude by summarising  without proof some results that would follow if the HOD Conjecture were proved to be a theorem of ZFC.

\begin{theorem} [ZF] \label{ACclose}
Assume that ZFC proves the HOD Conjecture. Suppose $\delta$ is an extendible cardinal.
Then there is a transitive class $M \subseteq V$ such that:
\begin{enumerate}
	\item $M\models \ZFC$
	\item $M$ is $\Sigma_2(a)$-definable for some $a \in V_{\delta}$
	\item Every set of ordinals is $<\delta$-generic over $M$
	\item $M \models$ ``$\delta$ is an extendible cardinal"
\end{enumerate}
\end{theorem}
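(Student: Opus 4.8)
The plan is to take $M = \HOD$ and verify the four clauses using the machinery already assembled. Clause (1) is the classical theorem, provable in ZF, that $\HOD \models \ZFC$. Everything else rests on a single structural fact: that $\HOD$ is a weak extender model for $\delta$ supercompact. I would obtain this essentially as in the proof of Theorem \ref{equivalences}, (1) implies (2), feeding in extendibility embeddings of $V$ (which are legitimate objects in ZF) to produce, for each large $\gamma$, a $\gamma$-supercompactness measure concentrating on and amenable to $\HOD$. The subtle point — and the step I expect to be the main obstacle — is that we work only in ZF, whereas the hypothesis merely asserts that $\ZFC$ \emph{proves} the $\HOD$ Conjecture. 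The hypothesis must therefore be deployed through $\HOD$ itself, which is a model of $\ZFC$; but "$\omega$-strongly measurable in $\HOD$" refers to genuine ($V$-)stationarity of the pieces of a partition, while $\HOD$ only knows its own notion of stationarity. Reconciling these two notions through the extendibility embeddings, so as to extract in $V$ the partition $\langle S_\alpha \mid \alpha < \gamma\rangle \in \HOD$ of $\cof(\omega)\cap\lambda$ into $V$-stationary sets that drives the argument, is the crux.

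For clause (2), once $\HOD$ is a weak extender model for $\delta$ supercompact, Lemma \ref{deltacoverwem} gives the $\delta$-covering property, and I would separately verify the $\delta$-approximation property for $\HOD$ from the amenable measures. Theorem \ref{hlw}(2) then yields that $\HOD$ is $\Sigma_2$-definable from $\HOD \cap \mathcal{P}(\delta)$. Since this parameter has rank about $\delta+2$, to bring it into $V_\delta$ I would reflect below $\delta$: using Magidor's characterisation (Theorem \ref{magidor}) together with extendibility, one finds $\bar\delta < \delta$ for which $\HOD$ is still a weak extender model for $\bar\delta$ supercompact, and hence is $\Sigma_2$-definable from $a = \HOD \cap \mathcal{P}(\bar\delta) \in V_\delta$.

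Clause (3) is where Vop\v{e}nka's theorem (Theorem \ref{vopenka}) enters: for every ordinal $\kappa$ there is $\mathbb{B}\in\HOD$ such that every $A\subseteq\kappa$ is $\HOD$-generic for $\mathbb{B}$ with $\HOD[A]\subseteq\HOD[G]$, so every set of ordinals is set-generic over $\HOD$. The refinement to "$<\delta$-generic" I would read off the $\delta$-covering and $\delta$-approximation properties, which force $V$ to sit $\delta$-closely above $\HOD$ in the manner underlying Corollary \ref{hodgen}; pinning down that these properties deliver exactly the intended sense of $<\delta$-genericity is a secondary technical point to be checked carefully.

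Finally, for clause (4), given $\eta > \delta$ I would choose an extendibility embedding $j : V_{\eta'+1}\to V_{\theta+1}$ in $V$ with $\eta' \ge \eta$, $\crit(j) = \delta$ and $j(\delta) > \eta'$, taking $\eta'$ large enough that the defining formula for $\HOD$ is absolute to $V_{\eta'}$, so that $(V_{\eta'+1})^{\HOD} = \HOD\cap V_{\eta'+1}$. Then $j$ restricts to an elementary map $(V_{\eta'+1})^{\HOD}\to(V_{\theta+1})^{\HOD}$ with the same critical point and $j(\delta) > \eta$, and by the capturing result Theorem \ref{elemofN} this restriction lies in $\HOD$. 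As $\eta$ was arbitrary, this witnesses that $\HOD \models$ "$\delta$ is extendible."
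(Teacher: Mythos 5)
The paper states Theorem \ref{ACclose} \emph{without proof} (Section \ref{consequences} explicitly says these results are only summarised, deferring to \cite{Woodin}, Theorem 229), so your argument has to stand on its own, and it does not: the two places you set aside as ``the crux'' and ``a secondary technical point'' are exactly where the content of the theorem lives. First, the transfer of the hypothesis into the choiceless setting. The hypothesis is only that $\ZFC$ \emph{proves} the $\HOD$ Conjecture; since $V\models\mathrm{ZF}$, you cannot simply assert the conjecture in $V$ and run Theorem \ref{equivalences} (1)$\Rightarrow$(2), because that proof is a ZFC argument throughout: it needs a $V$-regular $\lambda$ with a partition of $\cof(\omega)\cap\lambda$ into $V$-stationary sets lying in $\HOD$, and it leans on Solovay splitting (Theorem \ref{solovaysplitting}), the Solovay set and the remark after Theorem \ref{solovayset}, and ultrapowers of $V$ --- all of which use the Axiom of Choice in $V$. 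Applying the conjecture inside $\HOD$ (which does model $\ZFC$) only yields $\HOD$-stationarity at $\HOD$-regular cardinals, and in ZF these notions can diverge drastically from the $V$-notions (under determinacy-like hypotheses the club filter can be an ultrafilter and regular cardinals can all be measurable). Naming this reconciliation as the crux is accurate, but no mechanism for it is offered, so the claim that $\HOD$ is a weak extender model for $\delta$ supercompact --- on which clauses (2), (3) and (4) of your outline all depend --- is not established.

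Second, clause (3) does not follow from $\delta$-covering and $\delta$-approximation plus Vop\v{e}nka. Vop\v{e}nka's algebra for subsets of $\gamma$ has size on the order of $\left|\mathcal{P}(\mathcal{P}(\gamma))\cap\OD\right|$, far above $\delta$ for large $\gamma$, and Corollary \ref{hodgen} never shrinks it; moreover the implication ``$\delta$-covering and $\delta$-approximation imply every set of ordinals is ${<}\delta$-generic'' is simply false --- for $\delta$ inaccessible in $M$, the model $M$ has both properties in $M[G]$ where $G$ is generic for adding $\delta$ Cohen reals, yet the set of ordinals coding $G$ is not generic over $M$ for any poset of size less than $\delta$ (a counting-of-nice-names argument). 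The real proof must use the extendibility of $\delta$ to reflect the Vop\v{e}nka algebra down into $V_\delta\cap M$, and nothing in your outline produces that reflection. Smaller but genuine issues: the $\delta$-approximation property for weak extender models is asserted but not proved anywhere in this paper; the restriction of an extendibility embedding $j:V_{\eta+1}\to V_{j(\eta)+1}$ to $\HOD$ lands in $\HOD^{V_{j(\eta)}}$, which the proof of Theorem \ref{equivalences} only shows to be a \emph{subclass} of $\HOD\cap V_{j(\eta)}$, so the elementarity needed for clause (4) and for invoking Theorem \ref{elemofN} (itself proved under ZFC and contingent on the unestablished weak extender model property) is not automatic; and the fact that the statement allows a parameter $a\in V_\delta$, when plain $\HOD$ is already $\Sigma_2$-definable with no parameter, is a strong hint that the intended $M$ is not $\HOD$ itself but a relativised model chosen to make these reflections go through.
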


The conclusion of the theorem is that there is an inner model M which is both close to V and in which the Axiom of Choice holds. (See \cite{Woodin} Theorem 229 for proof of a stronger result.) This is close to ``proving'' the Axiom of Choice from large cardinal axioms and suggests the following conjecture. 

\begin{definition}
The {\em Axiom of Choice Conjecture} asserts in ZF, that if $\delta$ is an extendible cardinal then the Axiom of Choice holds in $V[G]$, where $G$ is $V$-generic for collapsing $V_{\delta}$ to be countable.
\end{definition}

One application of Theorem \ref{ACclose} is the following theorem. (See \cite{Woodin} Theorem 228 for a proof.)

\begin{theorem}[ZF]
Assume that $\ZFC$ proves the $\HOD$ Conjecture. Suppose $\delta$ is an extendible cardinal. Then for all $\lambda > \delta$ there is no non-trivial elementary embedding $j:V_{\lambda+2} \to V_{\lambda+2}$.
\end{theorem}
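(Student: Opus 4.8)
The plan is to argue by contradiction in ZF and transport a hypothetical embedding into a model of ZFC, where Kunen's theorem (Theorem~\ref{kun}) is available. So suppose $j:V_{\lambda+2}\to V_{\lambda+2}$ is nontrivial with $\lambda>\delta$; note that $j(\lambda)=\lambda$, so $\lambda$ is a limit fixed point of $j$, and set $\kappa=\crit(j)<\lambda$. Since ZFC proves the HOD Conjecture and $\delta$ is extendible, Theorem~\ref{ACclose} furnishes a transitive inner model $M$ with $M\models\ZFC$, definable by a $\Sigma_2$ formula with a parameter $a\in V_\delta$, in which $\delta$ is extendible, and such that every set of ordinals is $<\delta$-generic over $M$. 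Because $M$ is close to $V$ in this sense, it also has the $\delta$-covering and $\delta$-approximation properties and computes $\delta^+$ correctly, so the machinery of Theorem~\ref{hlw} applies to it.

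Assume first that $\crit(j)>\delta$ (the remaining case is discussed below). Then $j$ fixes $\delta$, $\delta^+$ and every element of $A:=\mathcal{P}(\delta)\cap M$, all of which lie in $V_{\crit(j)}$. Since $j\upharpoonright V_\lambda$ is a nontrivial self-embedding, $V_\lambda\models\ZFC$, so the covering/approximation analysis makes sense at these ranks. By Theorem~\ref{hlw}, membership in $M$ below rank $\lambda+2$ is pinned down over $V_{\lambda+2}$ from the parameter $A$: the set $m=M\cap V_{\lambda+1}$, which is an element of $V_{\lambda+2}$, is the \emph{unique} $X\subseteq V_{\lambda+1}$ modelling enough of ZFC with $\delta$-covering and $\delta$-approximation, correct $\delta^+$, and $X\cap\mathcal{P}(\delta)=A$; and, granted $m$, the approximation argument of Cases~1--3 expresses $M\cap V_{\lambda+2}$ over $V_{\lambda+2}$ as well. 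As every parameter here is fixed by $j$, elementarity gives $j(m)=m$ and, similarly, $j$ preserves $M\cap V_{\lambda+2}$. Hence $\bar\jmath:=j\upharpoonright V^M_{\lambda+2}$ is a nontrivial elementary embedding of $(V^M_{\lambda+2},\in)$ into itself.

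Next I would place $\bar\jmath$ inside a model of ZFC. Coding $\bar\jmath$ as a set of ordinals and using that every set of ordinals is $<\delta$-generic over $M$, we obtain $\bar\jmath\in M[G]$ for some $M$-generic $G$ on a poset $\mathbb{P}\in V^M_\delta$, where $M[G]\models\ZFC$. Since $\crit(\bar\jmath)=\kappa>\delta>|\mathbb{P}|^M$, the embedding $\bar\jmath$ fixes $\mathbb{P}$ and $G$ pointwise, so the standard small-forcing lifting (the Lévy--Solovay argument cited after Corollary~\ref{hodgen}) extends it to $\bar\jmath^{+}:V^{M[G]}_{\lambda+2}\to V^{M[G]}_{\lambda+2}$ via $\bar\jmath^{+}(\tau_G)=\bar\jmath(\tau)_G$. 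This $\bar\jmath^{+}$ is nontrivial, elementary, and an element of $M[G]$, contradicting Kunen's theorem (Theorem~\ref{kun}) applied within $M[G]$.

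The main obstacle is the transfer step: the bare $\Sigma_2$-definability of $M$ over $V$ does not make $M\cap V_\lambda$ definable over $V_{\lambda+2}$, so one cannot naively argue that $j$ preserves $M$. The workable route is the uniqueness clause of Theorem~\ref{hlw}, which characterises $m=M\cap V_{\lambda+1}$ as the unique object of $V_{\lambda+2}$ satisfying a formula with the $j$-fixed parameter $A$; this is exactly why the extra rank in $V_{\lambda+2}$ is needed, since we must quantify over subsets of $V_{\lambda+1}$ and exploit that $V_\lambda\models\ZFC$. A secondary obstacle is the critical-point bookkeeping: one must rule out or separately handle $\crit(j)\le\delta$, presumably by using the extendibility of $\delta$ to reflect a hypothetical low-critical-point embedding, before $A$ can be assumed fixed and the argument above engaged.
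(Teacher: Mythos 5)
The paper does not actually prove this theorem: it is stated in Section \ref{consequences} among results ``summarised without proof,'' with a pointer to \cite{Woodin} Theorem 228. So there is no in-paper argument to match against, and your proposal has to stand on its own. As it stands it has two genuine gaps. The first is the case $\crit(j)\le\delta$, which you explicitly defer. This is not a corner case to be cleaned up later; it is where your strategy structurally fails. Your entire transfer mechanism rests on $j$ fixing the parameter $a\in V_\delta$ from Theorem \ref{ACclose} and the set $A=\mathcal{P}(\delta)\cap M$, and if $\crit(j)\le\delta$ none of that is available: $j$ may move $a$, $A$, and hence (for all you have shown) $M$ itself. The suggestion to ``use extendibility to reflect a low-critical-point embedding'' does not obviously produce a reduction to the high-critical-point case --- reflecting $j$ below $\delta$ just produces another embedding with small critical point, and restricting $j$ to $V_{\mu+2}$ for $\mu=\sup_n j^n(\crit(j))$ may land below $\delta$, outside the scope of the theorem. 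Some genuinely different idea is needed here (the known arguments route through the singular cardinal $\sup_n j^n(\crit(j))$ and the correctness of a choiceful inner model about successors of singulars, rather than through ``$j$ fixes $M$'').

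The second gap is the wholesale import of Section \ref{absolute} machinery into a ZF universe. The claim that ``$j\upharpoonright V_\lambda$ is a nontrivial self-embedding, so $V_\lambda\models\ZFC$'' is simply false: the ambient theory is ZF, a self-embedding gives you no choice, and indeed the whole interest of the theorem is that Kunen's argument is unavailable in $V$. Theorem \ref{ACclose} does not assert that $M$ has the $\delta$-covering and $\delta$-approximation properties in $V$; you would have to derive this from the genericity clause, and even formulating covering requires care when $V$ lacks choice. More seriously, the uniqueness argument of Theorem \ref{hlw}, which is the crux of your claim that $m=M\cap V_{\lambda+1}$ is definable over $V_{\lambda+2}$ from the $j$-fixed parameter $A$, is proved in the paper using Downward L\"owenheim--Skolem, cardinality computations, and the coding of levels of the cumulative hierarchy by well-orderings --- all of which presuppose AC in the ambient model. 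So even in the case $\crit(j)>\delta$ the step ``$j(m)=m$'' is not justified by what you cite. The endgame --- coding $\bar\jmath$ by a set of ordinals via a well-ordering inside $M$, absorbing it into a small generic extension $M[G]\models\ZFC$, lifting by the L\'evy--Solovay argument, and contradicting Theorem \ref{kun} in $M[G]$ --- is a sensible skeleton, but the two issues above have to be resolved before it can be reached.
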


Thus (assuming that $\ZFC$ proves the $\HOD$ Conjecture) one nearly has a proof of Kunen's Theorem (\ref{kun}) without using the Axiom of Choice. 

For our final theorem we need a new definition. $L(\mathcal{P}(OR))$ is built in the same way as the usual $L$-hierarchy but allowing the use of all sets of ordinals in definitions. So it is the least model of ZF that contains all sets of ordinals. Note that, under ZF, this is not necessarily the whole of $V$.

\begin{theorem}[ZF]
Assume that ZFC proves the HOD Conjecture. Suppose that $\delta$ is an extendible cardinal.
Then in $L(\mathcal{P}(OR))$:
\begin{enumerate}
	\item $\delta$ is an extendible cardinal.
	\item The Axiom of Choice Conjecture holds.
\end{enumerate}
\end{theorem}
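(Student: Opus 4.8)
The plan is to regard $W:=L(\mathcal{P}(OR))$ as a ZF universe in its own right and to transport the earlier results into it. First note that $V$ and $W$ have the same sets of ordinals, so $\mathcal{P}(OR)^{W}=\mathcal{P}(OR)^{V}$ and hence $L(\mathcal{P}(OR))^{W}=W$; thus $W$ is idempotent for the $L(\mathcal{P}(OR))$ operation. The meta-hypothesis that $\ZFC$ proves the $\HOD$ Conjecture is a statement about formal provability and so remains in force when we argue inside $W$; since $W\models\ZF$, once $\delta$ is known to be extendible in $W$ we will be free to apply Theorem \ref{ACclose} relativised to $W$.

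For (1) I would show that the extendibility embeddings of $V$ restrict to $W$. The construction of $W$ stabilises cofinally often, exactly as for $L(\mathbb{R})$: there is a cofinal class of $\eta$ for which $V^{W}_{\eta+1}$ is uniformly definable over $V^{V}_{\eta+1}$. Given an arbitrary $\eta>\delta$, pass to such a closure point above it and, using extendibility in $V$, fix an elementary $j:V^{V}_{\eta+1}\to V^{V}_{\theta+1}$ with $\crit(j)=\delta$ and $j(\delta)>\eta$. Since $j$ is elementary it carries the definable class $V^{W}_{\eta+1}$ onto $V^{W}_{j(\eta)+1}$, and because satisfaction for these set-sized structures is itself definable over $V^{V}_{\eta+1}$, the restriction $j\upharpoonright V^{W}_{\eta+1}\colon V^{W}_{\eta+1}\to V^{W}_{j(\eta)+1}$ is elementary for $W$. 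Its critical point is still $\delta$ and $j(\delta)>\eta$, so $\delta$ is extendible in $W$. Notice this argument uses only the extendibility of $\delta$ in $V$, not the $\HOD$ Conjecture. The delicate point, where I would concentrate, is verifying the elementarity of the restriction, i.e. isolating the cofinal class of $\eta$ for which the construction of $W$ up to rank $\eta$ is absolute to $V^{V}_{\eta+1}$.

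For (2) I would work inside $W$ and force. By (1), $\delta$ is extendible in $W$, so Theorem \ref{ACclose} applies there and yields a transitive $M\subseteq W$ with $M\models\ZFC$, $M$ being $\Sigma_{2}(a)$-definable for some $a\in V^{W}_{\delta}$, $M\models$``$\delta$ is extendible'', and such that every set of ordinals of $W$ is $<\delta$-generic over $M$ by posets lying in $V^{M}_{\delta}\subseteq V^{W}_{\delta}$. Let $G$ be $W$-generic for $\mathrm{Coll}(\omega,V^{W}_{\delta})$; the goal is $W[G]\models\ZFC$. Since a set is well-orderable as soon as every power set $\mathcal{P}(\kappa)$ below it is, it suffices to well-order $\mathcal{P}(\kappa)^{W[G]}$ for each $\kappa$. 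Because $G$ makes $V^{W}_{\delta}$ countable, the poset $\mathrm{Coll}(\omega,V^{W}_{\delta})$ is countable in $W[G]$, so a nice name for a subset of $\kappa$ is coded in $W[G]$ by a $\kappa$-sequence of reals. Hence $\mathcal{P}(\kappa)^{W[G]}$ is a surjective image of ${}^{\kappa}(\mathbb{R}^{W[G]})$, and the whole problem reduces to well-ordering the reals of $W[G]$.

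Well-ordering $\mathbb{R}^{W[G]}$ is the main obstacle, and it is where $M$ must do genuine work rather than bookkeeping, since coding a real by a name merely produces another real. The idea is that after the collapse both $\mathrm{Coll}(\omega,V^{W}_{\delta})$ and every small poset of $V^{M}_{\delta}$ witnessing genericity over $M$ have become countable, so each real of $W[G]$ arises as an $M$-generic object over a countable, weakly homogeneous, $\Sigma_{2}(a)$-definable forcing whose parameter $a$ now lies in the well-ordered part of $W[G]$. Exploiting the homogeneity and definability of $M$ together with the universality of the Levy collapse, which absorbs all these small forcings, one defines in $W[G]$ a well-ordering of precisely the generic objects that occur, and thereby of $\mathbb{R}^{W[G]}$; this is the content of \cite{Woodin} underlying Theorem \ref{ACclose}, now read inside $W$. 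With $W[G]\models\ZFC$ established for the collapse of $V^{W}_{\delta}$, and $\delta$ extendible in $W$ by (1), the Axiom of Choice Conjecture holds in $W$, completing the proof.
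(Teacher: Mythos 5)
The paper states this theorem without proof (Section \ref{consequences} explicitly presents its results ``without proof''), so there is no argument of the authors to compare against line by line; judged on its own, your proposal for part (1) has a genuine gap, and it sits exactly at the central difficulty of this circle of results. You claim $\delta$ is extendible in $W=L(\mathcal{P}(OR))$ using only extendibility in $V$, by restricting $j:V_{\eta+1}\to V_{\theta+1}$ to $W\cap V_{\eta+1}$ at ``closure points'' $\eta$. Two things go wrong. First, $W\cap V_{\eta+1}$ is not definable over $V_{\eta+1}$, even for cofinally many $\eta$: a set of rank at most $\eta$ (say, a set of reals) may enter the $L(\mathcal{P}(OR))$ hierarchy only when it is constructed from a set of ordinals of arbitrarily large rank, so the sections of $W$ along the $V$-rank hierarchy never stabilise locally. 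The analogy with $L(\mathbb{R})$ fails for the same reason: what stabilises there are the constructibility levels $L_\alpha(\mathbb{R})$, not the rank levels; already $L(\mathbb{R})\cap V_{\omega+2}$ comprises all sets of reals of $L(\mathbb{R})$ and requires the entire hierarchy. Second, even granting that the restriction is elementary as a map in $V$, for $\delta$ to be extendible \emph{in} $W$ the witnessing embedding must be an element of $W$. Every element of $W$ is constructible from sets of ordinals, so to place $j\upharpoonright (W\cap V_{\eta+1})$ into $W$ you would need to code its domain by a set of ordinals, i.e.\ well-order $W\cap V_{\eta+1}$ inside $W$ --- precisely the choice-like information whose possible absence is the whole problem. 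This is why the theorem carries the hypothesis that ZFC proves the HOD Conjecture: the intended route to (1) runs through Theorem \ref{ACclose}, whose $\Sigma_2(a)$-definable ZFC model $M$, with $\delta$ extendible in $M$ and every set of ordinals $<\delta$-generic over $M$, supplies the missing codes, since $W$ is then absorbed into small generic extensions of $M$ and extendibility is transferred through those.

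Your part (2) has the right general shape --- apply Theorem \ref{ACclose} inside $W$ and exploit the genericity of all sets of ordinals over $M$ together with the collapse of $V_\delta$ --- but it inherits the gap in (1), and the reduction to well-ordering $\mathbb{R}^{W[G]}$ is not free in ZF: the nice-name argument uses choice to produce the antichains, and in ZF a well-ordering of $\mathbb{R}$ does not by itself yield a well-ordering of ${}^{\kappa}\mathbb{R}$ or of $\mathcal{P}(\kappa)$ (function spaces into a well-orderable set need not be well-orderable without choice). The correct bookkeeping again runs through $M$: every set of ordinals of $W$ lies in $M[g]$ for some $g$ generic for a poset in $V_\delta^{M}$, these small forcings are absorbed after the collapse into a single collapse extension of the ZFC model $M$, and the well-orderings are imported from there rather than manufactured from nice names over the choiceless model $W$.
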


\bibliographystyle{plain}

\end{document}